\newtheorem{theorem}{Theorem}[section]
\newtheorem{lemma}[theorem]{Lemma}
\newtheorem{corollary}[theorem]{Corollary}
\theoremstyle{definition}
\newtheorem{definition}[theorem]{Definition}
\theoremstyle{remark}
\newtheorem{remark}[theorem]{Remark}
\theoremstyle{definition}\newtheorem{assumption}{Assumption}[section]
\numberwithin{equation}{section}
\def\h{\mathcal H}
\def\b{\mathcal B}
\def\cald {\mathcal D}
\def\a{\mathcal A}
\def\E{\mathcal E}
\def\d{\mathrm d}
\def\e{\epsilon}
\def\r{\mathbb R}
\begin{document}


\title{\bfseries Exponential attractor for the  viscoelastic wave model  with time-dependent memory kernels
  \footnote{Supported by National
Natural Science Foundation of China (No.11671367). *Corresponding author: Zhijian Yang, e-mail: liyn@hrbeu.edu.cn (Y. Li), yzjzzut@tom.com (Z. Yang)} }

\author[~]{Yanan Li$^1$, \ \ Zhijian Yang$^{2, *}$\\
${}^1$ College of Mathematical Sciences, Harbin Engineering University, 150001, China\\
${}^2$ School of Mathematics and Statistics, Zhengzhou
University,  450001,   China}

\date{}
\maketitle \thispagestyle{empty} \setcounter{page}{1}

\begin{abstract} The paper is concerned with  the   exponential attractors for the   viscoelastic wave model  in $\Omega\subset \mathbb R^3$:
\begin{align*}
 u_{tt}-h_t(0)\Delta u-\int_0^\infty\partial_sh_t(s)\Delta u(t-s)\mathrm ds+f(u)=h,
\end{align*}
 with time-dependent memory kernel  $h_t(\cdot)$  which is used to model  aging phenomena of the material. Conti et al     \cite{Pata1,Pata2} recently provided the  correct mathematical setting for  the model  and  a well-posedness result within the novel theory of dynamical systems acting on
time-dependent spaces, recently established by Conti, Pata and Temam   \cite{Pata}, and proved the existence and the regularity of the time-dependent
global attractor.  In this work, we further   study the   existence of the time-dependent  exponential attractors as well as their  regularity. We    establish  an abstract  existence criterion via  quasi-stability method  introduced  originally  by Chueshov and Lasiecka \cite{Chueshov2004},  and on the basis of the theory and technique developed in  \cite{Pata1,Pata2}  we further provide a new method to  overcome the difficulty of the lack of further regularity  to show  the existence of the time-dependent  exponential attractor. And these techniques can be used to tackle  other hyperbolic models.
\end{abstract}

\vspace{.08in} \noindent \textbf{Keywords}: Viscoelastic wave model; time-dependent memory kernel; exponential attractors;     time-dependent phase spaces; longtime behavior of solutions.

\vspace{.08in}  \noindent \textbf{2020 Mathematics Subject Classification:} 37L30, 37L45, 35B40, 35B41, 35L10.

\section{Introduction}

In this paper, we investigate  the existence of the exponential attractors for the  following viscoelastic wave model  with  time-dependent memory kernel
\begin{align}
 u_{tt}-h_t(0)\Delta u-\int_0^\infty\partial_sh_t(s)\Delta u(t-s)\mathrm ds+f(u)=h  \ \ \hbox{in} \ \ \Omega\times (\tau,+\infty),\label{a1}
\end{align}
where $\Omega\subset \mathbb R^3$ is a bounded domian  with the smooth boundary $\partial \Omega$, the time-dependent function
\begin{align}\label{a2}
h_t(s)=k_t(s)+k_\infty,\ \ k_\infty>0,  \ \ u(\tau-s)=\phi_\tau(s),  \  s>0,
\end{align}
and where   $k_t(\cdot)$ is convex and summable for every fixed $t, \phi_\tau(s)$ is an assigned functions, together with the boundary and initial conditions
\begin{align} \label{a3}
u|_{\partial\Omega}=0,\ \ (u(\tau), u_t(\tau))=(u_\tau,v_\tau).
\end{align}
Model \eqref{a1} arising from the theory of viscoelasticity was proposed by Conti, Danese, Giorgi and Pata \cite{Pata1} to   describe the dynamics of aging materials because   the memory kernel $h_t(\cdot)$ depends on time, and  this feature allows to describe viscoelastic materials whose structural properties evolve over time, say,  materials that undergo an aging
process   which can be reasonably depicted as a loss of the elastic response (for more details, one can  see \cite{Pata1-3,Pata1-17, Pata1-28} and references therein).
 This translates into the study
of dynamical systems acting on time-dependent spaces, according to the newly established theory by
Conti, Pata and Temam\cite{Pata}, whose inspiration is on the basis of \cite{Chepyzhov, Di}.

The
presence of a time-dependent kernel introduces essential difficulties in the analysis. When the memory kernels are independent $t$, the classical method introduced by Dafermos \cite{Pata1-8,Pata1-9}  is adding a new variable $\eta$ which is generated by the right-translation semigroup acting on the history space and satisfies a specific differential equation. Under this circumstance, there exist extensive   researches on the well-posedness and the longtime   behavior of Eq. \eqref{a1}, with $h_t(s)\equiv h(s)$, see \cite{Danese, Pata1-8, Pata1-16, Pata1-19, Pata1-21, Pata1-22, Pata1-24} and references therein.
However, this approach become useless for the models with  time-dependent memory kernel because the phase spaces for the past history are time-dependent, which   causes some
problems even in the definition of the time derivative $\partial_t\eta$.  Hence it is necessary to give a new definition and  construct  some new estimates about variable $\eta$.

Conti,  Danese,  Giorgi and  Pata \cite{Pata1} introduce the time-dependent memory kernel
\begin{align*}
\mu_t(s):=-\partial_sk_t(s)=-\partial_s h_t(s),
\end{align*}
and for simplicity let $k_\infty=1, \eta_\tau(s)=u_\tau-\phi_\tau(s), s\in \r^+$. Then a simple calculation shows that problem  \eqref{a1}-\eqref{a3} reads
\begin{equation}
 \partial_{tt}u+A u+\int_0^\infty\mu_t(s)A\eta^t(s)\mathrm ds+f(u)=h,  \label{1.1}
\end{equation}
where  $A$ is the Laplacian with the Dirichlet boundary condition, with domain  $D(A)=H^2(\Omega)\cap H_0^1(\Omega)$, and
\begin{align}
\eta^t(s)=\begin{cases}u(t)-u(t-s),  s\leq t-\tau, \\
\eta_\tau(s-t+\tau)+u(t)-u_\tau, s>t-\tau,\label{1.2}
\end{cases}
\end{align}
with the initial condition
\begin{align}\label{1.3}
(u(\tau),\partial_t u(\tau), \eta^\tau)=(u_\tau,v_\tau,\eta_\tau).
\end{align}
By proposing a different notion of weak solution where the  supplementary
differential equation ruling the evolution of $\eta$ is replaced by \eqref{1.2}, and establishing  a family of integral inequalities rather than differential ones as before,
 Conti,  Danese,  Giorgi and  Pata \cite{Pata1}  first provided a global   well-posedness result for problem \eqref{1.1}-\eqref{1.3}. Then  Conti,  Giorgi and  Pata \cite{Pata2} further focused on the asymptotic behavior of the weak solutions and proved the existence and the regularity of the    time-dependent global attractor.  The authors \cite{Pata1,Pata2} developed the theory, along with the techniques  in their  works, and open the way
to the longterm analysis of the solutions for the related model with  time-dependent memory kernel.

 We mention that    when the memory kernel $h_t (s)\equiv h(s)$  (independent of $t$), Danese, Geredeli and Pata \cite{Danese} have proved the existence of exponential attractors for Eq. \eqref{a1} by using the abstract criterion given in their paper.  While the concept of exponential attractors was firstly introduced by Eden et al   \cite{Eden} in the Hilbert space (and later  in  the Banach space (cf. \cite{Efendiev1})), which have the advantage of being more stable than global attractors because they have finite fractal dimension  and attract  trajectories  at an exponential rate (cf. \cite{Efendiev2, Miranville} for a detail discussion).

However, to the best of the authors'  knowledge, there are no   results  on the  exponential attractors for viscoelastic wave  model \eqref{a1} (or \eqref{1.1}) because of  the absence of the exponential attractor theory in the time-dependent phase spaces  and the technical difficulties arising from  this kind of hyperbolic problem.

  The purpose of  this paper is to probe this question, and the motivation of this research comes from literatures \cite{Pata1,Pata2}. For convenience, we use the same    terminology used in \cite{Pata1,Pata2}. The main strategies can be summarized as follows:

  (i)  We first give  a proper notion  of the time-dependent exponential attractors for the dynamical process acting  on the time-dependent phase spaces, and  provide an abstract    existence criterion via  quasi-stability method  introduced  originally  by Chueshov and Lasiecka \cite{Chueshov2004, Chueshov2008, Chueshov2015}. This  criterion can be seen as an extension of  that in \cite{Y-Ly}, which provided an abstract result for the existence of the pullback exponential attractors.

  (ii)  We provide a new method to construct a special attracting family (rather than usual absorbing family) with higher regularity and forward invariance, and based on them to apply the abstract criterion  to problem \eqref{1.1}-\eqref{1.3} to prove the existence  and the regularity of the desired time-dependent exponential attractors.

  It is worth mentioning that  the existence of the time-dependent exponential attractor implies that the fractal dimension of the  sections of time-dependent global attractor given by \cite{Pata2} are uniformly bounded, and the application of the  abstract criterion is challenging because of
 the hyperbolicity    of   Eq. \eqref{a1} (or \eqref{1.1}), which leads to   non-further regularity of the solutions.

    The main contributions of the current paper are that we provide a new method based on   the compact attracting family to overcome the difficulty of the lack of further regularity, and to apply the  abstract criterion established in this paper   (See Theorem \ref{ea} as well as Corollary \ref{ea1} and Corollary \ref{ea2}) to
        establish the existence of the  time-dependent exponential attractor of problem \eqref{1.1}-\eqref{1.3} (see Theorem \ref{eaexist}). And this technique can be exported to tackle   other hyperbolic models.

The paper is organized as follows. In Section 2, we give the definition of the time-dependent exponential attractors and discuss their  existence criterion at an  abstract level. In Section 3,  we quote the assumptions which are same with those in \cite{Pata1,Pata2}, and state the main theorem of the paper.  In Section 4, we   first quote some known results   coming from literature \cite{Pata1,Pata2}, then based on them  we further establish some new estimates which will play key roles for our proving the main theorem.  In Section 5, we give the proof of  the main theorem.

\section{Time-dependent exponential attractors}
In this section, we first quote  some notions of the time-dependent global attractor   and the related results (cf. \cite{Pata, Pata2}), then give the definition of the time-dependent exponential attractor   as well as an abstract crirerion on its existence.

\begin{definition}  A two-parameter family of operators $\{U(t,\tau):X_\tau\rightarrow X_t\ |\ t\geq \tau, \tau\in\mathbb{R} \}$ is called  a process acting on time-dependent Banach spaces $\{X_t\}_{t\in\r}$  if (i) $U(\tau, \tau)$ is the identity map on $X_\tau$; (ii)
 $U(t,s)U(s,\tau)=U(t,\tau)$ for  all $t\geq s\geq \tau$.
\end{definition}

Let $U(t,\tau)$ be a process acting on time-dependent Banach spaces $\{X_t\}_{t\in\r}$.

\begin{definition}\label{26}  A family  $\b=\{B(t)\}_{t\in\r}$    is called a uniformly time-dependent absorbing set of the process  $U(t, \tau)$ if it is uniformly bounded, i.e.,
\begin{equation*}
 \sup_{t\in\r}\|B(t)\|_{X_t}:=\sup_{t\in\r}\sup_{\xi\in B(t)}\|\xi\|_{X_t}<+\infty,
\end{equation*}
and  for every $R>0$, there exists a    $\tau_e=\tau_e(R)\geq 0$ such that
\begin{equation*}
 U(t, \tau)\mathbb B_\tau(R)\subset B(t)\ \ \hbox{as}\ \ t-\tau \geq \tau_e,
\end{equation*}
where and in the following $\mathbb{B}_\tau(R)=\{\xi\in X_\tau| \|\xi\|_{X_\tau}\leq R\}$.
 \end{definition}

\begin{definition}\cite{Pata} A family  $\a=\{A(t)\}_{t\in\r}$    is called the time-dependent global attractor   of the process  $U(t, \tau)$  if
\begin{description}
  \item (i)   $A(t)$ is  compact   in $X_t$ for each $t\in\r$;
  \item (ii) $\a$ is pullback attracting, i.e., $\a$ is uniformly bounded and for every  uniformly bounded  family  $\cald=\{D(t)\}_{t\in\r}$,
     \begin{equation*}
     \lim_{\tau\rightarrow -\infty} \mathrm{dist}_{X_t}\left(U(t,\tau)D(\tau), A(t)\right)=0, \ \ \forall t\in\r,
         \end{equation*}
         where
      \begin{equation*}
      \mathrm{dist}_{X_t}\left(A, B\right)=\sup_{x\in A}\inf_{y\in B}\|x-y\|_{X_t}
      \end{equation*}
 is the Hausdorff semidistance of the  nonempty sets  $A, B\subset X_t$.
         \item (iii)  $\a$ is the smallest family with above mentioned properties (i) and (ii), i.e.,  if a family $\a_1=\{A_1(t)\}_{t\in\r}$ is of   properties (i) and (ii), then $A(t)\subset A_1(t)$ for all  $t\in \r$.
\end{description}
\end{definition}

Now, we define  the time-dependent exponential attractor, which is a generalization of the concept of the  pullback exponential attractor, and give its existence criterion.

\begin{definition} A uniformly bounded family $\E=\{E(t)\}_{t\in\r}$   is called a time-dependent exponential attractor    of the process  $U(t, \tau)$ if
\begin{description}
  \item (i)  $E(t)$ is  compact in $X_t$ for each $t\in\r$, and its fractal dimension in $X_t$  is uniformly bounded, i.e.,
    \begin{equation*}
      \sup_{t\in\r} \dim_f\left( E(t); X_t\right) <+\infty.
    \end{equation*}
  \item (ii)  $\E$ is semi-invariant, i.e., $U(t, \tau) E(\tau)\subset E(t)$ for all $t\geq \tau$.
  \item (iii)  There exists a positive constant $\beta$ such that for  every   uniformly bounded family $\cald=\{D(t)\}_{t\in\r}$
        \begin{equation*}
      \mathrm{dist}_{X_t}\left(U(t, t-\tau)D(t-\tau), E(t)\right)\leq C(\cald) e^{-\beta \tau}, \ \  \forall \tau\geq \tau(\cald), \ t\in \r,
         \end{equation*}
where $C(\cald), \tau(\cald)$ are positive constants depending only on $\cald$.
\end{description}
 \end{definition}
\medskip

\begin{theorem}\label{ea} Let $U(t,\tau)$ be a process acting on time-dependent Banach spaces $\{X_t\}_{t\in\r}$.
Assume that there exist a uniformly bounded family $\b=\{B(t)\}_{t\in\r}$ and a positive constant $T$ such that
\begin{description}
  \item[$(H_1)$] $B(t)$ is closed in $X_t$ for each  $t\in\r$, and
      \begin{equation}\label{3.1}
       U(t, t-\tau)B(t-\tau)\subset B(t), \ \ \forall \tau\geq T;
       \end{equation}
  \item[$(H_2)$] there exists a uniform Lipschitz constant $L_1>0$ such that
     \begin{equation}\label{3.2}
     \|U(t, t-\tau)x-U(t, t-\tau)y\|_{X_t}\leq L_1 \|x-y\|_{X_{t-\tau}}
     \end{equation}
    for all $x, y\in B(t-\tau)$, $\tau\in [0, T]$ and $t\in\r$;
  \item[$(H_3)$] there exist a Banach space $Z$ with compact seminorm $n_Z(\cdot)$, and a mapping $K_t: B(t-T) \rightarrow Z$ for each $t\in\r$ such that for any $x, y\in B(t-T)$,
   \begin{align}
   & \|K_t x-K_t y\|_Z\leq L\|x-y\|_{X_{t-T}}, \label{3.3}\\
   & \|U(t, t-T)x-U(t, t-T)y\|_{X_t}\leq \eta \|x-y\|_{X_{t-T}}+n_Z\left(K_t x-K_t y\right), \label{3.4}
   \end{align}
  where $\eta\in (0, 1/2)$, $L>0$ are constants independent of $t$.
\end{description}
Then, there exists a semi-invariant family $\E=\{E(t)\}_{t\in\r}$ possessing the following properties:
\begin{description}
  \item (i)  the section $E(t)\subset B(t)$ is compact in $X_t$ for each $t\in\r$  and its fractal dimension in $X_t$ is uniformly bounded, i.e.,
       \begin{equation}\label{3.5}
  \sup_{t\in \mathbb{R}} \mathrm{dim}_f\left(E(t); X_t\right)\leq \Big[\ln\big(\frac{1}{2\eta}\big)\Big]^{-1}\ln m_Z\Big(\frac{2L}{\eta}\Big)<+\infty,
\end{equation}
where $m_Z(R)$ is the maximal number of elements $z_i$ in the ball $\{z\in Z|\|z\|_Z\leq R\}$ such that $n_Z(z_i-z_j)>1$, $i \neq j$;
  \item (ii)  there exist  positive constants $\beta$, $C$ and $\tau_0$ such that
      \begin{equation}\label{3.6}
      \mathrm{dist}_{X_t}\left(U(t, t-\tau)B(t-\tau), E(t)\right)\leq Ce^{-\beta \tau}\ \ \hbox{as}\ \ \tau\geq \tau_0, \forall t\in\r.
       \end{equation}
 \end{description}
\end{theorem}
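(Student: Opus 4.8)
The plan is to run the by now standard construction of an exponential attractor from the smoothing (quasi-stability) estimate $(H_3)$, but adapted to the pullback setting using the uniform-in-$t$ constants supplied by $(H_2)$ and $(H_3)$. Throughout, the governing scale is $q:=2\eta$, and the requirement $\eta\in(0,1/2)$ is there precisely to make $q<1$. Note also that the conclusion \eqref{3.6} only asks to attract the single family $\mathcal B$, so it suffices to control the geometry of the images $U(t,t-\tau)B(t-\tau)$.

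\emph{The engine is a one-step covering lemma.} First I would prove: if $G\subseteq B(t-T)$ has $\mathrm{diam}_{X_{t-T}}G\le d$, then $U(t,t-T)G$ is covered by at most $N_*:=m_Z(2L/\eta)$ sets, each of diameter $\le 2\eta d$ in $X_t$. Fix $x_0\in G$ and choose a maximal subset $\{x_i\}\subseteq G$ with $n_Z(K_t x_i-K_t x_j)>\eta d/2$ for $i\neq j$. By \eqref{3.3}, $\|K_t x_i-K_t x_0\|_Z\le L\,\mathrm{diam}\,G\le Ld$, so the $K_t x_i$ are $(\eta d/2)$-separated in $n_Z$ inside a $Z$-ball of radius $Ld$; homogeneity of $\|\cdot\|_Z$ and $n_Z$ and the definition of $m_Z$ bound their number by $m_Z\big(Ld/(\eta d/2)\big)=m_Z(2L/\eta)=N_*$. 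Maximality partitions $G$ into $\le N_*$ pieces on each of which $n_Z(K_t x-K_t x')\le \eta d$, and then \eqref{3.4} gives $\|U(t,t-T)x-U(t,t-T)x'\|_{X_t}\le \eta d+\eta d=2\eta d$, which is the claim.

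\emph{The discrete skeleton.} Put $R_0:=\sup_t\|B(t)\|_{X_t}$ and $d_0:=2R_0$. Iterating the covering lemma along the grid $\{t-nT\}$, and using the forward invariance \eqref{3.1} (each step has $\tau=T\ge T$, so images remain in the $B(\cdot)$'s), the set $U(t,t-nT)B(t-nT)$ is covered by $\le N_*^{\,n}$ sets of diameter $\le (2\eta)^n d_0$ in $X_t$. Selecting one representative per piece, inside $B(t)$ and chosen compatibly one step at a time, I would build finite nets $V_n(t)\subseteq B(t)$ with $|V_n(t)|\le N_*^{\,n}|V_0|$, $U(t,t-T)V_n(t-T)\subseteq V_{n+1}(t)$, with $V_n(t)$ a $(2\eta)^n d_0$-net of $U(t,t-nT)B(t-nT)$, seeding $V_0(t)\subseteq U(t,t-T)B(t-T)$ so every net point comes from an evolution of length $\ge T$. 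Setting $E_d(t):=\overline{\bigcup_{n\ge0}V_n(t)}^{\,X_t}$, one checks that $E_d(t)\subseteq B(t)$ is compact (at scale $(2\eta)^n d_0$ it is covered by $O(N_*^{\,n})$ balls), that $\dim_f(E_d(t);X_t)\le \ln m_Z(2L/\eta)/\ln(1/2\eta)$, that $U(t,t-T)E_d(t-T)\subseteq E_d(t)$ (extending to the closure via the Lipschitz continuity in $(H_2)$), and that $\mathrm{dist}_{X_t}\big(U(t,t-nT)B(t-nT),E_d(t)\big)\le (2\eta)^n d_0$.

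\emph{Continuous time, semi-invariance, rate, and the main obstacle.} I would then set $E(t):=\bigcup_{s\in[0,T]}U(t,t-s)E_d(t-s)$. For $\tau\ge0$ write $\tau=nT+s$ with $s\in[0,T)$ and factor $U(t,t-\tau)=U(t,t-s)\,U(t-s,t-\tau)$; combining the discrete attraction with the uniform Lipschitz bound \eqref{3.2} on the last step of length $s\le T$ yields \eqref{3.6} with $\beta=T^{-1}\ln(1/2\eta)>0$, $\tau_0=0$, and $C$ absorbing $L_1 d_0(2\eta)^{-1}$. Semi-invariance $U(t,\tau)E(\tau)\subseteq E(t)$ for all $t\ge\tau$ follows by writing any element of $E(\tau)$ as $U(\tau,\tau-s)\zeta$ with $\zeta\in E_d(\tau-s)$, decomposing $t-(\tau-s)=kT+s'$, and invoking the discrete semi-invariance of $E_d$. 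The hard part is exactly the non-autonomy: one must propagate the finite nets coherently across the moving fibers $X_t$ with constants uniform in $t$, and interpolate from the $T$-grid to all real times so as to obtain genuine semi-invariance for every $t\ge\tau$ (not just for multiples of $T$) while keeping the dimension uniformly bounded by \eqref{3.5}. Controlling this continuous-time thickening — keeping it inside $\mathcal B$ (handled by seeding the nets one step inside) and preventing the continuum union from inflating the fractal dimension — is where the uniform Lipschitz hypothesis \eqref{3.2} does the decisive work.
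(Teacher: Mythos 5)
Your discrete construction (the one-step covering lemma via a maximal $n_Z$-separated set, the geometric nets, and the seeded, forward-nested skeleton $E_d$) is essentially the paper's own argument, with only bookkeeping differences. The genuine gap is in your passage to continuous time. You define $E(t):=\bigcup_{s\in[0,T]}U(t,t-s)E_d(t-s)$, an uncountable union, and assert that the uniform Lipschitz estimate \eqref{3.2} controls this ``continuous-time thickening.'' It does not: \eqref{3.2} is a Lipschitz bound in the \emph{spatial} variable for each fixed pair of times, and the theorem assumes no continuity whatsoever of the process in its time variables. Nothing prevents the sets $U(t,t-s)E_d(t-s)$ from varying wildly as $s$ ranges over $[0,T]$, so your union need not be precompact in $X_t$ and its fractal dimension need not be finite; conclusion (i) is therefore out of reach for your $E(t)$. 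Even if one added a time-continuity hypothesis, a union over a one-parameter family behaves like a cylinder over the set and would generically inflate the dimension by one, which is incompatible with the precise bound \eqref{3.5} you must prove. A secondary symptom of the same problem: your scheme needs a coherent inductive construction of $E_d(r)$ for \emph{every} real $r$ (one per residue class mod $T$), a sign that the grid is anchored in the wrong place.

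The repair is the paper's device: anchor everything to one fixed grid and interpolate by a \emph{single} image rather than a union. Normalize $T=1$, run the discrete construction only at integer times to obtain compact, finite-dimensional, semi-invariant sets $E(n)$, $n\in\mathbb{Z}$, with $E(n)\subset U(n,n-1)B(n-1)\subset B(n)$, and then set $E(t):=U(t,n)E(n)$ for $t\in[n,n+1)$. Now $E(t)$ is the image of one compact set under the one Lipschitz map $U(t,n)$, so it is compact and $\dim_f\left(E(t);X_t\right)\leq \dim_f\left(E(n);X_n\right)$, preserving \eqref{3.5}; it lies in $B(t)$ because $U(t,n)U(n,n-1)B(n-1)\subset B(t)$ by \eqref{3.1}. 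Semi-invariance for all real $t\geq s$ follows from grid alignment: writing $t\in[n,n+1)$, $s\in[m,m+1)$ (so $n\geq m$), one has $U(t,s)E(s)=U(t,n)U(n,m)E(m)\subset U(t,n)E(n)=E(t)$. The exponential attraction \eqref{3.6} then follows from the discrete attraction estimate together with \eqref{3.2} applied to the single map $U(t,n)$, at the harmless price of a positive threshold $\tau_0$ (the paper takes $\tau\geq 3$); your claim $\tau_0=0$ is lost, but the theorem only asks for some $\tau_0$.
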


\begin{corollary}\label{ea1} Let the assumptions of Theorem \ref{ea} be valid. If the family $\b=\{B(t)\}_{t\in\mathbb{R}}$ is a uniformly time-dependent absorbing set of the process  $U(t,\tau)$,  then the family $\E=\{E(t)\}_{t\in\mathbb{R}}$ given in Theorem \ref{ea} is a time-dependent exponential attractor of the process  $U(t,\tau)$.
 \end{corollary}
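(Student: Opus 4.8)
The plan is to verify directly the three defining properties of a time-dependent exponential attractor for the family $\E=\{E(t)\}_{t\in\r}$ produced by Theorem \ref{ea}, observing that two of them are already in hand and that only the attraction of \emph{arbitrary} bounded families uses the absorbing hypothesis. Property (i) of the definition (compactness of each section $E(t)$ in $X_t$ together with the uniform bound on the fractal dimensions) is precisely conclusion (i), i.e. estimate \eqref{3.5}, of Theorem \ref{ea}; and the semi-invariance $U(t,\tau)E(\tau)\subset E(t)$ for $t\geq\tau$ is built into $\E$ by the theorem. Hence the entire task reduces to upgrading the attraction property \eqref{3.6}, which only controls the image of the particular family $\b$, into the uniform exponential attraction of every uniformly bounded family $\cald=\{D(t)\}_{t\in\r}$ required by property (iii).

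First I would fix such a $\cald$ and set $R:=\sup_{t\in\r}\|D(t)\|_{X_t}<+\infty$, so that $D(\tau)\subset\mathbb B_\tau(R)$ for every $\tau$. The idea is a two-stage decomposition of the evolution: an initial \emph{absorbing} phase that funnels $\cald$ into $\b$, followed by an \emph{attracting} phase governed by \eqref{3.6}. Concretely, since $\b$ is a uniformly time-dependent absorbing set, Definition \ref{26} provides $\tau_e=\tau_e(R)\geq0$ with $U(t,\tau)\mathbb B_\tau(R)\subset B(t)$ whenever $t-\tau\geq\tau_e$. For elapsed time $\tau$ large I split, via the process identity $U(t,s)U(s,\tau)=U(t,\tau)$, at the intermediate instant $t-\tau_2$ with $\tau_2:=\tau-\tau_e$, so that the initial leg spans exactly the absorbing time $\tau_e$ and gives
\begin{equation*}
U(t-\tau_2,t-\tau)D(t-\tau)\subset U(t-\tau_2,t-\tau)\mathbb B_{t-\tau}(R)\subset B(t-\tau_2).
\end{equation*}

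Next I would apply $U(t,t-\tau_2)$ and invoke the monotonicity of the Hausdorff semidistance in its first argument, namely $A_1\subset A_2\Rightarrow\mathrm{dist}_{X_t}(A_1,E(t))\leq\mathrm{dist}_{X_t}(A_2,E(t))$, together with \eqref{3.6}: provided $\tau_2\geq\tau_0$, that is $\tau\geq\tau_0+\tau_e$,
\begin{align*}
\mathrm{dist}_{X_t}\!\left(U(t,t-\tau)D(t-\tau),E(t)\right)
&\leq \mathrm{dist}_{X_t}\!\left(U(t,t-\tau_2)B(t-\tau_2),E(t)\right)\\
&\leq Ce^{-\beta\tau_2}=Ce^{\beta\tau_e}e^{-\beta\tau}.
\end{align*}
Setting $C(\cald):=Ce^{\beta\tau_e(R)}$ and $\tau(\cald):=\tau_0+\tau_e(R)$ then yields exactly property (iii), with the \emph{same} exponential rate $\beta$ as in Theorem \ref{ea}; this $\beta$ is independent of $\cald$, while the $R$-dependence is absorbed entirely into the prefactor and the threshold through $\tau_e(R)$.

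There is no deep obstacle here, as all the analytic content resides in Theorem \ref{ea} and the corollary is essentially a bookkeeping argument. The one point demanding care is the calibration of the two stages: one must spend exactly the absorbing time $\tau_e$ first, so that the attracting estimate \eqref{3.6} can legitimately be applied to $\b$ over the remaining time $\tau-\tau_e$, and then confirm that the resulting decay exponent is still $\beta$ rather than a degraded rate, and that $C(\cald)$ and $\tau(\cald)$ depend on $\cald$ only through its uniform bound $R$. Checking the monotonicity of $\mathrm{dist}_{X_t}(\cdot,E(t))$ in its first slot and that the process composition law justifies the splitting completes the proof.
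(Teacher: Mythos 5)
Your proposal is correct and follows essentially the same argument as the paper: split the elapsed time into an initial absorbing leg of length $\tau_e(R)$ that carries $D(t-\tau)$ into $\b$, then apply the attraction estimate \eqref{3.6} over the remaining time, yielding the constant $Ce^{\beta\tau_e(R)}$ and threshold $\tau_0+\tau_e(R)$ exactly as in the paper's proof. The only cosmetic difference is notation ($\tau_e(R)$ versus the paper's $e(R)$) and your explicit remark that properties (i) and (ii) are inherited directly from Theorem \ref{ea}, which the paper leaves implicit.
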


\begin{proof}
For any uniformly bounded family $\cald=\{D(t)\}_{t\in\mathbb{R}}$, there exist  positive constants $R$ and $e(R)$ such that $D(t)\subset \mathbb{B}_t(R)$ for all $t\in \mathbb{R}$ and
\begin{equation*}
  U(t, t-\tau)D(t-\tau)\subset U(t, t-\tau)\mathbb{B}_{t-\tau}(R)\subset B(t), \ \ \forall \tau\geq e(R).
\end{equation*}
Thus, it follows from \eqref{3.6} that, for every $t\in\mathbb{R}$, $\tau\geq e(R)+\tau_0$,
\begin{equation*}
  \begin{split}
    &\mathrm{dist}_{X_t}\left(U(t,t-\tau)D(t-\tau), E(t)\right)\\
  \leq\ & \mathrm{dist}_{X_t}\left(U(t, t-\tau+e(R))U(t-\tau+e(R), t-\tau)D(t-\tau), E(t)\right) \\
       \leq\ & \mathrm{dist}_{X_t}\left(U(t, t-\tau+e(R))B(t-\tau+e(R)), E(t)\right)\\
     \leq\ & Ce^{\beta e(R)}e^{-\beta\tau}.
  \end{split}
\end{equation*}
This completes the proof.
\end{proof}

\begin{corollary}\label{ea2} Let the assumptions of Theorem \ref{ea} be valid. If the process $U(t,\tau)$ possesses    a uniformly  time-dependent absorbing set $\{\mathbb{B}_t(R_1)\}_{t\in\mathbb{R}}$  possessing the following properties:
\begin{enumerate}[$(i)$]
 \item there is a positive constant $R_2>R_1$ such that $B(t)\subset \mathbb{B}_t(R_2)$ for all $t\in\r$ and formula \eqref{3.2} holds on $\mathbb{B}_t(R_2)$, that is
\begin{equation}\label{3.8}
  \|U(t, t-\tau)x- U(t, t-\tau)y\|_{X_t}\leq L_1\|x-y\|_{X_{t-\tau}}
  \end{equation}
  for all $x,y\in \mathbb{B}_{t-\tau}(R_2)$, $\tau\in [0, T]$ and $t\in\mathbb{R}$;
  \item there exist positive constants $\kappa$, $\tau_1$ and $C_0$ such that
\begin{equation}\label{3.7}
  \mathrm{dist}_{X_t}\left(U(t, t-\tau)\mathbb{B}_{t-\tau}(R_1), B(t)\right)\leq C_0 e^{-\kappa \tau}, \ \ \forall t\in \mathbb{R}, \tau\geq \tau_1.
\end{equation}
\end{enumerate}
 Then the family $\E=\{E(t)\}_{t\in\mathbb{R}}$ given in Theorem \ref{ea} is a time-dependent exponential attractor of the process  $U(t,\tau)$.
 \end{corollary}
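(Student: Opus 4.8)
The plan is to verify the three defining properties of a time-dependent exponential attractor for the family $\E=\{E(t)\}_{t\in\r}$ produced by Theorem \ref{ea}. Properties (i) (compactness of each section together with the uniform bound on $\dim_f(E(t);X_t)$) and (ii) (semi-invariance) are already furnished verbatim by the conclusion of Theorem \ref{ea}, so the only point requiring work is the uniform exponential pullback attraction (iii). Since Theorem \ref{ea} only gives attraction of $\E$ from the forward-invariant family $\b$ via \eqref{3.6}, while here $\b$ is no longer assumed absorbing, the heart of the argument is a \emph{two-step} estimate: drive a bounded set into the absorbing ball $\mathbb B_\bullet(R_1)$, let the trajectory approach $B(\bullet)$ by \eqref{3.7}, and then follow the attraction of $B$ towards $E$ via \eqref{3.6}, transporting the intermediate closeness forward with the Lipschitz bound \eqref{3.8}.

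First I would prove the core estimate
\[
\mathrm{dist}_{X_t}\big(U(t,t-\tau)\mathbb B_{t-\tau}(R_1),E(t)\big)\le C'e^{-\beta'\tau},\qquad \tau\ge\tau',
\]
for suitable $C',\beta',\tau'$ independent of $t$. Fix a split ratio $\alpha\in(0,1)$ and set the intermediate time $r=t-\alpha\tau$. Using the triangle inequality for the Hausdorff semidistance with the interposed set $U(t,r)B(r)$ and the composition property $U(t,t-\tau)=U(t,r)U(r,t-\tau)$, I would bound the left-hand side by $I+II$, where $II=\mathrm{dist}_{X_t}(U(t,r)B(r),E(t))\le Ce^{-\beta\alpha\tau}$ directly from \eqref{3.6} (valid once $\alpha\tau\ge\tau_0$), and $I=\mathrm{dist}_{X_t}\big(U(t,r)[U(r,t-\tau)\mathbb B_{t-\tau}(R_1)],U(t,r)B(r)\big)$. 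For $I$ I would apply \eqref{3.7} over $[t-\tau,r]$ of length $(1-\alpha)\tau$ to get $\mathrm{dist}_{X_r}(U(r,t-\tau)\mathbb B_{t-\tau}(R_1),B(r))\le C_0e^{-\kappa(1-\alpha)\tau}$, then push this closeness forward under $U(t,r)$ by iterating \eqref{3.8} over $\lceil\alpha\tau/T\rceil$ subintervals of length at most $T$, which produces the factor $L_1^{\lceil\alpha\tau/T\rceil}\le L_1\,e^{(\alpha\ln L_1/T)\tau}$. Hence $I\le C_0L_1\exp\big[(\tfrac{\alpha}{T}\ln L_1-\kappa(1-\alpha))\tau\big]$, and choosing $\alpha$ small enough that $\tfrac{\alpha}{T}\ln L_1<\kappa(1-\alpha)$ forces both $I$ and $II$ to decay, giving the core estimate with $\beta'=\min\{\kappa(1-\alpha)-\tfrac{\alpha}{T}\ln L_1,\ \beta\alpha\}$.

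The technical point to check when iterating \eqref{3.8} is that at the endpoint of each subinterval both trajectories lie in the ball of radius $R_2$, so that \eqref{3.8} is applicable. For the $B$-trajectory this follows from the forward invariance \eqref{3.1}: each grid time $r+jT$ with $j\ge 1$ is at distance $\ge T$ from $r$, so $U(r+jT,r)B(r)\subset B(r+jT)\subset\mathbb B_{r+jT}(R_2)$ by hypothesis $(i)$; for the other trajectory it follows because, after evolving over the long time $(1-\alpha)\tau\ge\tau_e(R_1)$, the absorbing property of $\{\mathbb B_s(R_1)\}$ forces $U(s,t-\tau)\mathbb B_{t-\tau}(R_1)\subset\mathbb B_s(R_1)\subset\mathbb B_s(R_2)$ at every relevant time $s\in[r,t]$. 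This balancing is the main obstacle: the exponential decay coming from \eqref{3.7}--\eqref{3.6} must dominate the exponential growth $L_1^{\alpha\tau/T}$ of the iterated Lipschitz constant, and it is precisely the freedom in the choice of $\alpha$ (taking $L_1\ge 1$ without loss of generality) that resolves it.

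Finally I would pass from $\mathbb B_\bullet(R_1)$ to an arbitrary uniformly bounded family $\cald=\{D(t)\}_{t\in\r}$. Choosing $R$ with $D(t)\subset\mathbb B_t(R)$ and using that $\{\mathbb B_t(R_1)\}$ is absorbing (Definition \ref{26}), there is $p=\tau_e(R)$ with $U(t-\tau+p,t-\tau)D(t-\tau)\subset\mathbb B_{t-\tau+p}(R_1)$ for $\tau\ge p$. By monotonicity of the Hausdorff semidistance and the core estimate applied over $[t-\tau+p,t]$,
\[
\mathrm{dist}_{X_t}\big(U(t,t-\tau)D(t-\tau),E(t)\big)\le\mathrm{dist}_{X_t}\big(U(t,t-\tau+p)\mathbb B_{t-\tau+p}(R_1),E(t)\big)\le C'e^{\beta'p}e^{-\beta'\tau}
\]
for all $\tau\ge\tau(\cald)$ and $t\in\r$, which is property (iii) with $C(\cald)=C'e^{\beta'\tau_e(R)}$. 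Combined with properties (i) and (ii) inherited directly from Theorem \ref{ea}, this shows that $\E$ is a time-dependent exponential attractor of the process $U(t,\tau)$.
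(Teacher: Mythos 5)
Your proof is correct and follows essentially the same route as the paper's: interpose $U(t,t-\theta\tau)B(t-\theta\tau)$ via the triangle inequality, control one term by \eqref{3.6}, transport the estimate \eqref{3.7} forward by iterating the Lipschitz bound \eqref{3.8} (with growth $L_1^{\theta\tau/T+1}$ beaten by choosing the split ratio small), and finally pass to arbitrary uniformly bounded families through the absorbing property as in Corollary \ref{ea1}. Your explicit verification that both trajectories stay in $\mathbb{B}_\bullet(R_2)$ at the grid points is a point the paper leaves implicit, but the argument is the same.
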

\begin{proof}By the definition of uniformly  time-dependent absorbing set, there exists  a positive constant $e(R_1)>\tau_1$ such that for any given   $\theta\in (0, 1)$,
 \begin{equation*}
U(t-\tau\theta, t-\tau)\mathbb{B}_{t-\tau}(R_1)\subset \mathbb{B}_{t-\tau\theta}(R_1),\ \ \forall \tau\geq (1-\theta)^{-1} e(R_1),  \ t\in\r.
\end{equation*}
 Thus, it follows from  \eqref{3.6}-\eqref{3.8}  and the fact:  $B(t)\subset \mathbb{B}_t(R_2)$ for all $t\in \mathbb{R}$ that
\begin{equation}\label{3.9}
 \begin{split}
 & \mathrm{dist}_{X_t}\left(U(t, t-\tau)\mathbb{B}_{t-\tau}(R_1), E(t)\right) \\
 \leq\ & \mathrm{dist}_{X_t}\left(U(t, t-\tau\theta)U(t-\tau\theta, t-\tau)\mathbb{B}_{t-\tau}(R_1), U(t, t-\tau\theta)B(t-\tau\theta)\right)\\
& + \mathrm{dist}_{X_t}\left(U(t, t-\tau\theta)B(t-\tau\theta), E(t)\right)\\
\leq\ & L_1^{(\frac{\tau\theta}{T}+1)}\mathrm{dist}_{X_{t-\tau\theta}}\left(U(t-\tau\theta, t-\tau)\mathbb{B}_{t-\tau}(R_1), B(t-\tau\theta)\right)+ Ce^{-\beta\tau\theta}\\
\leq\ &C_0L_1e^{\left(\frac{\theta}{T}\ln L_1+\kappa\theta-\kappa\right)\tau}+ Ce^{-\beta\tau\theta}= (C+C_0L_1)e^{-\beta'\tau}
  \end{split}
\end{equation}
for all $\tau \geq \tau_2:=\max\left\{\theta^{-1}\tau_0, (1-\theta)^{-1} e(R_1)\right\}$ and $t\in\mathbb{R}$, where
\begin{equation*}
  \theta=\frac{T\kappa}{2(\ln L_1+ T\kappa)}\in (0, 1) \ \ \hbox{and}\ \ \beta'=\min\left\{\frac{\kappa}{2}, \frac{T\kappa \beta}{2(\ln L_1+ T\kappa)}\right\}>0.
\end{equation*}
Then, repeating the same argument as the proof of Corollary \ref{ea1} and using estimate \eqref{3.9}, we obtain  that for any uniformly bounded family $\cald=\{D(t)\}_{t\in\mathbb{R}}$,
\begin{equation*}
\mathrm{dist}_{X_t}\left(U(t,t-\tau)D(t-\tau), E(t)\right)\leq (C+C_0L_1)e^{\beta' e(R)}e^{-\beta'\tau}, \ \ \forall \tau\geq \tau_2+e(R), \ t\in\r.
\end{equation*}
This  completes the proof.
\end{proof}
\medskip

\begin{proof}[\textbf{Proof of Theorem \ref{ea}}] For clarity and  without loss of generality, we assume $T=1$. Since $\b=\{B(t)\}_{t\in\r}$ is a uniformly bounded family, we have
\begin{equation*}
B(t)\subset \mathbb B_t(R_0),  \ \  \forall t\in\r
\end{equation*}
for some positive constant $R_0$. Thus  $N_t\left(B(t), R_0\right)=1$ for all $t\in\r$, here $N_t(B, \e)$ denotes the cardinality of minimal covering of the set $B\subset X_t$ by its closed subsets of diameter $\leq 2\e$.

Conditions $(H_1)$-$(H_3)$ show  that
\begin{equation}\label{3.10}
U(m, n)B(n)\subset B(m), \ \ \forall m\geq n,
\end{equation}
and  for all $x, y\in B(n-1)$ and $n\in\mathbb Z$,
\begin{align}
& \|U(n, n-1)x- U(n, n-1)y\|_{X_n}\leq L_1\|x-y\|_{X_{n-1}},\label{3.11}\\
& \|K_n x-K_ny\|_Z\leq L\|x-y\|_{X_{n-1}},\label{3.12}\\
&\|U(n, n-1)x- U(n, n-1)y\|_{X_n}\leq \eta\|x-y\|_{X_{n-1}}+n_Z\left(K_n x-K_ny\right).\label{3.13}
\end{align}

Now  we   show the following formula  for all $n\in\mathbb Z$ by induction on $k\in\mathbb N$
\begin{equation}\label{3.14}
N_n(k):= N_n\left(U(n, n-k)B(n-k), (2\eta)^k R_0\right)\leq \left[m_Z\left(\frac{2L}{\eta}\right)\right]^k.
\end{equation}

Let, for $k=1$ and $n\in\mathbb Z$,
\begin{equation*}
n(1):=\aleph\left\{x_l\in B(n-1)\ |\ n_Z\left(K_n x_j-K_nx_i\right)>\eta R_0,\ j\neq i\right\},
\end{equation*}
where $\aleph\{\cdots\}$ denotes the maximal number of elements with the given properties.
And let
\begin{equation*}
\b(n-1)=K_n B(n-1)=\left\{K_n x\ |\ x\in B(n-1)\right\}.
\end{equation*}
Formula  \eqref{3.12} implies that
\begin{align*}&
\mathrm{diam}\left(\b(n-1); Z\right)\leq L\mathrm{diam}\left(B(n-1); X_{n-1}\right)\leq 2L R_0,\\
& \b(n-1)\subset \mathbb B_Z(y; 2LR_0)=\left\{z\in Z\ |\ \|z-y\|_Z\leq 2LR_0\right\} \ \ \hbox{for some}\ \ y\in Z.
\end{align*}
By the linearity and compactness of the seminorm $n_Z(\cdot)$,
\begin{equation}\label{3.15}
\begin{split}
n(1) &= \aleph\left\{z_l\in \b(n-1)\ |\ n_Z\left(z_j-z_i\right)>\eta R_0,\ j\neq i\right\}\\
     &\leq \aleph\left\{z_l\in \mathbb B_Z(y; 2LR_0)\ |\ n_Z\left(z_j-z_i\right)>\eta R_0,\ j\neq i\right\}\\
 &= \aleph\left\{z_l\in \mathbb B_Z\left(0; \frac{2L}{\eta}\right)\ |\ n_Z\left(z_j-z_i\right)>1,\ j\neq i\right\}\\
 &=m_Z\left( \frac{2L}{\eta}\right)<+\infty.
\end{split}
\end{equation}
Consequently, there exists a maximal subset $\{x_j\}^{n(1)}_{j=1}$ of $B(n-1)$ such that
\begin{equation*}
n_Z\left(K_n x_j-K_nx_l\right)>\eta R_0, \ \ j \neq l,
\end{equation*}
and
\begin{align*}&
B(n-1)=\bigcup^{n(1)}_{j=1}C_j \ \ \hbox{with}\ \ C_j=\left\{x\in B(n-1)\ |\ n_Z\left(K_n x-K_nx_j\right)\leq \eta R_0\right\},\\
&
U(n, n-1)B(n-1)=\bigcup^{n(1)}_{j=1}U(n, n-1)C_j.
\end{align*}
Formula  \eqref{3.13} implies that, for all $x, y\in C_j$ and all $j=1, \cdots, n(1)$,
\begin{equation*}
\begin{split}
&\|U(n, n-1)x- U(n, n-1)y\|_{X_n}\\
\leq\ & \eta\|x-y\|_{X_{n-1}}+n_Z\left(K_n x-K_ny\right)\\
\leq\ & \eta \mathrm{diam}\left(B(n-1); X_{n-1}\right)+2\eta R_0\leq 4\eta R_0,
\end{split}
\end{equation*}
 which implies
\begin{equation}\label{3.16}
\mathrm{diam}\left(U(n, n-1)C_j; X_n\right)\leq 4\eta R_0, \ \ j=1, \cdots, n(1).
\end{equation}
Thus, the combination of \eqref{3.15} and \eqref{3.16} shows that
\begin{equation*}
N_n(1)= N_n\left(U(n, n-1)B(n-1), 2\eta R_0\right)\leq m_Z\left(\frac{2L}{\eta}\right),
\end{equation*}
that is, formula \eqref{3.14} holds for $k=1$ and $n\in\mathbb Z$.

Assume that formula \eqref{3.14} holds for all $1\leq k\leq k_0$ and $n\in\mathbb Z$.  We prove  that it also holds for $k=k_0+1$.  Due to
\begin{equation*}
U(n, n-k_0-1)B(n-k_0-1)= U(n, n-1)U(n-1, n-k_0-1)B(n-k_0-1),
\end{equation*}
and
\begin{equation}\label{3.17}
N_n(k_0)= N_n\left(U(n, n-k_0)B(n-k_0), (2\eta)^{k_0} R_0\right)\leq \left[m_Z\left(\frac{2L}{\eta}\right)\right]^{k_0}, \ \ \forall n\in\mathbb Z,
\end{equation}
 there exists a minimal covering $\{F_i\}^{N_{n-1}(k_0)}_{i=1}$ of $U(n-1, n-k_0-1)B(n-k_0-1)$ by its closed subsets of diameter $\leq 2(2\eta)^{k_0} R_0$, that is,
\begin{align}
& \bigcup^{N_{n-1}(k_0)}_{i=1} F_i= U(n-1, n-k_0-1)B(n-k_0-1)\subset B(n-1), \ \ \label{3.18}\\
& \mathrm{diam}\left(F_i; X_{n-1}\right)\leq 2(2\eta)^{k_0} R_0, \ \ i=1, \cdots, N_{n-1}(k_0).\label{3.19}
\end{align}
 Let
\begin{equation*}
\mathcal F_i= K_nF_i =\{K_n z\ |\ z\in F_i\}\subset Z,\ \ i=1, \cdots, N_{n-1}(k_0).
\end{equation*}
The combination of  \eqref{3.12} and \eqref{3.19} shows that
\begin{align*}&
\mathrm{diam}\left(\mathcal F_i; Z\right)\leq L\mathrm{diam}\left(F_i; X_{n-1}\right)\leq 2L(2\eta)^{k_0} R_0,\\
&\mathcal F_i\subset \mathbb B_Z\left(y_i; 2L(2\eta)^{k_0} R_0\right)=\left\{z\in Z\ |\ \|z-y_i\|_Z\leq 2L(2\eta)^{k_0} R_0\right\}\ \ \hbox{for some}\ \ y_i\in Z.
\end{align*}
 Hence,
\begin{equation}\label{3.20}
\begin{split}
n_i(k_0+1):&= \aleph\left\{x_l\in F_i\ |\ n_Z\left(K_n x_j-K_n x_m\right)>\eta (2\eta)^{k_0} R_0,\ j\neq m\right\}\\
 &= \aleph\left\{z_l\in \mathcal F_i\ |\ n_Z\left(z_j-z_m\right)>\eta (2\eta)^{k_0} R_0,\ j\neq m\right\}\\
     &\leq \aleph\left\{z_l\in \mathbb B_Z\left(y_i; 2L(2\eta)^{k_0} R_0\right)\ |\ n_Z\left(z_j-z_m\right)>\eta (2\eta)^{k_0} R_0,\ j\neq m\right\}\\
 &= \aleph\left\{z_l\in \mathbb B_Z\left(0; \frac{2L}{\eta}\right)\ |\ n_Z\left(z_j-z_m\right)>1,\ j\neq m\right\}\\
 &=m_Z\left( \frac{2L}{\eta}\right)<+\infty.
\end{split}
\end{equation}
Consequently,
\begin{equation}\label{3.21}
F_i=\bigcup^{n_i(k_0+1)}_{j=1}C^i_j \ \ \hbox{with}\ \ C^i_j=\left\{x\in F_i\ |\ n_Z\left(K_n x-K_nx^i_j\right)\leq \eta (2\eta)^{k_0} R_0\right\},
\end{equation}
where $\{x^i_j\}^{n_i(k_0+1)}_{j=1}$ is the maximal subset of $F_i$ such that
 \begin{equation*}
n_Z\left(K_n x^i_l-K_nx^i_j\right)> \eta (2\eta)^{k_0} R_0, \ \  l \neq j.
\end{equation*}
It follows from \eqref{3.13} and \eqref{3.17}-\eqref{3.21} that
\begin{align*}
& \mathrm{diam}\left(U(n, n-1)C_j; X_n\right)\leq \eta\mathrm{diam}\left(F_i; X_{n-1}\right)+(2\eta)^{k_0+1} R_0 \leq 2(2\eta)^{k_0+1} R_0,\\
& U(n, n-k_0-1)B(n-k_0-1)=\bigcup_{i=1}^{N_{n-1}(k_0)}\bigcup_{j=1}^{n_i(k_0+1)} U(n, n-1) C^i_j,
\end{align*}
which imply that
\begin{equation*}
\begin{split}
N_n(k_0+1)&=N_n\left( U(n, n-k_0-1)B(n-k_0-1),  (2\eta)^{k_0+1} R_0\right)\\
          &\leq \sum_{i=1}^{N_{n-1}(k_0)}n_i(k_0+1)\leq \left[m_Z\left(\frac{2L}{\eta}\right)\right]^{k_0+1}.
\end{split}
\end{equation*}
Therefore, formula \eqref{3.14} is valid.
\medskip

It follows from \eqref{3.10} and \eqref{3.14} that, for every $n\in\mathbb Z$ and every $k\geq 1$, there is a finite subset $V_k(n)$ possessing the following properties:
\begin{align}
& \mathrm{Card}\left(V_k(n)\right)\leq \left[m_Z\left(\frac{2L}{\eta}\right)\right]^k,\label{3.24}\\
& V_k(n)\subset U(n, n-k)B(n-k)\subset B(n)\subset X_n,\label{3.22}\\
& U(n, n-k)B(n-k)\subset \bigcup_{h\in V_k(n)} \left[\mathbb B_n\left(2(2\eta)^k R_0 \right)+\{h\}\right]. \label{3.23}
\end{align}
For any given $n\in\mathbb Z$, we define by induction that
\begin{equation}\label{3.25}
\left\{
  \begin{array}{ll}
    E_1(n)=V_1(n),  \\
~\\
    E_k(n)= V_k(n)\cup \left[U(n, n-1)E_{k-1}(n-1)\right],\ \ k\geq 2,  \\
~\\
    E(n)=\left[\bigcup_{k\geq 1}E_k(n)\right]_{X_n},
  \end{array}
\right.
\end{equation}
where  $[\cdot]_{X_n}$ denotes the closure in $X_n$. Thus  it follows from \eqref{3.10} and \eqref{3.22}-\eqref{3.23} that
\begin{align}
& E_k(n)=\bigcup_{l=0}^{k-1}U(n, n-l)V_{k-l}(n-l)\subset U(n, n-k)B(n-k),\label{3.26}\\
& U(n+1, n)E_k(n)\subset E_{k+1}(n+1),\label{3.27}\\
& E(n)\subset U(n, n-1)B(n-1)\subset B(n),\ \ \forall  n\in\mathbb Z, \ k\geq 1.\label{3.28}
\end{align}
  Moreover, we infer from \eqref{3.24} and \eqref{3.26} that
\begin{align}\label{3.29}
  \mathrm{Card}\left(E_k(n)\right)& \leq \sum_{l=0}^{k-1} \mathrm{Card}\left(V_{k-l}(n-l)\right)\nonumber\\
  &\leq \sum_{l=0}^{k-1}\left[m_Z\left(\frac{2L}{\eta}\right)\right]^{k-l}\leq \left[m_Z\left(\frac{2L}{\eta}\right)\right]^{k+1}, \ \ \forall  n\in\mathbb Z, \ k\geq 1.
\end{align}
 \medskip

We show that the family $\{E(n)\}_{n\in\mathbb Z}$ is of the following properties:

(i) Semi-invariance.  By the Lipschitz continuity \eqref{3.11}, formulas \eqref{3.25} and \eqref{3.27}-\eqref{3.28},
\begin{equation}\label{3.291}
U(n, l)E(l)\subset \Big[\bigcup_{k\geq 1}U(n, l)E_k(l)\Big]_{X_n}\subset\Big[\bigcup_{k\geq 1}E_{k+n-l}(n)\Big]_{X_n}\subset E(n), \ \ \forall n\geq l.
\end{equation}

(ii) Pullback exponential attractiveness. We see from \eqref{3.25} that $V_k(n)\subset E(n)$ holds for all $n\in\mathbb Z$ and $k\geq 1$. Thus  we infer from \eqref{3.23} that
\begin{equation}\label{3.30}
\begin{split}
& \mathrm{dist}_{X_n}\left(U(n, n-k)B(n-k), E(n)\right)\\
\leq \ & \mathrm{dist}_{X_n}\left(U(n, n-k)B(n-k), V_k(n)\right)\\
\leq\ & 2(2\eta)^k R_0, \ \ \forall k\geq 1, \ n\in\mathbb Z.
\end{split}
\end{equation}

(iii) Boundedness of the fractal dimension. For any $\e\in(0, 1)$, there is a unique $k_\e\in \mathbb N^+$ such that
\begin{equation}\label{3.31}
2(2\eta)^{k_\e} R_0<\e\leq 2(2\eta)^{k_\e-1} R_0.
\end{equation}
Obviously, $k_\e\rightarrow \infty$ as $\e\rightarrow 0^+$. It follows  from \eqref{3.10} and \eqref{3.26} that
\begin{equation*}
E_k(n)\subset U(n, n-k)B(n-k)\subset U(n, n-k_\e)B(n-k_\e), \ \ \forall k\geq k_\e, \ n\in\mathbb Z,
\end{equation*}
which implies that
\begin{equation*}
E(n)\subset \Big(\bigcup_{k<k_\e} E_k(n)\Big) \bigcup\Big[U(n, n-k_\e)B(n-k_\e) \Big]_{X_n}, \ \ \forall n\in\mathbb Z.
\end{equation*}
Thus it follows from \eqref{3.14}, \eqref{3.29} and \eqref{3.31} that
\begin{equation}\label{3.32}
\begin{split}
N_n\left(E(n), \e\right)&\leq N_n\left(E(n), 2(2\eta)^{k_\e} R_0\right)\\
 &\leq \sum_{k<k_\e} \mathrm{Card}\left(E_k(n)\right)+ N_n\left(U(n, n-k_\e)B(n-k_\e), 2(2\eta)^{k_\e} R_0\right)\\
&\leq \sum_{k<k_\e} \left[m_Z\left(\frac{2L}{\eta}\right)\right]^{k+1}+ N_n(k_\e)\leq 2\left[m_Z\left(\frac{2L}{\eta}\right)\right]^{k_\e+1}<+\infty.
\end{split}
\end{equation}
By the arbitrariness of $\e\in(0, 1)$, we see from \eqref{3.32} that $E(n)$ is a compact subset of $X_n$. Moreover, estimates \eqref{3.31}-\eqref{3.32} and a simple calculation shows that
\begin{equation*}
  \frac{\ln N_n\left(E(n),\epsilon\right)}{\ln{(1/\e)}}\leq \frac{(k_\e+1)\ln\left[m_Z\left(\frac{2L}{\eta}\right)\right]+\ln 2 }{(k_\e-1)\ln(1/ 2\eta)-\ln(2R_0)}, \ \ \e\in(0, 1),
\end{equation*}
which implies that
\begin{equation}\label{3.33}
  \mathrm{dim}_f\left(E(n); X_n \right)=\limsup_{\e\rightarrow 0^+}\frac{\ln N_n\left(E(n),\epsilon\right)}{\ln{(1/\e)}} \leq  \Big[\ln\Big(\frac{1}{2\eta}\Big)\Big]^{-1}\ln \left[ m_Z\left(\frac{2L}{\eta}\right)\right],\ \ \forall n\in\mathbb Z.
\end{equation}
\bigskip

For any $t\in\r$, there exists a unique $n\in\mathbb Z$ such that $t\in[n, n+1)$. Let
\begin{equation}\label{3.34}
E(t)=U(t, n)E(n).
\end{equation}

We claim that   $\{E(t)\}_{t\in\r}$ is the desired family.

 (i) It follows from formulas \eqref{3.1} and \eqref{3.28} that
\begin{equation*}
E(t)=U(t, n)E(n)\subset U(t, n)U(n, n-1)B(n-1)\subset B(t), \ \ \forall t\in\r.
\end{equation*}
Moreover, by the Lipschitz continuity  of $U(t, n): B(n)\subset X_n\rightarrow X_t $ (see \eqref{3.2}), and the compactness of $E(n)$ in $X_n$, we know that $E(t)$ is a compact subset of $X_t$ and by \eqref{3.33},
\begin{equation*}
\mathrm{dim}_f\left(E(t); X_t \right)\leq  \mathrm{dim}_f\left(E(n); X_n \right) \leq  \Big[\ln\Big(\frac{1}{2\eta}\Big)\Big]^{-1}\ln \left[ m_Z\left(\frac{2L}{\eta}\right)\right].
\end{equation*}

(ii) For any $t\geq s\in\r$, let $t=n+t_1$, $s=m+s_1$ for some $n, m\in\mathbb Z$ and $t_1, s_1\in [0, 1)$. Then, by formula \eqref{3.34} and the semi-invariance  of $\{E(n)\}_{n\in\mathbb Z}$ (see \eqref{3.291}), we have
\begin{equation*}
\begin{split}
U(t, s)E(s)&= U(t, n)U(n, s)U(s, m)E(m)\\
 &= U(t, n)U(n, m)E(m)\subset U(t, n)E(n)=E(t).
\end{split}
\end{equation*}

(iii) For any given $t\in\r$ and $\tau\geq 3$, there exist $n\in\mathbb Z$ and $k_\tau\in\mathbb N^+$ such that
\begin{equation*}
t\in[n ,n+1)\ \ \hbox{and}\ \ \tau\in [k_\tau+2, k_\tau+3),
\end{equation*}
which imply that
\begin{equation}\label{3.35}
n-k_\tau-(t-\tau)\geq 1, \ \ -k_\tau<-\tau+3,
\end{equation}
and by formula \eqref{3.1},
\begin{equation}\label{3.36}
\begin{split}
U(t, t-\tau)B(t-\tau)&= U(t, n)U(n, n-k_\tau)U(n-k_\tau, t-\tau)B(t-\tau)\\
&\subset U(t, n)U(n, n-k_\tau)B(n-k_\tau).
\end{split}
\end{equation}
 Thus we  infer from the Lipschitz continuity \eqref{3.2}, estimate \eqref{3.30} and formulas \eqref{3.34}-\eqref{3.35} that
\begin{equation*}
\begin{split}
& \mathrm{dist}_{X_t}\left( U(t, t-\tau)B(t-\tau), E(t)\right)\\
\leq\ &  \mathrm{dist}_{X_t}\left( U(t, n)U(n, n-k_\tau)B(n-k_\tau),  U(t, n)E(n)\right)\\
\leq\  & L_1\mathrm{dist}_{X_n}\left( U(n, n-k_\tau)B(n-k_\tau),  E(n)\right)\\
\leq\ & 2L_1 (2\eta)^{k_\tau}R_0= 2L_1R_0e^{-\beta k_\tau}\leq Ce^{-\beta\tau},\ \ \forall t\in\r, \ \tau\geq 3,
\end{split}
\end{equation*}
with $\beta=\ln \frac{1}{2\eta}$ and $C= 2L_1R_0e^3$. This  completes the proof.
\end{proof}

\begin{remark}
 Theorem \ref{ea} and  its Corollaries \ref{ea1} and \ref{ea2} are established, for simplicity, in a Banach space framework because of the definition of phase space $\h_t$ in the time-dependent memory kernel problem. However, they are  still  valid if the family of Banach spaces $\{X_t\}_{t\in \r}$ there is  replaced by  a family of normed linear spaces $\{X_t\}_{t\in \r}$.
\end{remark}

\section{Preliminaries and main results on the model \eqref{1.1}-\eqref{1.3}}

For any $\sigma\in\r$, we define the compactly nested Hilbert spaces
\begin{equation*}
H^\sigma=  D (A^{\sigma/2})
\end{equation*}
endowed with the inner products and the norms:
\begin{equation*}
\langle u, v\rangle_\sigma =\langle A^{\sigma/2}u, A^{\sigma/2} v\rangle_{L^2}, \ \   \|u\|_\sigma= \|A^{\sigma/2}u\|_{L^2},
\end{equation*}
respectively, where and in the context the operator $A$ is as shown in Eq. \eqref{1.1}, and $L^2=L^2(\Omega)$.  The index $\sigma$ will be  omitted whenever zero. The symbol $\langle\cdot,\cdot\rangle$ for the $L^2$-inner product will  also be used for the duality pairing between the  dual spaces. We denote
\begin{equation*}
L^p=L^p(\Omega), \ \  H=L^2, \ \ H^1=H^1_0(\Omega),  \ \ H^{-1}=H^{-1}(\Omega),\ \ H^2=H^2(\Omega)\cap H^1_0(\Omega),
\end{equation*}
with $p\geq 1$.
For every fixed time $t$ and index $\sigma$, we introduce the weighted $L^2$-spaces, hereafter they are  called memory spaces,
\begin{equation*}
\mathcal M^\sigma_t= L^2_{\mu_t}\left(\r^+; H^{\sigma+1}\right)=\left\{\xi: \r^+\rightarrow H^{\sigma+1}\ |\ \int_0^\infty \mu_t(s)\|\xi(s)\|^2_{\sigma+1} \d s<\infty \right\}
\end{equation*}
equipped with the weighted $L^2$-inner products
\begin{equation*}
\langle \eta, \xi\rangle_{\mathcal M^\sigma_t}=\int_0^\infty \mu_t(s)\langle \eta(s), \xi(s)\rangle_{\sigma+1}\d s.
\end{equation*}
And we define the extended memory spaces
 \begin{equation*}
\h_t^\sigma=H^{\sigma+1}\times H^\sigma\times \mathcal M^\sigma_t
\end{equation*}
equipped with the usual product norm
\begin{equation*}
\|(u, v, \eta)\|^2_{\h^\sigma_t}=\|u\|^2_{\sigma+1}+\|v\|^2_{\sigma}+\|\eta\|^2_{\mathcal M^\sigma_t}.
\end{equation*}
For any $r>0$, we   denote by
\begin{equation*}
\mathbb B^\sigma_t(r)=\left\{z\in \h^\sigma_t\ |\ \|z\|_{\h^\sigma_t}\leq r\right\}
\end{equation*}
the closed $r$-ball centered at zero of $\h_t^\sigma$.
\medskip

\subsection{Assumptions and well-posedness}

\begin{assumption} \cite{Pata2}\label{22} (i)\ Let $g\in H$ be independent of time, and let $f\in C^2(\r)$ with $f(0)=0$,
\begin{equation}\label{2.1}
|f''(u)|\leq c(1+|u|)\ \ \hbox{and}\  \ \liminf_{|u|\rightarrow \infty} f'(u)>-\lambda_1,
\end{equation}
for some $c\geq 0$,
where $\lambda_1>0$ is the first eigenvalue of $A$.
\medskip

(ii)\ The map $(t, s)\mapsto \mu_t(s): \r\times \r^+ \rightarrow \r^+$ possesses the following properties:
\begin{description}
  \item[\textbf{($M_1$)}] For every fixed $t\in\r$, the map $s \mapsto \mu_t(s)$ is nonincreasing, absolutely continuous and summable. We denote the total mass of $\mu_t$ by $
    \kappa(t)=\int_0^\infty \mu_t(s)\d s$.
   \item[\textbf{($M_2$)}] For every $\tau\in\r$, there exists a function $K_\tau : [\tau, \infty) \rightarrow \r^+$, summable on any interval $[\tau, T]$, such that $
       \mu_t(s)\leq K_\tau(t)\mu_\tau(s)$ for every $t\geq \tau$ and every $s>0$.
     \item[\textbf{($M_3$)}] For almost every fixed $s>0$, the map $t\mapsto \mu_t(s)$ is differentiable for all $t\in\r$, and $(t, s)\mapsto \mu_t(s) \in L^\infty (\mathcal K)$, $(t, s)\mapsto \partial_t\mu_t(s) \in L^\infty (\mathcal K)$
for every compact set $\mathcal K\subset \r\times \r^+$.
       \item[\textbf{($M_4$)}] There exists a $\delta>0$ such that $\partial_t\mu_t(s)+ \partial_s\mu_t(s)+\delta \kappa(t) \mu_t(s)\leq 0$  for every $t\in\r$ and almost every $s>0$.
   \item[\textbf{($M_5$)}] The function $t\rightarrow \kappa(t)$ fulfills: $
            \inf_{t\in\r} \kappa(t)>0$.
   \item[\textbf{($M_6$)}] The function $t\mapsto \partial_t\mu_t(s)$ satisfies the uniform integral estimate: $
       \sup_{t\in\r}\frac{1}{[\kappa(t)]^2}\int_0^\infty |\partial_t\mu_t(s)|\d s<\infty$.
   \item[\textbf{($M_7$)}]  For every $t\in\r$, the function $s\mapsto \mu_t(s)$ is bounded about zero, with $\sup_{t\in\r} \frac{\mu_t(0)}{[\kappa(t)]^2}<\infty$.
     \item[\textbf{($M_8$)}] For every $a<b\in\r$, there exists a $\nu>0$ such that $
          \int_\nu^{1/\nu} \mu_t(s)\d s\geq \frac{\kappa(t)}{2}$ for every $t\in [a, b]$.
\end{description}
\end{assumption}

\begin{remark}\label{remark21}
(i) The conditions $(M_1)$-$(M_8)$ are  quoted from \cite{Pata2}.  As is shown in  \cite{Pata2}, the following function
\begin{equation*}
\mu_t(s)=\frac{1}{[\varepsilon (t)]^2} e^{-\frac{s}{\varepsilon(t)}}\ \ \hbox{with}\ \ \varepsilon(t)=\frac{1}{4} \left[\frac{\pi}{2}-\arctan(t)\right]
\end{equation*}
satisfies above mentioned conditions $(M_1)$-$(M_8)$.

(ii)  Condition $(M_2)$ implies the continuous embedding: $ \mathcal M^\sigma_\tau \hookrightarrow \mathcal M^\sigma_t$ for all $\sigma\in\r$ and $t>\tau$, with
   \begin{equation*}
    \|\eta\|^2_{\mathcal M^\sigma_t}\leq K_\tau(t) \|\eta\|^2_{\mathcal M^\sigma_\tau}, \ \ \forall \eta\in \mathcal M^\sigma_\tau.
   \end{equation*}
Therefore, $ \h^\sigma_\tau \hookrightarrow \h^\sigma_t$ for all $\sigma\in\r$ and $t>\tau$.
\end{remark}

Now, we quote some   known results in recent literatures  \cite{Pata1,Pata2}, which are the bases of  our arguments. Let us begin with the definition of weak solution.

\begin{definition}\label{weaksolotion}\cite{Pata1} Let $T>\tau\in\r$, and  $z_\tau=\left(u_\tau, v_\tau, \eta_\tau\right)\in \h_\tau$ be a fixed vector. A function
 \begin{equation*}
  z(t)=\left(u(t), \partial_t u(t), \eta^t\right)\in \h_t \ \  \hbox{for a.e.}\ \ t\in [\tau, T]
 \end{equation*}
is called a weak solution of problem \eqref{1.1}-\eqref{1.3} on  interval $[\tau, T]$ if $u(\tau)=u_\tau$, $\partial_t u(\tau)=v_\tau$ and
  \begin{enumerate}[$(i)$]
    \item $u\in L^\infty \left(\tau, T; H^1\right)$, $\partial_t u\in L^\infty \left(\tau, T; H\right)$, $\partial_{tt} u\in L^1 \left(\tau, T; H^{-1}\right)$;
    \item the function $\eta^t$ fulfills the representation formula \eqref{1.2};
    \item the function  $u(t)$ fulfills \eqref{1.1} in the weak sense, i.e.,
          \begin{equation*}
          \langle \partial_{tt} u(t), \phi\rangle + \langle u(t), \phi\rangle_1
         + \int_0^\infty \mu_t(s)\langle \eta^t(s), \phi\rangle_1 \d s + \langle f\left(u(t)\right), \phi\rangle=\langle g, \phi\rangle
           \end{equation*}
    for almost every $t\in [\tau, T]$ and every   $\phi\in H^1$.
  \end{enumerate}
\end{definition}

\begin{theorem}\label{existence} \cite{Pata1} Let Assumption  \ref{22} be valid. Then  for every $T>\tau\in\r$, and every   $z_\tau=\left(u_\tau, v_\tau, \eta_\tau\right)\in \h_\tau$,  problem \eqref{1.1}-\eqref{1.3} admits a unique weak solution
$z(t)=\left(u(t), \partial_t u(t), \eta^t\right)$ on $[\tau, T]$, with
\begin{equation*}
 (u, \partial_t u)\in C\left([\tau, T]; H^1\times H\right), \ \eta^t\in\mathcal M_t, \ \ \forall t\in [\tau, T].
\end{equation*}
Moreover, for any two  weak solutions $z_1(t)$ and $z_2(t)$ on $[\tau, T]$ with $\|z_1(\tau)\|_{\h_\tau}+ \|z_2(\tau)\|_{\h_\tau}\leq R$,
\begin{equation}\label{2.6}
\|z_1(t)-z_2(t)\|_{\h_t}\leq \mathcal Q(R) e^{\mathcal Q(R)( t-\tau)} \|z_1(\tau)-z_2(\tau)\|_{\h_\tau}, \ \ \forall t\in [\tau, T],
\end{equation}
where $\mathcal Q$ is an increasing positive function independent of $t$.
\end{theorem}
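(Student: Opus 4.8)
The plan is to prove existence by a Faedo--Galerkin scheme and then to read off uniqueness together with the continuous-dependence bound \eqref{2.6} from a single energy identity for the difference of two solutions. First I would fix the $L^2$-orthonormal basis $\{w_j\}$ of eigenfunctions of $A$ and look for approximations $u_n(t)=\sum_{j=1}^n a_j^n(t)w_j$, defining the associated history $\eta_n^t$ \emph{directly} through the representation formula \eqref{1.2} rather than through a differential law for $\partial_t\eta$. This is the essential structural choice: on the time-dependent space $\mathcal M_t$ the relation $\partial_t\eta=\partial_t u-\partial_s\eta$ has no clean meaning, so \eqref{1.2} plays its role. Projecting \eqref{1.1} and inserting \eqref{1.2} turns the Galerkin system into Volterra-type integro-differential equations for the coefficients, whose local solvability follows from the kernel regularity $(M_3)$ and a Carath\'eodory fixed-point argument.

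The heart of the argument is a uniform a priori estimate. Testing the Galerkin equation with $\partial_t u_n$ gives $\tfrac12\tfrac{d}{dt}\big(\|\partial_t u_n\|^2+\|u_n\|_1^2\big)+\int_0^\infty\mu_t(s)\langle\eta_n^t(s),\partial_t u_n\rangle_1\,\d s+\langle f(u_n),\partial_t u_n\rangle=\langle g,\partial_t u_n\rangle$, while differentiating $\|\eta_n^t\|_{\mathcal M_t}^2$ in $t$, using $\partial_t\eta_n^t=\partial_t u_n-\partial_s\eta_n^t$ (valid away from the ray $s=t-\tau$) and integrating by parts in $s$ with $\eta_n^t(0)=0$, yields $\tfrac{d}{dt}\|\eta_n^t\|_{\mathcal M_t}^2=\int_0^\infty(\partial_t\mu_t+\partial_s\mu_t)(s)\|\eta_n^t(s)\|_1^2\,\d s+2\int_0^\infty\mu_t(s)\langle\partial_t u_n,\eta_n^t(s)\rangle_1\,\d s$. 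Combining the tested equation with one half of this identity makes the indefinite memory coupling cancel identically, and $(M_4)$ bounds the surviving kernel integral by $-\delta\kappa(t)\|\eta_n^t\|_{\mathcal M_t}^2\le0$, so the full energy $\tfrac12\|\partial_t u_n\|^2+\tfrac12\|u_n\|_1^2+\tfrac12\|\eta_n^t\|_{\mathcal M_t}^2+\int_\Omega F(u_n)$ (with $F'=f$) is nonincreasing up to the forcing $\langle g,\partial_t u_n\rangle$. The sign condition in \eqref{2.1} makes this energy coercive via Poincar\'e's inequality, and the growth bound $|f''|\le c(1+|u|)$ with $H^1\hookrightarrow L^6$ in $\r^3$ controls the remaining lower-order terms. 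Because \eqref{1.2} is only piecewise smooth across $s=t-\tau$, I would carry out this step as a family of integral inequalities rather than the classical differential one --- precisely the device forced by the moving space $\mathcal M_t$ --- obtaining a bound for $z_n(t)=(u_n,\partial_t u_n,\eta_n^t)$ in $\h_t$ uniform on $[\tau,T]$.

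From these bounds I would extract weak-$\ast$ limits and pass to the limit in the weak formulation of Definition \ref{weaksolotion}. The only delicate passage is the memory integral; here the embedding $\mathcal M_\tau\hookrightarrow\mathcal M_t$ of Remark \ref{remark21}(ii) together with $(M_2)$--$(M_3)$ lets me identify the weak limit of $\eta_n^t$ with the history built from the limit $u$ via \eqref{1.2}, so the limit is a genuine weak solution. Upgrading weak to strong continuity through the energy equality then gives $(u,\partial_t u)\in C([\tau,T];H^1\times H)$ and $\eta^t\in\mathcal M_t$.

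Finally, for two solutions put $\bar z=z_1-z_2$. Since \eqref{1.2} is affine in $(u,\eta_\tau)$, the difference $\bar\eta^t$ is again the history of $\bar u$, so repeating the energy computation --- now with no potential term --- gives $\tfrac12\tfrac{d}{dt}\|\bar z\|_{\h_t}^2\le-\langle f(u_1)-f(u_2),\partial_t\bar u\rangle$, and the subcritical growth of $f$ with $H^1\hookrightarrow L^6$ bounds the right-hand side by $\mathcal Q(R)\|\bar z\|_{\h_t}^2$ on the ball of radius $R$; Gr\"onwall's inequality then yields \eqref{2.6} and, in particular, uniqueness. The main obstacle throughout is that the phase space $\mathcal M_t$ itself moves with $t$: neither the exact cancellation in the energy identity nor the identification of the limit history is available from the autonomous Dafermos theory, and the roles of $(M_2)$ and $(M_4)$ are exactly to supply the monotone embedding of memory spaces and the dissipativity that replace the missing time-translation invariance.
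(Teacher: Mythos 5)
The paper does not actually prove this theorem --- it is quoted from \cite{Pata1} --- and your proposal reconstructs essentially the same strategy that source (and this paper's own introduction and Lemmas \ref{41}--\ref{43}) attributes to it: the history variable defined directly by the representation formula \eqref{1.2} instead of a differential law for $\partial_t\eta$, a priori bounds obtained through integral inequalities of exactly the type quoted here as Lemma \ref{42} combined with the dissipativity condition $(M_4)$, and a Gr\"onwall argument on the difference of two solutions yielding \eqref{2.6} and uniqueness. The only imprecision is that your displayed identity for $\frac{\d}{\d t}\|\eta_n^t\|^2_{\mathcal M_t}$ should be an inequality ``$\leq$'' (a nonpositive boundary term at $s=\infty$ is dropped, which is precisely why Lemma \ref{42} is stated as an inequality), but since you use it only as an upper bound and you yourself note that the rigorous version is the family of integral inequalities, this does not affect the argument.
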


\subsection{Main results }

Under Assumption  \ref{22}, we define the mapping
\begin{equation*}
   U(t, \tau): \h_\tau \rightarrow \h_t, \ \   U(t, \tau) z_\tau=z(t), \ \ \forall z_\tau\in \h_\tau, \   t\geq \tau,
   \end{equation*}
where  $z(t)$ is the weak solution of problem \eqref{1.1}-\eqref{1.3} corresponding to the initial data $z_\tau\in\h_\tau$. By Theorem \ref{existence}, the two-parameter family $\{U(t, \tau)\ |\ t\geq \tau\}$ constitutes  a process acting on time-dependent Banach spaces $\{\h_t\}_{t\in\r}$.

 \begin{lemma}\label{gaexist} \cite{Pata2} Let Assumption  \ref{22} be valid. Then  the process  $U(t, \tau): \h_\tau \rightarrow \h_t$ generated by problem \eqref{1.1}-\eqref{1.3} possesses the invariant time-dependent global attractor $\a=\{A(t)\}_{t\in\r}$, with $A(t)\subset \h_t^1$ for each $t\in\r$, and
\begin{equation*}
\sup_{t\in\r}\|A(t)\|_{\h^1_t}<\infty.
\end{equation*}
\end{lemma}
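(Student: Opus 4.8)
The plan is to deduce the lemma from the abstract theory of Conti, Pata and Temam \cite{Pata}, which guarantees a (unique, minimal, invariant) time-dependent global attractor for a process on time-dependent spaces as soon as the process is pullback dissipative (admits a time-dependent absorbing set in the sense of Definition \ref{26}) and is pullback asymptotically compact. Since the result is quoted from \cite{Pata2}, the proof amounts to verifying these two structural properties for $U(t,\tau)$ on $\{\h_t\}_{t\in\r}$ and then upgrading the resulting attractor to the regular class $\h^1_t$. Thus I would first establish dissipativity and asymptotic compactness, the latter being simultaneously the source of the regularity bound $A(t)\subset\h^1_t$.

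For dissipativity, I would test Eq. \eqref{1.1} against $\partial_t u$ and build a Lyapunov-type functional $\mathcal L(t)=\tfrac12\|z(t)\|^2_{\h_t}+\langle F(u(t)),1\rangle+\varepsilon\langle \partial_t u,u\rangle$ (with $F'=f$), which is equivalent to $\|z(t)\|^2_{\h_t}$ up to lower-order terms controlled through \eqref{2.1}. The crucial point is the memory component: differentiating $\|\eta^t\|^2_{\mathcal M_t}$ and using the representation \eqref{1.2} together with condition $(M_4)$ produces a dissipative term of the form $-\delta\kappa(t)\|\eta^t\|^2_{\mathcal M_t}$, while $(M_5)$, i.e. $\inf_t\kappa(t)>0$, renders this dissipation uniform in time. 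Combined with the coercivity furnished by $\liminf_{|u|\rightarrow\infty}f'(u)>-\lambda_1$, this yields a differential inequality $\tfrac{d}{dt}\mathcal L+\omega\mathcal L\le C$ with $\omega>0$ independent of $t$, hence a uniform absorbing family $\{\mathbb B_t(R_0)\}_{t\in\r}$; the comparison estimate $(M_2)$ is what transports these bounds consistently across the nested spaces $\h_\tau\hookrightarrow\h_t$.

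The decisive and hardest step is asymptotic compactness, which cannot arise from parabolic smoothing because \eqref{1.1} is hyperbolic. I would therefore follow the technique of \cite{Pata1,Pata2} and split $z=z_L+z_N$, where $z_L$ solves the homogeneous linearized problem and, via the memory-dissipation above, decays exponentially in $\h_t$ uniformly in $t$, while $z_N$ absorbs the forcing $g$ and the subcritically growing nonlinearity $f(u)$. One then derives uniform $\h^1_t$-bounds on $z_N$: the $(u,v)$ fibers compactify by Rellich ($H^2\times H^1\hookrightarrow\hookrightarrow H^1\times H$), and the $\mathcal M^1_t$-bound on $\eta^t_N$ gives fiber-wise compactness $H^2\hookrightarrow\hookrightarrow H^1$, which, together with uniform tail control coming from the transport structure of $\eta^t$ and the integrability hypotheses $(M_6)$--$(M_8)$, furnishes compactness in $\mathcal M_t$. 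This simultaneously yields pullback asymptotic compactness and the regularity $A(t)\subset\h^1_t$ with $\sup_t\|A(t)\|_{\h^1_t}<\infty$, and invariance follows from the continuity of $U(t,\tau)$ in \eqref{2.6}. I expect the main obstacle to be precisely this compactification of the memory variable, since extra regularity of the history does not by itself compactify $\mathcal M_t$; resolving it requires the full strength of the integral-inequality machinery and the weighted estimates $(M_6)$--$(M_8)$ developed in \cite{Pata1,Pata2}.
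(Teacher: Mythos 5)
This lemma is quoted from \cite{Pata2} and the paper offers no proof of it; what the paper does reproduce (Section 4) are the building blocks of the proof in \cite{Pata2}, namely the absorbing set (Lemma \ref{45}), the integral inequalities (Lemmas \ref{41}--\ref{43}), and the two decompositions (Lemmas \ref{48} and \ref{410}). Your overall architecture --- absorbing family plus a splitting into an exponentially decaying part and a more regular part, with regularity of the attractor coming from the compact part --- is indeed the strategy of \cite{Pata2}. But your execution has a genuine gap at the decisive step: you claim a \emph{single} decomposition $z=z_L+z_N$, with $z_L$ solving the homogeneous linearized problem and $z_N$ absorbing all of $f(u)$ and acquiring uniform $\h^1_t$ bounds. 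With data only in $\h_\tau$, this cannot be closed for the cubic nonlinearity allowed by \eqref{2.1}: testing the $z_N$-equation with $A\partial_t w$ requires $f(u)\in H^1$, but from $u\in H^1\hookrightarrow L^6$ one only gets $f'(u)\nabla u\in L^{6/5}$, which is not $L^2$. This is precisely why \cite{Pata2} (and Section 4 here) proceeds by a bootstrap: first the splitting $f=f_0+f_1$ of \eqref{4.11'}, where the \emph{decaying} equation \eqref{4.13} carries the sign-definite supercritical part $f_0(v)$ (it is not linear homogeneous), yielding only an $\h^{1/3}_t$ bound on the compact part (Lemma \ref{48}, estimate \eqref{4.16}); then, once Lemma \ref{49} propagates uniform $\h^{1/3}_t$ bounds --- so that $u\in H^{4/3}\hookrightarrow L^{18}$ and $f'(u)\nabla u\in L^2$ --- the second splitting \eqref{fenjie} (which \emph{is} your linear/nonlinear decomposition) finally gives the $\h^1_t$ bound (Lemma \ref{410}). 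Your proposal collapses these two steps into one, and the collapsed version fails.

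A second, smaller flaw: your dissipativity argument is phrased as ``differentiating $\|\eta^t\|^2_{\mathcal M_t}$'' to produce $-\delta\kappa(t)\|\eta^t\|^2_{\mathcal M_t}$ and a differential inequality $\tfrac{\d}{\d t}\mathcal L+\omega\mathcal L\le C$. In the time-dependent memory setting this differentiation is exactly what is unavailable --- $\partial_t\eta$ is not even well defined, which is the central difficulty the papers \cite{Pata1,Pata2} were written to overcome --- and it must be replaced by the integral inequalities of Lemmas \ref{41}--\ref{43} (note also that weak solutions lack the regularity to justify such computations directly, whence the Galerkin approximation step in Lemma \ref{412}). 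You do invoke ``the integral-inequality machinery'' in your closing sentence, but the dissipativity step as you actually describe it uses the forbidden differential identity rather than Lemma \ref{42}.
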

\medskip

Now, we state the main results of the present paper, and its proof will be given in Section 5 after some delicate technique  preparations in Section 4.

\begin{theorem}\label{eaexist}Let Assumption \ref{22} be valid. Then  the process  $U(t, \tau): \h_\tau \rightarrow \h_t$ generated by problem \eqref{1.1}-\eqref{1.3} admits a time-dependent exponential attractor $\E=\{E(t)\}_{t\in\r}$, with $E(t)\subset \h_t^1$ for each $t\in\r$, and
\begin{equation*}
\sup_{t\in\r}\|E(t)\|_{\h^1_t}<\infty.
\end{equation*}
\end{theorem}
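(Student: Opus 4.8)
The plan is to verify the three structural hypotheses $(H_1)$–$(H_3)$ of Theorem \ref{ea} for the process $U(t,\tau)$ and then to read off Theorem \ref{eaexist} from Corollary \ref{ea2}. The required dissipativity is already encoded in Lemma \ref{gaexist}: the bounded invariant global attractor, together with the a priori estimates of Section 4, yields a uniformly time-dependent absorbing ball $\{\mathbb B_t(R_1)\}_{t\in\r}$ in $\h_t$, and moreover shows that orbits issuing from this ball are, after a finite time, uniformly bounded in the smoother space $\h_t^1$. The core construction is then a distinguished family $\b=\{B(t)\}_{t\in\r}$ that is bounded in $\h_t^1$ (hence contained in some $\mathbb B_t(R_2)$), closed in $\h_t$, forward invariant in the sense of \eqref{3.1}, and exponentially attracting as required by Corollary \ref{ea2}. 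I would realise $B(t)$ as the $\h_t$-closure of $\bigcup_{\tau\ge T_*}U(t,t-\tau)\mathbb B^1_{t-\tau}(R_*)$, where $\mathbb B^1_{t-\tau}(R_*)$ is an absorbing ball in $\h_{t-\tau}^1$ furnished by the smoothing estimates; the semigroup identity makes this union forward invariant, while the $\h_t^1$-smoothing keeps it bounded in $\h_t^1$.

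The Lipschitz condition $(H_2)$, and the analogous estimate \eqref{3.8} demanded by Corollary \ref{ea2}, follow at once from the continuous dependence bound \eqref{2.6} of Theorem \ref{existence}: on the fixed window $[0,T]$ and the uniformly bounded family $\b$, the factor $\mathcal Q(R)e^{\mathcal Q(R)T}$ is a finite constant $L_1$ independent of $t$, provided the two norms are compared through the embedding $\h_\tau\hookrightarrow\h_t$ of Remark \ref{remark21}(ii). Forward invariance $(H_1)$ then holds by the very construction of $\b$, once $T\ge T_*$.

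The decisive hypothesis is the quasi-stability estimate $(H_3)$, and here lies the main obstacle. Writing $w=u_1-u_2$ for the difference of two solutions issuing from $B(t-T)$, the Chueshov–Lasiecka scheme decomposes the one-step evolution over $[t-T,t]$ into an exponentially contractive part, bounded in $\h_t$ by $\eta\|x-y\|_{\h_{t-T}}$ with $\eta<1/2$ once $T$ is taken large enough for the dissipation integral inequalities of Section 4 to force the decay below $1/2$, plus a remainder controlled by a compact seminorm $n_Z(K_tx-K_ty)$. The difficulty is that the natural choice $Z=\h_t^1$ does \emph{not} embed compactly into $\h_t$: the memory factor $\mathcal M^1_t=L^2_{\mu_t}(\r^+;H^2)$ fails to be compactly embedded in $\mathcal M_t=L^2_{\mu_t}(\r^+;H^1)$, since the infinite history tail destroys compactness even though $H^2\hookrightarrow H^1$ is compact. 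My resolution is not to extract compactness from regularity of the full state, but to build $n_Z$ from two separate mechanisms: the displacement–velocity pair $(w,\partial_tw)$ is handled by the genuine compact embedding $H^2\times H^1\hookrightarrow\hookrightarrow H^1\times H$, whereas the memory component is split, its large-$s$ tail being made uniformly small through the dissipation condition $(M_4)$ together with $(M_8)$, and its bounded-$s$ window being compact thanks to the boundedness of $\mu_t$ near the origin from $(M_7)$. Packaging these into a single compact seminorm, while letting the contractive part absorb the non-compact remnant of the tail, is the delicate step that the authors announce as their new ingredient.

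With $(H_1)$–$(H_3)$ in hand, Theorem \ref{ea} produces a semi-invariant family $\E=\{E(t)\}_{t\in\r}$ with $E(t)\subset B(t)\subset\h_t^1$, uniformly bounded fractal dimension, and hence $\sup_{t\in\r}\|E(t)\|_{\h_t^1}<\infty$. Finally, since $\{\mathbb B_t(R_1)\}$ is a uniformly time-dependent absorbing ball and $\b$ attracts it exponentially by construction, Corollary \ref{ea2} upgrades the attraction \eqref{3.6} to the exponential attraction of every uniformly bounded family, so that $\E$ is the desired time-dependent exponential attractor.
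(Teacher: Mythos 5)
Your high-level skeleton --- verify $(H_1)$--$(H_3)$ for a regular, forward-invariant, exponentially attracting family $\b$ and conclude via Corollary \ref{ea2} --- is the same as the paper's, but the two steps that carry all the difficulty rest on a false premise or are left unproved. First, your construction of $\b$ invokes ``smoothing estimates'' and asserts that orbits issuing from the absorbing ball $\mathbb B_t(R_1)$ are, after finite time, uniformly bounded in $\h_t^1$. For this hyperbolic equation there is no smoothing: data in $\h_\tau\setminus\h^1_\tau$ never enter $\h^1_t$, so no ball of $\h^1$ absorbs $\mathbb B_\tau(R_1)$, and your claim that $\b$ attracts $\{\mathbb B_t(R_1)\}_{t\in\r}$ exponentially ``by construction'' is unsubstantiated --- your $B(t)$ is assembled only from images of $\h^1$-balls, and nothing in that construction attracts less regular data toward it. Verifying \eqref{3.7} is precisely the hard part, and the paper does it in Lemma \ref{52} by a two-step bootstrap with no smoothing available: the splitting $U=U_0+U_1$ with $U_0$ decaying in $\h_t$ and $U_1$ bounded in $\h^{1/3}_t$ (Lemma \ref{48}), a second splitting taking $\h^{1/3}$-data into bounded sets of $\h^1_t$ (Lemma \ref{410}), transitivity of exponential attraction through the intermediate space (estimate \eqref{5.8}, which needs the Lipschitz bound \eqref{2.6} to propagate semidistances), and the dissipative estimates of Lemmas \ref{49} and \ref{411} to make the triple intersection $\mathbb B_t(\mathcal R_3)\cap\mathbb B^{1/3}_t(\mathcal R_5)\cap\mathbb B^1_t(\mathcal R_7)$ forward invariant.

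Second, for $(H_3)$ you correctly identify that $\mathcal M^1_t$ does not embed compactly into $\mathcal M_t$, but your proposed remedy --- a state-space compact seminorm built by splitting the memory variable into a tail made small by $(M_4)$, $(M_8)$ and a bounded-$s$ window made compact by $(M_7)$, with the contraction ``absorbing the non-compact remnant'' --- is exactly the step that requires an estimate, and it is doubtful as stated: for differences of solutions emanating from $B(t-T)$, uniform tail smallness in $\mathcal M_t$ does not follow from the structural conditions on $\mu_t$ alone; any such control must go through the representation formula \eqref{1.2}, which ties $\bar\eta^t$ to the history of $\bar u$. The paper sidesteps state-space compactness entirely: Lemma \ref{412} proves the quasi-stability inequality
\begin{equation*}
\|U(t,\tau)z_{1\tau}-U(t,\tau)z_{2\tau}\|^2_{\h_t}\leq Ce^{-\kappa(t-\tau)}\|z_{1\tau}-z_{2\tau}\|^2_{\h_\tau}+\mathcal Q(R)\,e^{t-\tau}\int_\tau^t\|\bar u(s)\|^2\,\d s,
\end{equation*}
via a further decomposition into a linear contracting part $\bar v$ and a remainder $\bar w$ controlled, memory component included, by the time-integral of $\|\bar u\|$; the compact seminorm then lives on the trajectory space $Z=\{u\in L^2(0,T;H^1)\ |\ \partial_t u\in L^2(0,T;H)\}$ with $n_Z(u)=\mathcal Q(\mathcal R_0+T)\|u\|_{L^2(0,T;H)}$, compact by Simon's theorem, and $K_t$ sends initial data to the $u$-trajectory over $[t-T,t]$. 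Until you either supply an analogue of Lemma \ref{412} or an actual proof of your tail-splitting compactness claim, hypothesis $(H_3)$ --- and with it the whole theorem --- is not established.
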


\begin{corollary} Let   Assumption \ref{22} be valid. Then the fractal dimension of the  invariant time-dependent global attractor $\a=\{A(t)\}_{t\in\r}$ given by Lemma \ref{gaexist} is uniformly bounded, that is,
\begin{equation*}
  \sup_{t\in\r} \dim_f\left( A(t); \h_t\right)\leq     \sup_{t\in\r} \dim_f\left( E(t); \h_t\right) <+\infty.
    \end{equation*}
\end{corollary}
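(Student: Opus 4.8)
The plan is to deduce the inclusion $A(t)\subset E(t)$ for every $t\in\r$ from the minimality of the time-dependent global attractor, and then to conclude by the monotonicity of the fractal dimension under set inclusion. The key point is that the exponential attractor $\E=\{E(t)\}_{t\in\r}$ produced by Theorem \ref{eaexist} is, in particular, a compact pullback attracting family, hence a legitimate competitor in the minimality clause that characterizes $\a$.

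First I would check that $\E$ satisfies the two structural requirements (i) and (ii) in the definition of the time-dependent global attractor. Compactness of each section $E(t)$ in $\h_t$ is part (i) of the definition of the time-dependent exponential attractor, so requirement (i) holds. For requirement (ii), fix $t\in\r$ and an arbitrary uniformly bounded family $\cald=\{D(t)\}_{t\in\r}$; writing $s=t-\tau$, the exponential attraction estimate (part (iii) of the definition of the time-dependent exponential attractor, which $\E$ enjoys by Theorem \ref{eaexist}) reads
\begin{equation*}
\mathrm{dist}_{\h_t}\left(U(t,\tau)D(\tau), E(t)\right)=\mathrm{dist}_{\h_t}\left(U(t,t-s)D(t-s), E(t)\right)\leq C(\cald)\,e^{-\beta s},
\end{equation*}
valid for $s\geq\tau(\cald)$. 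Since $\tau\to-\infty$ is equivalent to $s\to+\infty$, the right-hand side tends to $0$, so $\lim_{\tau\to-\infty}\mathrm{dist}_{\h_t}(U(t,\tau)D(\tau), E(t))=0$ for every fixed $t$; together with the uniform boundedness of $\E$ (guaranteed by $\sup_{t\in\r}\|E(t)\|_{\h^1_t}<\infty$ and the embedding $\h^1_t\hookrightarrow\h_t$) this is exactly requirement (ii). As $\a$ is the smallest family enjoying (i) and (ii), the minimality clause forces
\begin{equation*}
A(t)\subset E(t),\quad\forall\, t\in\r.
\end{equation*}

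Finally I would invoke the monotonicity of the fractal dimension: from $A(t)\subset E(t)$ one has $N_t(A(t),\e)\leq N_t(E(t),\e)$ for every $\e>0$ (any cover of $E(t)$ covers $A(t)$), whence $\dim_f(A(t);\h_t)\leq\dim_f(E(t);\h_t)$ for each $t$; taking the supremum over $t$ and using the uniform bound $\sup_{t\in\r}\dim_f(E(t);\h_t)<\infty$ furnished by Theorem \ref{eaexist} yields the claimed chain of inequalities. I do not anticipate a genuine analytic obstacle here, since both the inclusion and the dimension monotonicity are soft; the only point demanding care is reconciling the two notions of attraction — the exponential estimate is phrased with the lag in $U(t,t-\tau)$ tending to $+\infty$, whereas the global attractor definition uses the initial time tending to $-\infty$ — which is precisely handled by the change of variables $s=t-\tau$ displayed above.
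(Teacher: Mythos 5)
Your proposal is correct. The paper states this corollary without proof (treating it as an immediate consequence of Theorem \ref{eaexist}), and your argument is precisely the standard one it implicitly relies on: $\E$ is compact, uniformly bounded, and pullback attracting (after the change of variables $s=t-\tau$ reconciling the two notions of attraction, as you note), so the minimality clause in the definition of the time-dependent global attractor gives $A(t)\subset E(t)$, and monotonicity of covering numbers under inclusion finishes the proof. As a side remark, one could equally well avoid invoking minimality by using the invariance of $\a$ stated in Lemma \ref{gaexist}: since $A(t)=U(t,t-s)A(t-s)$ and $\a$ is uniformly bounded, the exponential attraction estimate gives $\mathrm{dist}_{\h_t}(A(t),E(t))\leq C e^{-\beta s}\to 0$, whence $A(t)\subset E(t)$ because $E(t)$ is closed; both routes are equally sound here.
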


\section{Some key estimates}

We first quote some known results (Lemmas \ref{41} to \ref{48}) coming from literature \cite{Pata1,Pata2}, which will be the stating point of our argument.

\begin{lemma}\label{41}\cite{Pata2}  (Gronwall-type lemma in integral form)  Let $\tau\in \r$ be fixed, and   $\Lambda: [\tau, \infty)\rightarrow \r$ be a continuous function. Assume that,  for some $\e>0$ and every $b>a\geq \tau$,
\begin{equation*}
\Lambda(b)+2\e \int_a^b \Lambda(y)\d y\leq \Lambda(a)+\int_a^b q_1(y)\Lambda(y)\d y +\int_a^b q_2(y) \d y,
\end{equation*}
  where $q_1, q_2$ are locally summable nonnegative functions on $[\tau, \infty)$ satisfying
\begin{equation*}
\int_a^b q_1(y)\d y\leq \e(b-a)+c_1\ \ \hbox{and}\ \ \sup_{t\geq \tau}\int_t^{t+1} q_2(y) \d y\leq c_2,
\end{equation*}
for some $c_1, c_2\geq 0$. Then, we have
\begin{equation*}
\Lambda(t)\leq e^{c_1}\Big[\left|\Lambda(\tau)\right|e^{-\e(t-\tau)}+\frac{c_2e^\e}{1-e^{-\e}}\Big],\ \ \forall t\geq \tau.
\end{equation*}
\end{lemma}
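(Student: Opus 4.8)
The plan is to read the hypothesis as the integrated form of the linear differential inequality $\Lambda'(t)\le\big(q_1(t)-2\e\big)\Lambda(t)+q_2(t)$ and to run the classical integrating-factor argument with the weight
\[
\rho(t)=\exp\!\Big(\int_\tau^t\big(2\e-q_1(y)\big)\,\d y\Big),
\]
which is positive and locally absolutely continuous since $q_1\in L^1_{\mathrm{loc}}$. Formally one expects $\frac{\d}{\d t}\big(\rho(t)\Lambda(t)\big)\le\rho(t)q_2(t)$, hence, integrating from $\tau$ to $t$ and using $\rho(\tau)=1$,
\[
\Lambda(t)\le\frac{\Lambda(\tau)}{\rho(t)}+\frac{1}{\rho(t)}\int_\tau^t\rho(y)\,q_2(y)\,\d y.
\]
First I would establish this master inequality rigorously, and then estimate the two terms on the right using the two structural bounds on $q_1$ and $q_2$.

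The estimates are the routine part. From $\int_a^b q_1\le\e(b-a)+c_1$ one gets $\frac{\rho(y)}{\rho(t)}=\exp\big(-2\e(t-y)+\int_y^t q_1\big)\le e^{c_1}e^{-\e(t-y)}$ for $\tau\le y\le t$, and in particular $\frac1{\rho(t)}\le e^{c_1}e^{-\e(t-\tau)}$. Inserting these, the first term is bounded by $e^{c_1}e^{-\e(t-\tau)}\Lambda(\tau)$, which is $\le e^{c_1}e^{-\e(t-\tau)}|\Lambda(\tau)|$ regardless of the sign of $\Lambda(\tau)$ (if $\Lambda(\tau)<0$ the term is already negative). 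The second term is bounded by $e^{c_1}\int_\tau^t e^{-\e(t-y)}q_2(y)\,\d y$; covering $[\tau,t]$ from the right endpoint by the unit intervals $[t-k-1,\,t-k]$, on which $e^{-\e(t-y)}\le e^{-\e k}$ and $\int q_2\le c_2$ by the hypothesis $\sup_s\int_s^{s+1}q_2\le c_2$, and summing the geometric series $\sum_{k\ge0}e^{-\e k}=(1-e^{-\e})^{-1}$, one obtains the constant $c_2(1-e^{-\e})^{-1}$, which is the asserted $\frac{c_2e^{\e}}{1-e^{-\e}}$ up to the harmless factor $e^{\e}$ produced by the partition. Collecting the two bounds yields exactly the stated estimate.

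The one genuine obstacle is that $\Lambda$ is assumed only \emph{continuous}, so the integrating-factor computation cannot be carried out by naive differentiation. I would resolve this by working at the level of the integral inequality itself. Rewriting the hypothesis as $\Lambda(b)-\Lambda(a)\le\int_a^b\big[(q_1-2\e)\Lambda+q_2\big]\,\d y$ for all $a\le b$ shows that the continuous function $W(t):=\Lambda(t)-\int_\tau^t\big[(q_1-2\e)\Lambda+q_2\big]\,\d y$ is non-increasing. Thus $\Lambda=W+\Phi$ with $W$ continuous and monotone and $\Phi$ absolutely continuous, so $\rho\Lambda=\rho W+\rho\Phi$ is of bounded variation, and its Lebesgue--Stieltjes differential is $\d(\rho\Lambda)=\rho\,q_2\,\d t+\rho\,\d W$: the product-rule terms carrying $(q_1-2\e)\Lambda$ cancel by construction, and $\rho\,\d W\le0$ since $\rho>0$ and $W$ is non-increasing. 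Integrating this differential inequality gives $\rho(t)\Lambda(t)\le\Lambda(\tau)+\int_\tau^t\rho q_2$, which is precisely the master inequality needed. After that only the elementary estimates above remain, so the decomposition $\Lambda=W+\Phi$ is the conceptual heart of the argument.
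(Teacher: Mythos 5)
The paper offers no proof of this lemma for you to be compared against: it is quoted directly from \cite{Pata2}, so your argument can only be judged on its own terms --- and on those terms it is correct. The difficulty you isolate is exactly the right one: $\Lambda$ is merely continuous, so the integrating-factor computation needs justification, and your resolution --- writing $\Lambda = W + \Phi$ with $W(t) = \Lambda(t) - \int_\tau^t\big[(q_1(y) - 2\epsilon)\Lambda(y) + q_2(y)\big]\,\mathrm{d}y$ non-increasing (this is precisely what the integral hypothesis asserts) and $\Phi$ locally absolutely continuous --- is sound: $\rho\Lambda$ is then continuous and of locally bounded variation, the product rule for continuous BV functions applies with no atomic correction terms, the $(q_1-2\epsilon)\Lambda$ contributions cancel by construction, and $\rho\,\mathrm{d}W \leq 0$ since $\rho>0$, giving the master inequality $\rho(t)\Lambda(t) \leq \Lambda(\tau) + \int_\tau^t \rho(y)\, q_2(y)\,\mathrm{d}y$. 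The subsequent estimates are also correct: $\rho(y)/\rho(t) \leq e^{c_1}e^{-\epsilon(t-y)}$ follows from the hypothesis on $q_1$, the sign of $\Lambda(\tau)$ is properly handled, and the unit-interval covering of the convolution term yields the constant $c_2(1-e^{-\epsilon})^{-1}$, which is in fact slightly sharper than the stated $c_2e^{\epsilon}(1-e^{-\epsilon})^{-1}$, so the lemma follows a fortiori. Two cosmetic points only: (i) for the leftmost, possibly partial, interval of the covering you should note that it is contained in $[\tau,\tau+1]$, since the hypothesis $\sup_{t\geq\tau}\int_t^{t+1}q_2(y)\,\mathrm{d}y\leq c_2$ is only available for intervals with left endpoint $\geq\tau$; (ii) your remark that the factor $e^{\epsilon}$ is ``produced by the partition'' is backwards --- your partition does not produce it, your bound is simply smaller than the asserted one, which is harmless.
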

\medskip

 We consider the following problem
\begin{align}
&\partial_{tt}p(t)+Ap(t)+\int_0^\infty\mu_t(s)A\psi^t(s)\d s+\gamma(t)=0,\ \   t>\tau,  \label{4.1}\\
& \psi^t(s)={\left\{
              \begin{array}{ll}
                p(t)-p(t-s), & s\leq t-\tau, \\
                \psi_\tau(s-t+\tau)+p(t)-p_\tau, & s>t-\tau,
              \end{array}
            \right.}\label{4.2}\\
&p(\tau)=p_\tau, \ \ \partial_t p(\tau)=q_\tau, \ \ \psi^\tau=\psi_\tau,\label{4.3}
\end{align}
where $\gamma$ is a certain forcing term (possibly depending on $p$) and  $\left(p_\tau, q_\tau, \psi_\tau\right)\in\h_\tau$. Assuming that problem \eqref{4.1}-\eqref{4.3} admits a global solution
$\left(p(t), \partial_t p(t), \psi^t\right)\in \h_t$ for all $t\in [\tau, \infty)$.

\begin{lemma}\label{42}\cite{Pata1} Let Assumption \ref{22}: $(ii)$ be valid. For any fixed $\sigma\in \r$ and every $T>\tau\in\r$, if also
\begin{equation*}
p\in W^{1, \infty}\left(\tau, T; H^{\sigma+1}\right)\ \  \hbox{and}\ \ \psi_\tau\in \mathcal M ^\sigma_\tau,
\end{equation*}
then for all $\tau\leq a\leq b\leq T$,
\begin{equation*}
\|\psi^b\|^2_{\mathcal M^\sigma_b}-\int_a^b\int_0^\infty\left[\partial_t \mu_t(s)+ \partial_s\mu_t(s)\right]\|\psi^t(s)\|^2_{\sigma+1}\d s\d t\leq \|\psi^a\|^2_{\mathcal M^\sigma_a} + 2\int_a^b \langle \partial_t p(t), \psi^t\rangle_{\mathcal M^\sigma_t}\d t.
\end{equation*}
\end{lemma}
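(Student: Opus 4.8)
The plan is to treat $\psi^t$ through the transport equation it inherits from the representation formula \eqref{4.2}, and then to run an energy computation on the time-dependent weighted norm $\|\psi^t\|^2_{\mathcal M^\sigma_t}$, converting the $s$-derivative by integration by parts. Because of the subtleties attached to the $t$-dependence of the space $\mathcal M^\sigma_t$, I would establish the estimate first for smooth data, where every manipulation is classical, and then extend it to the stated regularity class by density.

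First I would record the transport equation. Differentiating \eqref{4.2} separately in the two regions $s<t-\tau$ and $s>t-\tau$ shows that
\[
\partial_t\psi^t(s)+\partial_s\psi^t(s)=\partial_t p(t),
\]
with the two branches matching continuously across the interface $s=t-\tau$, and moreover $\psi^t(0)=p(t)-p(t)=0$. The regularity $p\in W^{1,\infty}(\tau,T;H^{\sigma+1})$ together with $\psi_\tau\in\mathcal M^\sigma_\tau$ and the domination $(M_2)$ guarantees that $\psi^t\in\mathcal M^\sigma_t$ for every $t\in[\tau,T]$, so all integrals below are finite.

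Next comes the energy step. Differentiating the weighted norm in $t$, the derivative falls both on the weight and on the integrand, giving
\[
\frac{\d}{\d t}\|\psi^t\|^2_{\mathcal M^\sigma_t}=\int_0^\infty\partial_t\mu_t(s)\|\psi^t(s)\|^2_{\sigma+1}\d s+2\int_0^\infty\mu_t(s)\langle\partial_t\psi^t(s),\psi^t(s)\rangle_{\sigma+1}\d s.
\]
Inserting the transport equation into the second integral and integrating by parts in $s$, the boundary term at $s=0$ vanishes because $\psi^t(0)=0$, while the boundary term at $s=\infty$ equals $-\lim_{s\to\infty}\mu_t(s)\|\psi^t(s)\|^2_{\sigma+1}\le 0$; discarding this nonpositive contribution yields
\[
\frac{\d}{\d t}\|\psi^t\|^2_{\mathcal M^\sigma_t}\le\int_0^\infty\big[\partial_t\mu_t(s)+\partial_s\mu_t(s)\big]\|\psi^t(s)\|^2_{\sigma+1}\d s+2\langle\partial_t p(t),\psi^t\rangle_{\mathcal M^\sigma_t}.
\]
Integrating this differential inequality over $[a,b]$ and rearranging produces precisely the claimed estimate; it is exactly the discarded limit at $s=\infty$ that turns the underlying identity into the stated inequality.

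The main obstacle is rigor rather than the formal computation: since the memory space $\mathcal M^\sigma_t$ itself varies with $t$, neither $\partial_t\psi^t$ nor the differentiation under the integral sign is a priori meaningful — this is precisely the difficulty for time-dependent kernels flagged in the introduction. I would resolve it by first taking $p$ smooth and $\psi_\tau$ smooth with compact support in $s$, so that $\psi^t(s)$ vanishes for large $s$, the limit at $s=\infty$ is genuinely zero, and each step is classical; the bounds $(t,s)\mapsto\mu_t(s),\,\partial_t\mu_t(s)\in L^\infty(\mathcal K)$ from $(M_3)$ legitimize differentiation under the integral on the compact interval $[a,b]$. One then passes to general data $p\in W^{1,\infty}(\tau,T;H^{\sigma+1})$, $\psi_\tau\in\mathcal M^\sigma_\tau$ by approximation, using the embedding $\mathcal M^\sigma_\tau\hookrightarrow\mathcal M^\sigma_t$ of Remark \ref{remark21}(ii) to control the approximation uniformly in $t$ and the lower semicontinuity of the norm to preserve the inequality in the limit.
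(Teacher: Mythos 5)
A point of reference first: the present paper does not prove Lemma \ref{42} at all --- it is quoted from \cite{Pata1}, and the proof given there deliberately never forms $\partial_t\psi^t$. Instead, the inequality is extracted from finite differences of $t\mapsto\|\psi^t\|^2_{\mathcal M^\sigma_t}$ computed directly from the representation formula \eqref{4.2}, using that $s\mapsto\mu_t(s)$ is nonincreasing (so a translation in $s$ can only decrease the weighted norm) together with Fatou-type limit passages; this is precisely the ``integral inequalities instead of differential ones'' philosophy that the introduction of this paper attributes to \cite{Pata1}. Your route --- transport equation, energy computation, integration by parts in $s$, then regularization --- is the formal differential argument which that philosophy was designed to replace. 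It can in principle be made rigorous, but as written it has a genuine gap.

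The gap is your assertion that the two branches of \eqref{4.2} ``match continuously across the interface $s=t-\tau$''. This is false in general: the left limit is $p(t)-p_\tau$, the right limit is $\psi_\tau(0^+)+p(t)-p_\tau$, so $\psi^t$ carries a jump of size $\psi_\tau(0^+)$ that travels along the characteristic $s=t-\tau$. This is not cosmetic. With a discontinuity whose location moves with $t$, you may not differentiate $\int_0^\infty\mu_t(s)\|\psi^t(s)\|^2_{\sigma+1}\,\d s$ under the integral sign (the moving jump contributes a Leibniz/Rankine--Hugoniot term $\mu_t(t-\tau)\bigl[\|\psi^t((t-\tau)^-)\|^2_{\sigma+1}-\|\psi^t((t-\tau)^+)\|^2_{\sigma+1}\bigr]$), and your integration by parts in $s$ likewise produces a jump term of indefinite sign at $s=t-\tau$; your computation silently drops both. (They in fact cancel each other, but the argument as written never sees either.) The repair inside your own scheme is easy and should be stated: choose the approximating data $\psi_{\tau,n}$ smooth and supported away from $s=0$ --- such functions are dense in $\mathcal M^\sigma_\tau$ because $\mu_\tau(s)\,\d s$ is a finite measure --- so that $\psi_{\tau,n}(0)=0$ and the interface jump is absent; then each step is classical, and your limit passage (which is sound, given $(M_2)$ for convergence at $t=a,b$, $(M_6)$ for domination of the $\partial_t\mu_t$ term, and Fatou applied to the nonnegative weight $-[\partial_t\mu_t+\partial_s\mu_t]$ guaranteed by $(M_4)$) finishes the proof. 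A second, smaller error: compact support of $\psi_\tau$ does \emph{not} make $\psi^t(s)$ vanish for large $s$; by \eqref{4.2} one has $\psi^t(s)\to p(t)-p_\tau$ as $s\to\infty$. The boundary term at infinity dies for a different reason, namely $\mu_t(s)\to 0$ (as $\mu_t$ is nonincreasing and summable) while $\|\psi^t(s)\|_{\sigma+1}$ stays bounded; in particular, for regular data that term is exactly zero, so the inequality in the lemma does not originate from ``the discarded limit at $s=\infty$'' but from the jump/Fatou bookkeeping above.
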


\begin{lemma}\label{43}\cite{Pata2} Let Assumption \ref{22}: $(ii)$ be valid, and  the global solution $\left(p(t), \partial_t p(t), \psi^t\right)$ of problem \eqref{4.1}-\eqref{4.3} be sufficiently regular and let the functionals
\begin{align}
   &\Phi(t)=2\langle p(t), \partial_t p(t)\rangle,\label{4.6}\\
   & \Psi(t)= -\frac{2}{\kappa(t)}\int_0^\infty \mu_t(s)\langle \psi^t(s),  \partial_t p(t)\rangle\d s.\label{4.7}
\end{align}
Then, for every $\varpi\in (0, 1]$ and every $b>a\geq \tau$, we have
\begin{align}
\Phi(b)+(2-\varpi)\int_a^b\|p(t)\|^2_1\d t\leq\ & \Phi(a)+2 \int_a^b\|\partial_t p(t)\|^2\d t \label{4.4}\\
&\ +\frac{1}{\varpi}\int_a^b\kappa(t)\|\psi^t\|^2_{\mathcal M_t}\d t-2\int_a^b\langle \gamma(t), p(t)\rangle \d t,\nonumber
\end{align}
and
\begin{align}
\Psi(b)+\int_a^b\|\partial_tp(t)\|^2\d t\leq\ & \Psi(a)-M\int_a^b\int_0^\infty\left[\partial_t\mu_t(s)+ \partial_s\mu_t(s)\right]\|\psi^t(s)\|^2_1\d s\d t\nonumber\\
&\  +\varpi\int_a^b\|p(t)\|^2_1\d t +\frac{C}{\varpi}\int_a^b\kappa(t)\|\psi^t\|^2_{\mathcal M_t}\d t\label{4.5}\\
&\   +\int_a^b \frac{2}{\kappa(t)}\int_0^\infty\mu_t(s)\langle \psi^t(s), \gamma(t)\rangle \d s\d t,\nonumber
\end{align}
 where  $M$ and $C$ are positive constants depending only on the structural assumptions on the memory kernel.
\end{lemma}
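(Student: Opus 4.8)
The plan is to derive the two inequalities \eqref{4.4} and \eqref{4.5} as integrated forms of differential identities for the auxiliary functionals $\Phi$ and $\Psi$: \eqref{4.4} is a standard energy multiplier estimate, while \eqref{4.5} is considerably more delicate because it must handle the time-dependence of the memory measure $\mu_t$ rigorously.

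For \eqref{4.4} I would differentiate $\Phi(t)=2\langle p(t),\partial_t p(t)\rangle$ and insert $\partial_{tt}p$ from \eqref{4.1}. Using $\langle p,Ap\rangle=\|p\|_1^2$ and $\langle p,A\psi^t(s)\rangle=\langle p,\psi^t(s)\rangle_1$, this gives
\begin{equation*}
\Phi'(t)=2\|\partial_t p\|^2-2\|p\|_1^2-2\int_0^\infty\mu_t(s)\langle p,\psi^t(s)\rangle_1\,\d s-2\langle\gamma(t),p(t)\rangle.
\end{equation*}
The memory cross-term is then controlled by Cauchy--Schwarz together with the $\kappa(t)$-rescaled Young inequality $2\mu_t(s)\|p\|_1\|\psi^t(s)\|_1\leq \frac{\varpi}{\kappa(t)}\mu_t(s)\|p\|_1^2+\frac{\kappa(t)}{\varpi}\mu_t(s)\|\psi^t(s)\|_1^2$; integrating in $s$ turns the first piece into $\varpi\|p\|_1^2$ (the factor $\kappa(t)$ cancels against $1/\kappa(t)$) and the second into $\frac{\kappa(t)}{\varpi}\|\psi^t\|_{\mathcal M_t}^2$. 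Integrating the resulting differential inequality over $[a,b]$ and rearranging produces exactly \eqref{4.4}.

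For \eqref{4.5} I would differentiate $\Psi(t)=-\frac{2}{\kappa(t)}\int_0^\infty\mu_t(s)\langle\psi^t(s),\partial_t p(t)\rangle\,\d s$, which yields a normalization term $\frac{2\kappa'(t)}{\kappa^2(t)}\int_0^\infty\mu_t(s)\langle\psi^t(s),\partial_t p\rangle\,\d s$ plus $-\frac{2}{\kappa(t)}$ times the time-derivative of the integral. In the latter I would use the Dafermos-type relation $\partial_t\psi^t=-\partial_s\psi^t+\partial_t p$ (which follows by differentiating \eqref{4.2}) and substitute $\partial_{tt}p$ from \eqref{4.1}. Three structural contributions emerge: the term $\int_0^\infty\mu_t(s)\|\partial_t p\|^2\,\d s=\kappa(t)\|\partial_t p\|^2$ supplies, after multiplication by $-2/\kappa(t)$, the good kinetic term $-2\|\partial_t p\|^2$; an integration by parts in $s$ of $-\int_0^\infty\mu_t(s)\langle\partial_s\psi^t(s),\partial_t p\rangle\,\d s$, using $\psi^t(0)=0$ and the decay of $\mu_t$ at infinity, transfers the $s$-derivative onto $\mu_t$; and the convolution self-interaction $-\big\|\int_0^\infty\mu_t(s)\psi^t(s)\,\d s\big\|_1^2\leq 0$ appears, which is bounded via Cauchy--Schwarz and $(M_5)$ by $\kappa(t)\|\psi^t\|_{\mathcal M_t}^2$ and absorbed into the $\frac{C}{\varpi}$-term. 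The remaining pieces carry the forcing $\gamma$ (reproducing the last term of \eqref{4.5} with the correct sign), the weight derivatives $\partial_t\mu_t,\partial_s\mu_t$ paired with $\langle\psi^t,\partial_t p\rangle$, and the factor $\kappa'(t)/\kappa^2(t)$.

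The main obstacle is the rigorous justification of these time-differentiations of memory quantities: because the weighted spaces $\mathcal M_t$ genuinely vary with $t$, neither $\partial_t\psi^t$ nor $\frac{d}{dt}\|\psi^t\|_{\mathcal M_t}^2$ has a pointwise meaning, and this is precisely where Lemma \ref{42} enters. Its integral inequality is the rigorous surrogate that isolates the dissipation $-\int_a^b\int_0^\infty[\partial_t\mu_t(s)+\partial_s\mu_t(s)]\|\psi^t(s)\|_1^2\,\d s\,\d t$, whose sign is governed by $(M_4)$, so that the entire derivation is carried out at the level of integral inequalities over $[a,b]$ rather than differential ones. The cross-terms containing $\partial_t\mu_t$, $\partial_s\mu_t$ and $\kappa'(t)/\kappa^2(t)$ are then estimated by Poincar\'e and Young's inequality, splitting off an arbitrarily small multiple of $\|\partial_t p\|^2$ (absorbed by $-2\|\partial_t p\|^2$, leaving the net $+\int_a^b\|\partial_t p\|^2\,\d t$ on the left) and remainders of the form $-M\int_a^b\int_0^\infty[\partial_t\mu_t+\partial_s\mu_t]\|\psi^t\|_1^2\,\d s\,\d t$ and $\frac{C}{\varpi}\int_a^b\kappa(t)\|\psi^t\|_{\mathcal M_t}^2\,\d t$. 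The uniform bounds $(M_5)$, $(M_6)$ and $(M_7)$ are exactly what make the constants $M$ and $C$ independent of $t$, depending only on the structural assumptions on the kernel.
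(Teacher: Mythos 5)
You should first be aware that this paper never proves Lemma \ref{43}: it is one of the results explicitly quoted from \cite{Pata2} (``we first quote some known results (Lemmas \ref{41} to \ref{48})''), so there is no internal proof to compare against; your proposal has to be judged against the standard argument of that reference, which it essentially reproduces. Your treatment of \eqref{4.4} is exactly right: differentiating $\Phi$, substituting \eqref{4.1}, controlling $2\int_0^\infty\mu_t(s)\langle p,\psi^t(s)\rangle_1\d s$ by the $\kappa(t)$-weighted Young inequality, and integrating on $[a,b]$ gives \eqref{4.4} verbatim. The scheme you describe for \eqref{4.5} --- differentiate $\Psi$, use $\partial_t\psi^t=\partial_tp-\partial_s\psi^t$, substitute the equation, integrate by parts in $s$ (the boundary terms vanish since $\psi^t(0)=0$ and $\mu_t$ decays), and estimate the cross terms via $(M_4)$--$(M_7)$, Poincar\'e and Young --- is also the right one.

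Three points need correction, none of which destroys the argument but all of which matter. (a) The convolution self-interaction enters with the opposite sign to the one you claim: substituting $\partial_{tt}p=-Ap-\int_0^\infty\mu_t(\sigma)A\psi^t(\sigma)\d\sigma-\gamma$ into $-\frac{2}{\kappa(t)}\int_0^\infty\mu_t(s)\langle\psi^t(s),\partial_{tt}p\rangle\d s$ yields $+\frac{2}{\kappa(t)}\big\|\int_0^\infty\mu_t(s)\psi^t(s)\d s\big\|_1^2\ge 0$; it cannot be dropped by sign, and it is handled only by the estimate you list as a fallback (Cauchy--Schwarz giving the bound $\kappa(t)\|\psi^t\|_{\mathcal M_t}^2$, then $(M_5)$ and $\varpi\le 1$ to absorb it into the $C\varpi^{-1}$ term). (b) Your enumeration of the terms produced by the substitution omits $+\frac{2}{\kappa(t)}\int_0^\infty\mu_t(s)\langle\psi^t(s),p\rangle_1\d s$; this is precisely the source of $\varpi\int_a^b\|p(t)\|_1^2\d t$ on the right-hand side of \eqref{4.5} (same weighted Young inequality as in \eqref{4.4}, plus $(M_5)$), so as written your derivation cannot produce that term. (c) Lemma \ref{42} is not an ingredient of this proof, and the dissipation integral in \eqref{4.5} is not ``isolated'' by it: the $\partial_t\mu_t$ cross term and the integrated-by-parts $\partial_s\mu_t$ term combine into $-\frac{2}{\kappa(t)}\int_0^\infty[\partial_t\mu_t(s)+\partial_s\mu_t(s)]\langle\psi^t(s),\partial_tp\rangle\d s$, and since $(M_4)$ fixes the sign of $\partial_t\mu_t+\partial_s\mu_t$, Cauchy--Schwarz with respect to the measure $-[\partial_t\mu_t(s)+\partial_s\mu_t(s)]\d s$, together with $(M_6)$--$(M_7)$ (which give $\int_0^\infty-[\partial_t\mu_t+\partial_s\mu_t]\d s\le|\kappa'(t)|+\mu_t(0)\le C[\kappa(t)]^2$), Poincar\'e and Young, bounds it by $\epsilon\|\partial_tp\|^2-M\int_0^\infty[\partial_t\mu_t(s)+\partial_s\mu_t(s)]\|\psi^t(s)\|_1^2\d s$; this, with the similarly handled $\kappa'(t)[\kappa(t)]^{-2}$ term, is where the constant $M$ comes from. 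The legitimacy of differentiating $\Psi$ at all is settled by the hypothesis that the solution is sufficiently regular --- the paper's remark after the lemma stresses it is only ever applied to Galerkin approximations --- not by Lemma \ref{42}; in this paper Lemma \ref{42} is a separate integral inequality that is added to \eqref{4.4}--\eqref{4.5} only later, in the proofs of Lemmas \ref{49}, \ref{411} and \ref{412}.
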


\begin{remark}  (i) In the following in our arguments, Lemma \ref{43} is always used to the Galerkin approximations which are of enough regularity.

(ii)  By conditions $(M_1)$, $(M_4)$-$(M_5)$  one  easily sees that
\begin{equation}\label{4.8}
\left|\Phi(t)\right|+ \left|\Psi(t)\right|\leq C\|\left(p(t), \partial_t p(t), \psi^t\right)\|^2_{\h_t}, \ \ \forall t\geq \tau.
\end{equation}
\end{remark}
\medskip

\begin{lemma}\label{45}\cite{Pata2} Let Assumption \ref{22} be valid,  and $z_\tau\in \h_\tau$  with $\|z_\tau\|_{\h_\tau}\leq R$.  Then there exist positive constants $\omega $ and $R_0 $, which are independent of $R$,   such that
\begin{equation*}
\E(t, \tau):=\frac12\|U(t, \tau)z_\tau\|^2_{\h_t}\leq \mathcal Q(R) e^{-\omega(t-\tau)}+ R_0,\ \ \forall t\geq \tau.
\end{equation*}
That is,  the family  $\{\mathbb B_t(R_1)\}_{t\in\r}$, with $R_1>\sqrt{2R_0}$  is a   uniformly time-dependent absorbing set of the process $U(t, \tau)$.
\end{lemma}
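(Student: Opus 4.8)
The plan is to build a Lyapunov functional from the natural energy and the auxiliary functionals $\Phi,\Psi$ of Lemma \ref{43}, reduce everything to a single scalar integral inequality, and then close with the Gronwall-type Lemma \ref{41}. Writing $z(t)=U(t,\tau)z_\tau=\left(u(t),\partial_t u(t),\eta^t\right)$, I would first introduce the modified energy
\[
\mathcal E_0(t)=\tfrac12\|z(t)\|^2_{\h_t}+\langle F(u(t)),1\rangle-\langle g,u(t)\rangle,\qquad F(u)=\int_0^u f(r)\,\d r,
\]
and check that it is equivalent to $\|z(t)\|^2_{\h_t}$ up to additive constants. The bound $|f''(u)|\le c(1+|u|)$ in \eqref{2.1} gives $|F(u)|\le C(1+|u|^4)$, controllable through the embedding $H^1(\Omega)\hookrightarrow L^6(\Omega)$ in $\Omega\subset\r^3$, while $\liminf_{|u|\to\infty}f'(u)>-\lambda_1$ together with the Poincar\'e inequality yields $\langle F(u),1\rangle\ge-\tfrac{1-\nu}{2}\|u\|_1^2-c_0$ for some $\nu\in(0,1)$, and the term $\langle g,u\rangle$ is absorbed by Young's inequality. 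Thus it suffices to prove exponential decay of $\mathcal E_0$ towards a bounded set.

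Next I would test \eqref{1.1} against $\partial_t u$ at the Galerkin level (cf.\ the remark after Lemma \ref{43}) and combine the resulting identity with the memory estimate of Lemma \ref{42} applied to $p=u$, $\psi=\eta$ and forcing $\gamma=f(u)-g$; the memory reaction term cancels, leaving
\[
\mathcal E_0(b)-\tfrac12\int_a^b\!\int_0^\infty\big[\partial_t\mu_t(s)+\partial_s\mu_t(s)\big]\|\eta^t(s)\|_1^2\,\d s\,\d t\le\mathcal E_0(a),
\]
so that condition $(M_4)$ produces the memory dissipation $\tfrac{\delta}{2}\int_a^b\kappa(t)\|\eta^t\|^2_{\mathcal M_t}\,\d t$. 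To recover dissipation in $\|u\|_1$ and $\|\partial_t u\|$ I would pass to the Lyapunov functional $\mathcal L(t)=\mathcal E_0(t)+\epsilon\Phi(t)+\lambda\Psi(t)$ and add the two inequalities \eqref{4.4} and \eqref{4.5}. The free parameter $\varpi\in(0,1]$ and the constants $\epsilon,\lambda>0$ are then tuned in order: $\lambda$ large against $\epsilon$ so the $-\lambda\int\|\partial_t u\|^2$ term from \eqref{4.5} dominates the $+2\epsilon\int\|\partial_t u\|^2$ from \eqref{4.4}; $\varpi$ small so $\lambda\varpi\int\|u\|_1^2$ is dominated by $\epsilon(2-\varpi)\int\|u\|_1^2$; and $\epsilon,\lambda$ small so the memory dissipation above outweighs the $\tfrac{\epsilon}{\varpi}+\tfrac{C\lambda}{\varpi}$ multiples of $\int\kappa\|\eta\|^2_{\mathcal M_t}$. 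The nonlinear contributions $-2\epsilon\int\langle f(u),u\rangle$ and the $\Psi$-reaction term are absorbed via \eqref{2.1} and the uniform kernel bounds $(M_5)$--$(M_7)$, at the cost of further additive constants.

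After balancing, and using $|\Phi|+|\Psi|\le C\|z\|^2_{\h_t}$ from \eqref{4.8} together with $\inf_t\kappa(t)>0$ from $(M_5)$ (so that $\int\kappa\|\eta\|^2_{\mathcal M_t}$ controls the memory norm), the outcome should be an inequality of exactly the shape required by Lemma \ref{41},
\[
\mathcal L(b)+2\varepsilon_0\int_a^b\mathcal L(y)\,\d y\le\mathcal L(a)+\int_a^b q_1(y)\mathcal L(y)\,\d y+\int_a^b q_2(y)\,\d y,
\]
with $q_1,q_2$ nonnegative, locally summable and satisfying the uniform one-step bounds demanded there. Lemma \ref{41} then gives $\mathcal L(t)\le e^{c_1}\big[\,|\mathcal L(\tau)|e^{-\varepsilon_0(t-\tau)}+C\,\big]$, and the equivalence of $\mathcal L$ with $\|z\|^2_{\h_t}$ transfers this to the stated bound with $\omega=\varepsilon_0$ and a suitable $R_0$ independent of $R$; the absorbing-set conclusion is then immediate from the definition. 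I expect the main obstacle to be the uniform-in-$t$ control of the critical nonlinear terms and of the $1/\kappa(t)$-weighted cross terms generated by $\Psi$, i.e.\ showing that $q_1,q_2$ genuinely meet the summability requirements of Lemma \ref{41}; this is precisely where the full force of $(M_4)$--$(M_7)$ and the time-independence of $g$ enter.
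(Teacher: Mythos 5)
You should first note that this paper never proves Lemma \ref{45}: it is quoted verbatim from \cite{Pata2}, and the only internal points of comparison are the paper's analogous higher-order estimates, Lemmas \ref{49} and \ref{411}, which are proved by exactly the scheme you outline (perturbed energy, the pair $\Phi$, $\Psi$ of Lemma \ref{43} in the fixed combination $\Phi+4\Psi$ with $\varpi=\frac{1}{20}$, the memory identity of Lemma \ref{42}, closure by the integral Gronwall Lemma \ref{41}). So your skeleton is the right one. The problem is that, as written, your plan cannot produce the one claim that gives the lemma its content: that $R_0$ and $\omega$ are \emph{independent of} $R$. The obstacle you flag in your last sentence is not a verification to be carried out; it is precisely where the direct Gronwall closure breaks, and no choice of the structural constants $\epsilon$, $\lambda$, $\varpi$ repairs it.

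Concretely, the $\Psi$-inequality \eqref{4.5} with $\gamma=f(u)-g$ produces the reaction term
\begin{equation*}
\int_a^b \frac{2}{\kappa(t)}\int_0^\infty \mu_t(s)\,\langle \eta^t(s), f(u(t))-g\rangle\, \d s\, \d t ,
\end{equation*}
and under the critical growth \eqref{2.1} the best available bound is $\|f(u)\|_{-1}\leq C\left(1+\|u\|\,\|u\|_1^2\right)$, i.e.\ \emph{cubic} in the energy. If you absorb this term into the memory dissipation $\delta\int_a^b\kappa(t)\|\eta^t\|^2_{\mathcal M_t}\d t$ supplied by $(M_4)$ via Young's inequality, the leftover is of order $\lambda^2\left(1+\|u\|^2\|u\|_1^4\right)/\kappa(t)^2$, which along the trajectory is only bounded by $\mathcal Q(R)$; then $q_2\leq \mathcal Q(R)$ in Lemma \ref{41} and the output is $\E(t,\tau)\leq \mathcal Q(R)e^{-\omega(t-\tau)}+\mathcal Q(R)$ --- boundedness, but an $R$-dependent asymptotic radius, hence no absorbing set of fixed radius $R_1$. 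The alternative route, writing the term as $q_1(t)\Lambda(t)$ with $q_1\sim\|u\|_1^2$ (or $\sim\|u\|_1\|\eta^t\|_{\mathcal M_t}/\sqrt{\kappa(t)}$), requires the mean-smallness $\int_a^b q_1\leq \epsilon(b-a)+c_1$ with $c_1$ independent of $R$, which is not available a priori --- it is essentially equivalent to the conclusion being proved. This is not a defect shared by the paper's own arguments: Lemmas \ref{49} and \ref{411} deliberately settle for asymptotic constants $\mathcal Q(R)$, which is harmless there because by that stage $R$ is the fixed radius $R_1$ furnished by this very Lemma \ref{45}; at the base level that circularity is fatal. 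Closing the gap needs an additional mechanism beyond your outline --- for instance exploiting that the \emph{full} energy (including $\int_\Omega F(u)-\langle g,u\rangle$, with $g$ time-independent) is nonincreasing by the energy identity plus $(M_4)$, and running a bootstrap/iteration that first secures absorption into a ball of universal radius (possibly with an $R$-dependent rate) and only then upgrades to a uniform exponential rate. That extra layer is the actual substance of the result in \cite{Pata2}, and it is absent from your proposal.
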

\medskip


Following the standard method given in \cite{Pata3}, we split the nonlinearity $f$ into the sum
\begin{equation}\label{4.11'}
f(u)=f_0(u)+f_1(u),
\end{equation}
where $f_1\in C^2(\r)$ is globally Lipschitz with $f_1(0)=0$, while $f_0\in C^2(\r)$ vanishes inside $[-1, 1]$ and
\begin{align}
\left|f''_0(u)\right|\leq C|u|,\ \ f'_0(u)\geq 0 \label{4.11}
\end{align}
for some positive constant $C$. Then  we decompose the solution $U(t, \tau)z_\tau$ as the sum
\begin{equation*}
U(t, \tau)z_\tau=U_0(t, \tau)z_\tau+U_1(t, \tau)z_\tau,
\end{equation*}
where $U_0(t, \tau)z_\tau=\left(v(t), \partial_t v(t), \xi^t \right)$ solves the problem
\begin{equation}\label{4.13}
\left\{
  \begin{array}{ll}
    \partial_{tt}v(t)+Av(t)+\int_0^\infty\mu_t(s)A\xi^t(s)\d s+f_0\left(v(t)\right)=0, \\
    U_0(\tau, \tau)z_\tau=z_\tau,
  \end{array}
\right.
\end{equation}
where
\begin{equation*}
\xi^t(s)=\left\{
            \begin{array}{ll}
              v(t)-v(t-s), & s\leq t-\tau, \\
              \xi_\tau(s-t+\tau)+v(t)-v_\tau, & s>t-\tau,
            \end{array}
          \right.
\end{equation*}
and $U_1(t, \tau)z_\tau=\left(w(t), \partial_t w(t), \zeta^t \right)$ solves the problem
\begin{equation}\label{4.14}
\left\{
  \begin{array}{ll}
    \partial_{tt}w(t)+Aw(t)+\int_0^\infty\mu_t(s)A\zeta^t(s)\d s+f_0\left(u(t)\right)-f_0\left(v(t)\right)+ f_1\left(u(t)\right)=g, \\
    U_1(\tau, \tau)z_\tau=0,
  \end{array}
\right.
\end{equation}
where
\begin{equation*}
\zeta^t(s)=\left\{
            \begin{array}{ll}
              w(t)-w(t-s), & s\leq t-\tau, \\
              \zeta_\tau(s-t+\tau)+w(t)-w_\tau, & s>t-\tau.
            \end{array}
          \right.
\end{equation*}

\begin{lemma}\label{48} \cite{Pata2} Let Assumption \ref{22} be valid, and $z_\tau\in \h_\tau$ with $\|z_\tau\|_{\h_\tau}\leq R$.  Then for every $t\geq \tau$,
\begin{align}
& \|U_0(t, \tau)z_\tau\|^2_{\h_t}\leq \mathcal Q(R)e^{-\omega (t-\tau)},\label{4.15}\\
& \|U_1(t, \tau)z_\tau\|^2_{\h^{1/3}_t}\leq \mathcal Q(R),\label{4.16}
\end{align}
hereafter  $\omega>0$ is as shown in Lemma \ref{45}.
\end{lemma}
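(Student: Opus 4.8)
The plan is to prove the two bounds separately: \eqref{4.15} is an exponential-decay statement for the monotone, unforced component $U_0$, while \eqref{4.16} is a uniform-in-time regularity gain for the fully forced component $U_1$ with null initial datum. For \eqref{4.15} I would work with $U_0(t,\tau)z_\tau=(v,\partial_t v,\xi^t)$ solving \eqref{4.13} and introduce the energy
\begin{equation*}
\E_0(t)=\tfrac12\|\partial_t v(t)\|^2+\tfrac12\|v(t)\|_1^2+\tfrac12\|\xi^t\|^2_{\mathcal M_t}+\int_\Omega F_0(v(t))\,\d x,\qquad F_0(u)=\int_0^u f_0(r)\,\d r\ge 0,
\end{equation*}
the nonnegativity of $F_0$ following from $f_0'\ge0$ and the vanishing of $f_0$ on $[-1,1]$. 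Testing \eqref{4.13} with $\partial_t v$ and using Lemma \ref{42} at the level $\sigma=0$ together with $(M_4)$ yields the dissipative identity $\tfrac{\d}{\d t}\E_0(t)\le-\tfrac{\delta\kappa(t)}{2}\|\xi^t\|^2_{\mathcal M_t}\le0$, the nonlinear term disappearing from the right-hand side because $\langle f_0(v),\partial_t v\rangle=\tfrac{\d}{\d t}\int_\Omega F_0(v)\,\d x$.

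Since this identity only dissipates the memory component, I would then pass to the perturbed functional $\Lambda(t)=\E_0(t)+\varepsilon_1\Phi(t)+\varepsilon_2\Psi(t)$ built from the auxiliary functionals of Lemma \ref{43} with $\gamma=f_0(v)$. Estimate \eqref{4.4} recovers the missing coercive term $\int_a^b\|v\|_1^2$ and, crucially, contributes the forcing $-2\int_a^b\langle f_0(v),v\rangle\le0$ with the right sign by monotonicity through the origin; estimate \eqref{4.5} recovers $\int_a^b\|\partial_t v\|^2$ against the memory dissipation, its forcing being a lower-order term absorbed by the energy via the three-dimensional embeddings $H^1\hookrightarrow L^6\hookrightarrow L^4$. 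Choosing $\varepsilon_1,\varepsilon_2,\varpi$ small makes $\Lambda$ equivalent to $\|U_0\|^2_{\h_t}$ (using \eqref{4.8} and $0\le\int_\Omega F_0(v)\le C\|v\|_1^4$) and produces an integral inequality of exactly the shape required by the Gronwall-type Lemma \ref{41}, whose conclusion is \eqref{4.15}.

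For \eqref{4.16} I would run the same scheme one level higher, at $\sigma=\tfrac13$, for $U_1(t,\tau)z_\tau=(w,\partial_t w,\zeta^t)$ solving \eqref{4.14}. Testing with $A^{1/3}\partial_t w$ and invoking the $\sigma=\tfrac13$ versions of Lemmas \ref{42}-\ref{43} reduces everything to controlling the forcing $g-f_0(u)+f_0(v)-f_1(u)$ at this level, uniformly in $t$. The time-independent datum $g\in H$ is handled by integrating $\langle g,A^{1/3}\partial_t w\rangle$ by parts in time, so that only $\|w\|_{4/3}^2$ (absorbable) and a bounded constant remain; the globally Lipschitz part gives $\|f_1(u)\|_1\le C\|u\|_1$; and the delicate term $f_0(u)-f_0(v)=\big(\int_0^1 f_0'(v+\theta w)\,\d\theta\big)w$ is estimated through $|f_0''|\le C|\cdot|$ and fractional Sobolev inequalities, feeding in the uniform a priori bound $\|U(t,\tau)z_\tau\|_{\h_t}\le\mathcal Q(R)$ of Lemma \ref{45} and the decay \eqref{4.15} of $v$. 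A final application of Lemma \ref{41}, now with a time-uniformly bounded (rather than decaying) forcing, gives $\|U_1\|^2_{\h^{1/3}_t}\le\mathcal Q(R)$.

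I expect the whole difficulty to concentrate in the regularity estimate \eqref{4.16}. Because the growth condition \eqref{2.1} forces the nonlinearity to be cubic, hence critical for $H^1$ in dimension three, one cannot gain a full derivative, and the exponent $\tfrac13$ is precisely the largest fractional gain that the critical embedding $H^1\hookrightarrow L^6$ allows when bounding $A^{1/3}\big(f_0(u)-f_0(v)\big)$ by $\|w\|_{4/3}$ and $\|u\|_1,\|v\|_1$. A secondary, pervasive difficulty is keeping every constant independent of $t$: this requires the uniform structural hypotheses $(M_2)$, $(M_5)$-$(M_7)$ so that the memory terms at level $\tfrac13$ are dominated by $\kappa(t)$-weighted quantities with $t$-independent coefficients, exactly as in the base level.
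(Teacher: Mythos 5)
You should first be aware that the paper contains no proof of Lemma \ref{48} at all: the lemma is quoted verbatim from \cite{Pata2} (the citation is part of its statement), so the only meaningful comparison is with the argument of \cite{Pata2} and with the way the present paper runs the same machinery at higher regularity in Lemmas \ref{49} and \ref{411}. Against that benchmark, your plan for \eqref{4.15} is essentially the standard one and is sound in outline, with two caveats. First, in the time-dependent-kernel setting you may not write differential identities such as $\frac{\d}{\d t}\E_0(t)\le-\frac{\delta\kappa(t)}{2}\|\xi^t\|^2_{\mathcal M_t}$: the memory variable is defined by the representation formula \eqref{1.2}, not by a supplementary PDE, so everything must be phrased through the integral inequalities of Lemmas \ref{42}--\ref{43}, applied to Galerkin approximations (this is the very point of the theory of \cite{Pata1,Pata2}). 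Second, when you absorb the forcing term of \eqref{4.5} (with $\gamma=f_0(v)$, $\|f_0(v)\|_{-1}\le C\|v\|\,\|v\|_1^2$) into the memory dissipation, the smallness parameter must satisfy $\e\,\mathcal Q(R)\lesssim\delta$, so the rate produced by Lemma \ref{41} depends on $R$; the universal $\omega$ of Lemma \ref{45} claimed in the statement then requires an extra restart step (rerun the estimate from the time $U_0$ enters a ball of universal radius, using that $U_0$ is itself a process). Both points are repairable.

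The genuine gap is in \eqref{4.16}. Your reduction --- testing with $A^{1/3}\partial_t w$ ``reduces everything to controlling the forcing $\ldots$ at this level, uniformly in $t$'' --- is not valid for this hyperbolic problem. Since $\partial_t w$ is controlled only in $H^{1/3}$, the multiplier pairs the forcing in the \emph{positive} norm $H^{1/3}$, and no uniform bound on the critical cubic difference exists there: cubic expressions of $H^1$ functions lie only in $W^{1,6/5}\subset L^2$, not in $H^{1/3}$, and the best available estimate, $\|f_0(u)-f_0(v)\|_{1/3}\le\mathcal Q(R)\|w\|_{4/3}$, necessarily spends the $H^{4/3}$ norm of the unknown $w$ itself. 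The resulting term $\mathcal Q(R)\|w\|_{4/3}\|\partial_t w\|_{1/3}$ is quadratic in the very energy being estimated, with a coefficient that is neither small nor integrable in time, and Lemma \ref{41} cannot digest it (its hypothesis forces $\int_a^b q_1\,\d y\le\e(b-a)+c_1$); a Gronwall closure with such a coefficient yields exponential growth, not a uniform bound. Note that the same obstruction already affects $g\in H$, which is why your move of pulling a time derivative out of $\langle g,A^{1/3}\partial_t w\rangle$ is not optional cosmetics --- yet you do not propose the analogous move for the nonlinear terms. The workable proof, which is what \cite{Pata2,Danese} do and what this paper imitates through the functionals $L_{1/3}$, $L_1$ containing $2\langle\gamma,A^{1/6}u\rangle$ in Lemmas \ref{49} and \ref{411}, embeds $2\langle f_0(u)-f_0(v)+f_1(u)-g,A^{1/3}w\rangle$ into the energy functional and differentiates, so that the nonlinearity only ever enters through $\langle\partial_t[f_0(u)-f_0(v)],A^{1/3}w\rangle$; one then splits this derivative so that every dangerous contribution carries a factor that either decays by \eqref{4.15} (such as $\|f_0'(v)\|_{L^3}\le C\|v\|_1^2$ or $\|\partial_t v\|$) or has small time averages thanks to the dissipation integrals (such as $\|\partial_t u\|$), while the genuinely cubic-in-$w$ remainders are subcritical (they need only $\|w\|_{L^{54/13}}\le C\|w\|_1$) and enter as bounded $q_2$-terms compatible with Lemma \ref{41}. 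This decaying/integrable-coefficient structure is the heart of the lemma, and your sketch never engages with it; as written, your argument for \eqref{4.16} does not close.
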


   Now, based on Lemmas \ref{41} to \ref{48}, we further establish some  new  estimates which will play key roles for our applying  Theorem \ref{ea} and its corollary to problem \eqref{1.1}-\eqref{1.3} to establish    the existence of the time-dependent exponential attractors.

\begin{lemma}\label{49} Let Assumption \ref{22} be valid, and   $z_\tau\in \h_\tau^{1/3}$ with $\|z_\tau\|_{\h_\tau}\leq R$. Then for every $t\geq \tau$, we have
\begin{equation*}
\|U(t, \tau)z_\tau\|^2_{\h_t^{1/3}}\leq \mathcal Q\left(R+\|z_\tau\|_{\h_\tau^{1/3}}\right)e^{-\omega (t-\tau)}+ \mathcal Q(R).
\end{equation*}
\end{lemma}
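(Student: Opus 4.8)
The plan is to exploit the splitting $U(t,\tau)z_\tau = U_0(t,\tau)z_\tau + U_1(t,\tau)z_\tau$ already introduced in \eqref{4.13}--\eqref{4.14}. Since $U_1(\tau,\tau)z_\tau=0$ and the forcing together with the globally Lipschitz part of the nonlinearity are carried by the $U_1$-component, estimate \eqref{4.16} gives $\|U_1(t,\tau)z_\tau\|^2_{\h_t^{1/3}}\leq \mathcal Q(R)$ for free, which will furnish the additive term $\mathcal Q(R)$ in the conclusion. It remains to prove the higher-order \emph{decay} estimate
\begin{equation*}
\|U_0(t,\tau)z_\tau\|^2_{\h_t^{1/3}}\leq \mathcal Q\big(R+\|z_\tau\|_{\h_\tau^{1/3}}\big)e^{-\omega(t-\tau)},\ \ \forall t\geq \tau,
\end{equation*}
since $U_0(\tau,\tau)z_\tau=z_\tau$ carries the full $\h_\tau^{1/3}$-data. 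Combining the two bounds through $\|U\|^2_{\h_t^{1/3}}\leq 2\|U_0\|^2_{\h_t^{1/3}}+2\|U_1\|^2_{\h_t^{1/3}}$ then yields the claim.

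For the $U_0$-estimate I work at regularity level $\sigma=1/3$ by lifting: applying $A^{1/6}$ to the first equation of \eqref{4.13} shows that $(A^{1/6}v,A^{1/6}\partial_t v,A^{1/6}\xi)$ solves a problem of the form \eqref{4.1}--\eqref{4.3} with forcing $\gamma=A^{1/6}f_0(v)$, whose $\h_t$-energy is exactly $\|U_0(t,\tau)z_\tau\|^2_{\h_t^{1/3}}$. I then form the Lyapunov functional
\begin{equation*}
\mathcal L(t)=\tfrac12\|U_0(t,\tau)z_\tau\|^2_{\h_t^{1/3}}+\varepsilon_1\Phi(t)+\varepsilon_2\Psi(t),
\end{equation*}
where $\Phi,\Psi$ are the $1/3$-analogues of \eqref{4.6}--\eqref{4.7}, and run the integral-inequality machinery of Lemmas \ref{42} and \ref{43} on the lifted variables. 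The memory dissipation is produced as before: Lemma \ref{42} at level $\sigma=1/3$ turns the memory coupling into $\|\xi^t\|^2_{\mathcal M_t^{1/3}}$ together with $-\int_0^\infty[\partial_t\mu_t(s)+\partial_s\mu_t(s)]\|\xi^t(s)\|^2_{1+1/3}\,\d s$, which by $(M_4)$ dominates $\delta\kappa(t)\|\xi^t\|^2_{\mathcal M_t^{1/3}}$. Choosing $\varpi$ and then $\varepsilon_1,\varepsilon_2$ small extracts, apart from the nonlinear terms, full dissipation $2\epsilon\,\mathcal L(t)$ in the quantities $\|v\|^2_{1+1/3}+\|\partial_t v\|^2_{1/3}+\|\xi^t\|^2_{\mathcal M_t^{1/3}}$, while the level-$1/3$ version of \eqref{4.8} guarantees $\mathcal L(t)\simeq \tfrac12\|U_0(t,\tau)z_\tau\|^2_{\h_t^{1/3}}$.

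The genuine difficulty is the control of the nonlinear contributions, which at regularity $1/3$ are \emph{critical}: the terms $\langle f_0(v),A^{1/3}\partial_t v\rangle$ (from the energy identity) and $\langle f_0(v),A^{1/3}v\rangle$, $\tfrac{2}{\kappa(t)}\int_0^\infty\mu_t(s)\langle \xi^t(s),A^{1/3}f_0(v)\rangle\,\d s$ (from $\Phi$ and $\Psi$) cannot be split, by Sobolev embeddings alone, into an absorbable fraction of the top-order norm plus harmless lower-order remainders, since no $H^2$-type bound on $v$ is available and both factors live at the top level. The resolution uses the growth condition \eqref{4.11} (so $|f_0'(v)|\leq C|v|^2$) together with the $3$-dimensional embedding $H^1\hookrightarrow L^6$ and a fractional Gagliardo--Nirenberg inequality to obtain $\|f_0(v)\|_{1/3}\leq C\|v\|_1^2\|v\|_{1+1/3}$, whence every nonlinear term is bounded by $C\|v\|_1^2$ times the higher-order energy $\tfrac12\|U_0(t,\tau)z_\tau\|^2_{\h_t^{1/3}}$. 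The decisive point is that \eqref{4.15} provides not mere boundedness but \emph{exponential decay} $\|v\|_1^2\leq \mathcal Q(R)e^{-\omega(t-\tau)}$, so the prefactor $C\|v\|_1^2$ is a decaying, integrable coefficient. Consequently the integral inequality for $\mathcal L$ takes exactly the form required by Lemma \ref{41}, with $q_1(t)=\mathcal Q(R)e^{-\omega(t-\tau)}$ (so that $\int_\tau^\infty q_1\leq \mathcal Q(R)/\omega=:c_1$) and $q_2\equiv 0$; Lemma \ref{41} then delivers $\tfrac12\|U_0(t,\tau)z_\tau\|^2_{\h_t^{1/3}}\leq e^{c_1}\mathcal L(\tau)e^{-\omega(t-\tau)}\leq \mathcal Q(R+\|z_\tau\|_{\h_\tau^{1/3}})e^{-\omega(t-\tau)}$, the sought decay. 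Combining this with the $U_1$-bound \eqref{4.16} completes the proof.
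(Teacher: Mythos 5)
Your proposal is correct, but it takes a genuinely different route from the paper's own proof. The paper never proves a $1/3$-level decay estimate for $U_0$: it runs the Lyapunov machinery on the \emph{full} solution $u$, taking the multiplier $2A^{1/3}\partial_t u$ in Eq. \eqref{1.1} and splitting the resulting critical term $2\langle f'(u)\partial_t u, A^{1/3}u\rangle$ into $I_1+I_2+I_3$ by means of the decomposition $u=v+w$: the difference term $I_1$ (coefficient $f_0'(u)-f_0'(v)$) is controlled by the boundedness of $w$ in $H^{4/3}\hookrightarrow L^{18}$ from \eqref{4.16}, the term $I_2$ (coefficient $f_0'(v)$) carries the decaying factor $\|v\|_1^2$ from \eqref{4.15}, and $I_3$ is harmless; one application of Lemma \ref{41} with $q_1=C(\alpha+\|v\|_1^2)$ and a \emph{constant} $q_2=\mathcal Q(R)$ then yields the decaying term and the additive constant simultaneously, using only the H\"older/Sobolev estimates \eqref{4.24}. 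You instead estimate the components separately: the additive $\mathcal Q(R)$ is \eqref{4.16} verbatim, while the decay comes from a new estimate of $U_0$ in $\h^{1/3}_t$, which forces $q_2\equiv 0$ in Lemma \ref{41}; this is exactly where your fractional chain rule bound $\|f_0(v)\|_{1/3}\leq C\|v\|_1^2\|v\|_{4/3}$ is indispensable, since it turns every nonlinear contribution (in the energy identity and in the $\gamma$-terms of Lemma \ref{43}, using $(M_5)$ for the memory one) into an integrable decaying coefficient times the energy, whereas a cruder bound like $\|f_0(v)\|\,\|v\|_{2/3}\leq \alpha\|v\|^2_{4/3}+\mathcal Q(R)$ would reintroduce a constant forcing and destroy the pure decay. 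Your route buys modularity (decay for $U_0$ plus boundedness for $U_1$, with no $I_1$-type difference estimate needed) at the cost of importing the Christ--Weinstein/Kato--Ponce fractional chain rule on a bounded domain (justified by extension operators and the identification $D(A^{1/6})=H^{1/3}$, valid since $1/3<1/2$), a tool the paper avoids entirely by working with the derivative form $f_0'(\cdot)\partial_t u$. Two cosmetic points, common to both arguments: the rate delivered by Lemma \ref{41} is the small Lyapunov parameter $\e$ rather than $\omega$, so one renames $\omega:=\min\{\omega,\e\}$ at the end; and Lemma \ref{43} must be applied to Galerkin approximations, as the paper's remark following it prescribes.
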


\begin{proof} For any  $t\geq \tau$, we define the functionals of the solutions  $(u(t), \partial_t u(t), \eta^t)= U(t, \tau)z_\tau$ as follows:
\begin{align}
&\E_{1/3}(t, \tau)= \frac12\|U(t, \tau)z_\tau\|^2_{\h^{1/3}_t},\nonumber\\
&\mathcal L_{1/3}(t) = L_{1/3}(t)+ \|\eta^t\|^2_{{\mathcal M}^{1/3}_t}, \label{4.18}\\
& \Lambda_{1/3}(t)= \mathcal L_{1/3}(t)+ 2\e \Big[\Phi(t)+ 4\Psi(t)\Big],\label{4.19}
\end{align}
where $\e\in (0, 1], \Phi$ and $\Psi$ are as shown    in \eqref{4.6}-\eqref{4.7}, with $
\left(p(t), \partial_t p(t), \psi^t\right)=  \left(A^{1/6}u(t), \partial_t A^{1/6}u(t), A^{1/6}\eta^t\right)$, and the functional
\begin{equation*}
L_{1/3}(t)=\|u(t)\|^2_{4/3}+ \|\partial_tu(t)\|^2_{1/3}+ 2\langle \gamma(t),  A^{1/6}u(t)\rangle,\ \ \hbox{with}\ \ \gamma(t)=A^{1/6} \left(f(u(t))-g\right).
\end{equation*}
By  condition \eqref{2.1} and  Lemma \ref{45}, we have
\begin{equation}\label{4.20}
\|f(u)\|\leq C\|1+|u|^3\|\leq C\left( 1+\|u\|^3_{L^6}\right)\leq C\left( 1+\|u\|^3_1\right)\leq \mathcal Q(R),
\end{equation}
which implies that
\begin{equation*}
2|\langle \gamma(t),  A^{1/6}u(t)|\rangle\leq 2\left[\|f(u)\|+\|g\|\right]\|u \|_{2/3}\leq \frac14 \|u(t)\|^2_{4/3}+\mathcal Q(R),
\end{equation*}
and hence,
\begin{equation}\label{4.21}
\frac32 \E_{1/3}(t, \tau)- \mathcal Q(R)\leq \mathcal L_{1/3}(t)\leq \frac52\E_{1/3}(t, \tau)+ \mathcal Q(R).
\end{equation}
By formula \eqref{4.8},
\begin{equation}\label{4.18'}
\left|\Phi(t)\right|+ \left|\Psi(t)\right|\leq C\E_{1/3}(t, \tau),\ \ \forall t\geq \tau.
\end{equation}
The combination of  \eqref{4.19} and \eqref{4.21}-\eqref{4.18'}  yields
\begin{equation}\label{4.26}
 \E_{1/3}(t, \tau)- \mathcal Q(R)\leq  \Lambda_{1/3}(t)\leq 3 \E_{1/3}(t, \tau)+ \mathcal Q(R)
\end{equation}
for $\e>0$ suitably small. Taking the multiplier $2A^{1/3} \partial_t u$ in Eq. \eqref{1.1} gives
\begin{equation}\label{4.22}
\frac{\d}{\d t}L_{1/3}(t)+ 2\langle \eta^t, \partial_tu(t)\rangle _{\mathcal M_t^{1/3}}= 2\left \langle f'(u(t))\partial_t u(t), A^{1/3}u(t)\right \rangle:=I_1(t)+I_2(t)+I_3(t),
\end{equation}
where
\begin{align*}&
I_1=2\left \langle \left[f'_0(u)- f'_0(v)\right]\partial_t u, A^{1/3}u\right \rangle,\\
&I_2=2\left \langle  f'_0(v)\partial_t u, A^{1/3}u\right \rangle,\\
&I_3= 2\left \langle f'_1(u)\partial_t u, A^{1/3}u\right \rangle.
\end{align*}
By Lemma  \ref{45}, estimates \eqref{4.11}, \eqref{4.15}-\eqref{4.16} and the Sobolev embedding
\begin{equation}\label{4.24}
H^1\hookrightarrow L^{6},\ \ H^{4/3}\hookrightarrow L^{18}, \ \  H^{2/3}\hookrightarrow L^{18/5}, \ \  H^{1/3}\hookrightarrow L^{18/7},
\end{equation}
we have
\begin{equation*}
\begin{split}
|I_1|&\leq C\int_\Omega \left(|u|+ |v|\right)|w||\partial_t u|| A^{1/3} u|\d x\\
&\leq C\left(\|u\|_{L^6 }+ \|v\|_{ L^6}\right)\|w\|_{L^{18} }\|\partial_t u\|\| A^{1/3} u\|_{L^{18/5}}\\
&\leq \alpha \|u\|^2_{4/3}+ \frac{\mathcal Q(R)}{\alpha},\ \ \forall \alpha\in (0, 1],\\
|I_2|&\leq C\int_\Omega |v|^2|\partial_t u|| A^{1/3} u|\d x\\
&\leq C \|v\|_{L^6 }^2\|\partial_t u\|_{L^{18/7}}\| A^{1/3} u\|_{L^{18/5}}\\
&\leq C\|v\|_1^2\left[ \|\partial_t u\|^2_{1/3}+ \|u\|^2_{4/3}\right],
\end{split}
\end{equation*}
and
\begin{equation*}
|I_3|\leq C\|\partial_t u\|\|A^{1/3}u\|\leq \mathcal Q(R) \|u\|_{4/3}\leq \alpha \|u\|^2_{4/3}+ \frac{\mathcal Q(R)}{\alpha},\ \ \forall \alpha\in (0, 1].
\end{equation*}
Inserting above estimates into \eqref{4.22} and making use of \eqref{4.21} receive
\begin{equation}\label{4.23}
\frac{\d}{\d t} L_{1/3}(t)+ 2\langle \eta^t, \partial_t u\rangle _{\mathcal M_t^{1/3}}
\leq q_1(t) \mathcal L_{1/3}(t) + \mathcal Q(R) q_1(t)+ \frac{\mathcal Q(R)}{\alpha},\ \ \forall \alpha\in (0, 1],
\end{equation}
 where $q_1(t)=C\left(\alpha+ \|v(t)\|_1^2\right)$.
   Integrating inequality \eqref{4.23} over $[a, b]$, with $b\geq a\geq \tau$, yields
\begin{equation*}
\begin{split}
& L_{1/3}(b)+ 2\int_a^b\langle \eta^t, \partial_t u(t)\rangle _{\mathcal M_t^{1/3}}\d t\\
\leq \ &L_{1/3}(a)+ \int_a^b q_1(t) \mathcal L_{1/3}(t)\d t + \int_a^b \left(q_1(t)+ \frac{\mathcal Q(R)}{\alpha}\right)\d t, \ \ \forall \alpha\in (0, 1].
\end{split}
\end{equation*}
It follows form Lemma \ref{42} (taking  $\sigma=1/3$ there) that
\begin{equation*}
\begin{split}
& \|\eta^b\|^2_{\mathcal M_b^{1/3}}-\int_a^b\int_0^\infty\left[\partial_t\mu_t(s)+ \partial_s \mu_t(s) \right]\|\eta^t(s)\|^2_{4/3}\d s\d t\\
\leq\ &\|\eta^a\|^2_{\mathcal M_a^{1/3}}+2\int_a^b\langle\partial_t u(t), \eta^t\rangle_{\mathcal M_t^{1/3}}\d t.
\end{split}
\end{equation*}
Adding above two   inequalities together we obtain
\begin{equation}\label{4.25}
\begin{split}
& \mathcal L_{1/3}(b)- \int_a^b\int_0^\infty\left[\partial_t\mu_t(s)+ \partial_s \mu_t(s) \right]\|\eta^t(s)\|^2_{4/3}\d s\d t\\
\leq\ &\mathcal L_{1/3}(a)+ \int_a^b q_1(t) \mathcal L_{1/3}(t)\d t + \int_a^b \left(q_1(t)+ \frac{\mathcal Q(R)}{\alpha}\right)\d t, \ \ \forall \alpha\in (0, 1].
\end{split}
\end{equation}

Exploiting estimates \eqref{4.4}-\eqref{4.5} ( taking $\varpi= \frac{1}{20}$ there), we get
\begin{equation}\label{4.27}
\begin{split}
& \Phi(b)+ 4\Psi(b)+\frac{7}{4}\int_a^b \| u(t)\|^2_{4/3}\d t+2\int_a^b \|\partial_t u(t)\|^2_{1/3}\d t\\
\leq\ & \Phi(a)+ 4\Psi(a)- 4 M\int_a^b\int_0^\infty\left[\partial_t\mu_t(s)+ \partial_s \mu_t(s) \right]\|\eta^t(s)\|^2_{4/3}\d s\d t\\
&+\  C\int_a^b\kappa(t)\|\eta^t\|^2_{\mathcal M_t^{1/3}}\d t -2\int_a^b \langle \gamma(t), A^{1/6}u(t)\rangle \d t\\
 &+\ 8\int_a^b \frac{1}{\kappa(t)}\int_0^\infty\mu_t(s)\langle A^{1/6}\eta^t(s), \gamma(t)\rangle \d s\d t.
\end{split}
\end{equation}
Due to
\begin{equation*}
\|\gamma(t)\|_{-1}=\|A^{-1/3}(f(u)-g)\| \leq C\left(\|f\left(u(t)\right)\|+\|g\|\right)\leq \mathcal Q(R),
\end{equation*}
we have
\begin{equation}\label{4.28}
\begin{split}
-2\int_a^b \langle \gamma(t), A^{1/6}u(t)\rangle \d t &\leq 2\int_a^b \|\gamma(t)\|_{-1}\|u(t)\|_{4/3} \d t\\
& \leq\frac {1}{4} \int_a^b \| u(t)\|^2_{4/3}\d t+ \mathcal Q(R)(b-a),
\end{split}
\end{equation}
and  by  conditions $(M_1)$ and $(M_5)$, we have
\begin{align}
&8\int_a^b \frac{1}{\kappa(t)}\int_0^\infty\mu_t(s)\langle A^{1/6}\eta^t(s), \gamma(t)\rangle \d s\d t\nonumber\\
\leq \  & 8\int_a^b \frac{1}{\kappa(t)}\|\gamma(t)\|_{-1} \left(\int_0^\infty\mu_t(s)\|\eta^t(s)\|_{4/3} \d s\right)\d t\nonumber\\
\leq \ & \mathcal Q(R) \int_a^b \frac{1}{\kappa(t)} \sqrt{\kappa(t)}\|\eta^t\|_{\mathcal M_t^{1/3}}\d t\label{4.29}\\
\leq \ &  \mathcal Q(R)(b-a)+ \mathcal Q(R) \int_a^b \kappa(t)\|\eta^t\|^2_{\mathcal M_t^{1/3}}\d t\nonumber.
\end{align}
Inserting estimates \eqref{4.28}-\eqref{4.29} into \eqref{4.27} and making use of condition $(M_5)$ and  \eqref{4.26} turn out
\begin{equation}\label{4.30}
\begin{split}
& \Phi(b)+ 4\Psi(b)+\int_a^b\Lambda_{1/3}(t)\d t\\
\leq\ & \Phi(a)+ 4\Psi(a)- 4M\int_a^b\int_0^\infty\left[\partial_t\mu_t(s)+ \partial_s \mu_t(s) \right]\|\eta^t(s)\|^2_{4/3}\d s\d t\\
& +\  \mathcal Q(R) \int_a^b \kappa(t)\|\eta^t\|^2_{\mathcal M_t^{1/3}}\d t + \mathcal Q(R)(b-a).
\end{split}
\end{equation}
The combination of \eqref{4.25}  and \eqref{4.30} yields
\begin{equation}\label{4.31}
\begin{split}
&  \Lambda_{1/3}(b)+2\e \int_a^b\Lambda_{1/3}(t)\d t+\mathcal J \\
\leq\ &  \Lambda_{1/3}(a)+ \int_a^b q_1(t) \mathcal L_{1/3}(t)\d t + \mathcal Q(R)\int_a^b \left(q_1(t)+\frac{1}{\alpha} \right)\d t \\
\leq\ &  \Lambda_{1/3}(a)+ \frac 52 \int_a^b q_1(t) \Lambda_{1/3}(t)\d t + \mathcal Q(R)\int_a^b \left(q_1(t)+\frac{1}{\alpha} \right)\d t,
\end{split}
\end{equation}
where
\begin{equation*}
\begin{split}
\mathcal J&= -(1-8\e M)\int_a^b\int_0^\infty\left[\partial_t\mu_t(s)+ \partial_s \mu_t(s) \right]\|\eta^t(s)\|^2_{4/3}\d s\d t-2\e \mathcal Q(R) \int_a^b \kappa(t)\|\eta^t\|^2_{\mathcal M_t^{1/3}}\d t\\
&\geq \left(\delta(1-8\e M)-2\e \mathcal Q(R)\right)\int_a^b \kappa(t)\|\eta^t\|^2_{\mathcal M_t^{1/3}}\d t\geq 0
\end{split}
\end{equation*}
for $\e\in (0, 1]$ suitably small. Taking $\alpha: C\alpha=\omega\leq \epsilon$, a simple calculation shows that
\begin{align*}
&\frac 52 \int_a^b  q_1(t)\d t\leq  C\alpha(b-a)+ \mathcal Q(R)\int_a^b e^{-\omega(t-\tau)} dt\leq
\omega(b-a)+ \mathcal Q(R),\\
&
 \sup_{t\geq \tau}\int_t^{t+1}Q(R) \left(q_1(s)+\frac{1}{\alpha} \right)\d s\leq \mathcal Q(R)\Big(\omega+\frac{Q(R)}{\omega}\Big)=Q(R).
\end{align*}
Therefore, applying Lemma \ref{41} to \eqref{4.31} and making use of \eqref{4.26} give the conclusion of  Lemma \ref{49}.
\end{proof}

For any fixed $\tau\in\r$ and $z_\tau\in \h_\tau^{1/3}$ with $\|z_\tau\|_{\h_\tau^{1/3}}\leq R$, we still write
\begin{equation}\label{fenjie}
U(t, \tau)z_\tau=U_0(t, \tau)z_\tau+U_1(t, \tau)z_\tau,
\end{equation}
where $U_0(t, \tau)$ and $U_1(t, \tau)$ are the solution operators of problems  \eqref{4.13} and \eqref{4.14},  respectively, with
\begin{equation*}
f_0(u)=0 \ \ \hbox{and}\ \ f_1(u)=f(u).
\end{equation*}

It follows from Lemma \ref{49} that, for every  $z_\tau\in \h_\tau^{1/3}$ with $\|z_\tau\|_{\h_\tau^{1/3}}\leq R$,
\begin{equation}\label{4.32}
\|U(t, \tau)z_\tau\|^2_{\h_t^{1/3}}\leq \mathcal Q(R), \ \ \forall t\geq \tau.
\end{equation}
Thus, by using the same argument as Lemma 8.1 in \cite{Pata2}, we have (the proof is omitted here)

\begin{lemma}\label{410}   Let Assumption \ref{22} be valid, and  $z_\tau\in \h_\tau^{1/3}$ with $\|z_\tau\|_{\h_\tau^{1/3}}\leq R$. Then  for every $t\geq \tau$,
\begin{align}
&\|U_0(t, \tau)z_\tau\|^2_{\h_t}\leq \mathcal Q(R) e^{-\omega (t-\tau)},\label{4.33}\\
& \|U_1(t, \tau)z_\tau\|^2_{\h^1_t}\leq \mathcal Q(R).\label{4.34}
\end{align}
\end{lemma}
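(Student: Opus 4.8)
The plan is to establish the two bounds \eqref{4.33} and \eqref{4.34} separately, both through the energy--functional machinery of Lemmas \ref{41}--\ref{43}, the decisive new ingredient being the uniform higher--regularity bound \eqref{4.32} coming from Lemma \ref{49}. Recall that in the present splitting \eqref{fenjie} one takes $f_0\equiv 0$ and $f_1=f$, so that $U_0(t,\tau)z_\tau=(v,\partial_t v,\xi^t)$ solves the \emph{purely linear} memory problem \eqref{4.13} with datum $z_\tau$, while $U_1(t,\tau)z_\tau=(w,\partial_t w,\zeta^t)$ solves \eqref{4.14} with the time--independent source $g$, the nonlinearity $f(u)$ and null initial data. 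All the estimates below are first carried out on the Galerkin approximations (smooth enough to apply Lemma \ref{43}) and then passed to the limit.

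For \eqref{4.33}, since $f_0\equiv0$ the equation for $U_0$ carries no forcing, i.e. $\gamma\equiv0$ in the notation of \eqref{4.1}. I would repeat the construction in the proof of Lemma \ref{49} at the base level $\sigma=0$: form $\Lambda(t)=\mathcal L(t)+2\e[\Phi(t)+4\Psi(t)]$ with $(p,\partial_t p,\psi^t)=(v,\partial_t v,\xi^t)$, which for $\e$ small is equivalent to $\tfrac12\|U_0(t,\tau)z_\tau\|_{\h_t}^2$ by \eqref{4.8}. Adding the memory identity of Lemma \ref{42} to the inequalities \eqref{4.4}--\eqref{4.5} (now with every forcing contribution absent) yields $\Lambda(b)+2\e\int_a^b\Lambda(t)\,\d t\le\Lambda(a)$, the memory dissipation produced by condition $(M_4)$ giving a nonnegative term on the left that is simply discarded. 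The Gronwall Lemma \ref{41} (with $q_1\equiv q_2\equiv0$) then gives $\Lambda(t)\le\Lambda(\tau)e^{-\omega(t-\tau)}$; since $\|z_\tau\|_{\h_\tau}\le C\|z_\tau\|_{\h_\tau^{1/3}}\le CR$, this is exactly \eqref{4.33}.

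For \eqref{4.34} I would run the same scheme one step higher, at level $\sigma=1$, testing \eqref{4.14} with the multiplier $2A\partial_t w$ and working with the corrected functional
\[
\mathcal L_1(t)=\|w\|_2^2+\|\partial_t w\|_1^2+\|\zeta^t\|^2_{\mathcal M^1_t}+2\langle A^{1/2}(f(u)-g),A^{1/2}w\rangle,
\]
whose last term plays exactly the role of the $\langle\gamma,A^{1/6}u\rangle$ correction in Lemma \ref{49}: differentiating it cancels the boundary contribution $2\langle f(u)-g,A\partial_t w\rangle$ and leaves $\tfrac{\d}{\d t}\mathcal L_1+2\langle\zeta^t,\partial_t w\rangle_{\mathcal M^1_t}=2\langle f'(u)\partial_t u,Aw\rangle$. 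Combining this with $\Phi,\Psi$ at level $\sigma=1$ and Lemma \ref{42} taken with $\sigma=1$, and then invoking Lemma \ref{41}, reduces the whole claim to a uniform-in-$t$ control of the nonlinear source.

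This control is the main obstacle, since $f$ has cubic growth and must be handled two--thirds of a derivative above the level at which $u$ is a priori bounded. The point is that \eqref{4.32} gives $\|u(t)\|_{4/3}+\|\partial_t u(t)\|_{1/3}\le\mathcal Q(R)$ uniformly, and the embeddings \eqref{4.24} are sharp enough to bridge the gap: from $|f'(u)|\le C(1+|u|^2)$, together with $H^{4/3}\hookrightarrow L^{18}$ and $H^{1/3}\hookrightarrow L^{18/7}$, Hölder with $\tfrac1{18}+\tfrac1{18}+\tfrac7{18}=\tfrac12$ yields
\[
\|f'(u)\partial_t u\|\le C\|\partial_t u\|+C\|u\|_{L^{18}}^2\|\partial_t u\|_{L^{18/7}}\le\mathcal Q(R),
\]
so that $2\langle f'(u)\partial_t u,Aw\rangle\le\alpha\|w\|_2^2+\mathcal Q(R)/\alpha$, the quadratic part being absorbed by the dissipative $\int_a^b\|w\|_2^2\,\d t$ generated on the left of \eqref{4.4}; the analogous bound $\|f(u)\|_1\le\mathcal Q(R)$ controls the correction term in $\mathcal L_1$. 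Since this source is bounded but does not decay, \eqref{4.34} is a uniform regularity bound rather than a decaying one. Once the nonlinearity is absorbed, Lemma \ref{41} (with bounded $q_1$ and $q_2$) closes the argument and delivers $\|U_1(t,\tau)z_\tau\|^2_{\h_t^1}\le\mathcal Q(R)$.
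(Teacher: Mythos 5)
Your proposal is correct and follows essentially the approach the paper intends: the paper omits this proof, saying it repeats ``the same argument as Lemma 8.1 in \cite{Pata2}'' now that \eqref{4.32} is available, and that argument is exactly your scheme --- exponential decay of the forcing-free linear part via the functionals $\Phi,\Psi$ of Lemma \ref{43} with $\gamma\equiv 0$ and the Gronwall Lemma \ref{41}, plus a uniform $\h^1_t$ bound on the zero-data part $w$ obtained from the multiplier $2A\partial_t w$ with the corrected functional and the key source bound $\|f'(u)\partial_t u\|\leq \mathcal Q(R)$ coming from \eqref{4.32} and the embeddings \eqref{4.24}. Indeed, your two halves mirror, respectively, the paper's own treatment of $\bar v_n$ in Lemma \ref{412} and the proof of Lemma \ref{411} (including the Galerkin regularization and the absorption of the memory terms via $(M_4)$), so there is no gap.
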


Based on Lemma \ref{410}, we give a further delicate estimate.

\begin{lemma}\label{411} Let Assumption \ref{22} be valid, and  $z_\tau\in \h_\tau^{1}$ with $\|z_\tau\|_{\h_\tau^{1/3}}\leq R$. Then
\begin{equation*}
\|U(t, \tau)z_\tau\|^2_{\h_t^1}\leq \mathcal Q\left(R+\|z_\tau\|_{\h_\tau^1}\right)e^{-\omega (t-\tau)}+ \mathcal Q(R),\ \ \forall t\geq \tau.
\end{equation*}
\end{lemma}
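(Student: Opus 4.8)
The plan is to exploit the splitting \eqref{fenjie}, $U(t,\tau)z_\tau = U_0(t,\tau)z_\tau + U_1(t,\tau)z_\tau$ with $f_0=0$ and $f_1=f$, and thereby reduce the $\h^1_t$-estimate of the full solution to one for the \emph{linear} component $U_0$. By \eqref{4.34} of Lemma \ref{410} the nonlinear component is already uniformly bounded at the higher level, $\|U_1(t,\tau)z_\tau\|^2_{\h^1_t}\leq\mathcal Q(R)$ for all $t\geq\tau$; so, using $\|a+b\|^2_{\h^1_t}\leq 2\|a\|^2_{\h^1_t}+2\|b\|^2_{\h^1_t}$, it suffices to establish the decay
\begin{equation*}
\|U_0(t,\tau)z_\tau\|^2_{\h^1_t}\leq\mathcal Q\!\left(\|z_\tau\|_{\h^1_\tau}\right)e^{-\omega(t-\tau)},\qquad\forall t\geq\tau,
\end{equation*}
for $U_0(t,\tau)z_\tau=(v(t),\partial_t v(t),\xi^t)$ solving the homogeneous problem \eqref{4.13} (with $f_0=0$). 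Adding the two bounds then gives the assertion with the stated dependence $\mathcal Q(R+\|z_\tau\|_{\h^1_\tau})$.

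To obtain the decay of $U_0$ in $\h^1_t$ I would transcribe the energy scheme of Lemma \ref{49} to the level $\sigma=1$, now with no nonlinearity present. Working on Galerkin approximations (as in the Remark after Lemma \ref{43}), I test \eqref{4.13} with the multiplier $2A\,\partial_t v$ to get the identity for $L_1(t)=\|v(t)\|^2_2+\|\partial_t v(t)\|^2_1$, use Lemma \ref{42} with $\sigma=1$ to handle $\|\xi^t\|^2_{\mathcal M^1_t}$, and apply Lemma \ref{43} with $(p,\partial_t p,\psi^t)=(A^{1/2}v,A^{1/2}\partial_t v,A^{1/2}\xi^t)$ and forcing $\gamma\equiv0$ to produce the auxiliary functionals $\Phi,\Psi$. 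Setting $\mathcal L_1(t)=L_1(t)+\|\xi^t\|^2_{\mathcal M^1_t}$ and $\Lambda_1(t)=\mathcal L_1(t)+2\e[\Phi(t)+4\Psi(t)]$, the equivalence $\Lambda_1\simeq\|U_0\|^2_{\h^1_t}$ holds as in \eqref{4.26}, but now with no $\mathcal Q(R)$ correction since $\gamma=0$ removes the term $\langle\gamma,A^{1/2}v\rangle$ from $L_1$.

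Since the problem is homogeneous, the resulting integral inequality (the $\sigma=1$ analogue of \eqref{4.31}) carries no forcing $q_2$ and its coefficient $q_1$ may be taken as an arbitrarily small constant, so the integral $\mathcal J$ built from condition $(M_4)$ stays nonnegative for $\e$ small. The integral Gronwall Lemma \ref{41} with $q_2\equiv0$ then yields $\Lambda_1(t)\leq|\Lambda_1(\tau)|e^{-\omega(t-\tau)}$, i.e. the desired pure exponential decay with amplitude $\mathcal Q(\|z_\tau\|_{\h^1_\tau})$, and the triangle inequality combining it with the uniform $U_1$-bound of Lemma \ref{410} finishes the proof. The hard part is not the nonlinearity — it has been eliminated by the choice $f_0=0$ and relegated to the $\h^1$-bounded piece $U_1$ — but verifying the coercive dissipation of the \emph{time-dependent} memory system at the raised level $\sigma=1$: one must control, uniformly in $t$ and via only $(M_1)$, $(M_4)$, $(M_5)$, the cross terms coming from $\Phi$, $\Psi$ and from the memory coupling $\langle\xi^t,\partial_t v\rangle_{\mathcal M^1_t}$, so that the sign of $\mathcal J$ is preserved.
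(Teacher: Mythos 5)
Your proposal is correct, but it follows a genuinely different route from the paper. The paper does \emph{not} invoke the splitting \eqref{fenjie} here: it performs a direct energy estimate on the full nonlinear solution at the $\h^1$ level, taking the multiplier $2A\,\partial_t u$ in Eq.~\eqref{1.1}, building $L_1(t)=\|u\|_2^2+\|\partial_t u\|_1^2+2\langle\gamma,A^{1/2}u\rangle$ with $\gamma=A^{1/2}(f(u)-g)$ kept inside the functional, and taming the nonlinear term $2\langle f'(u)\partial_t u, Au\rangle\leq \mathcal Q(R)\|u\|_2$ by means of the uniform $\h^{1/3}$ bound \eqref{4.32} from Lemma \ref{49} together with the embeddings \eqref{4.24}; this produces the forcing $q_2=\mathcal Q(R)/\e$ in the integral Gronwall Lemma \ref{41}, whence the additive constant $\mathcal Q(R)$ in the conclusion. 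You instead delegate the entire nonlinearity to $U_1$, which Lemma \ref{410} (estimate \eqref{4.34}) already bounds uniformly in $\h^1_t$, and reduce the new work to a pure exponential decay estimate in $\h^1_t$ for the \emph{linear homogeneous} part $U_0$; since $A^{1/2}$ commutes with the linear memory equation, this is exactly the $\h^1$-lift of the computation the paper carries out for $\bar v$ in the proof of Lemma \ref{412} (Galerkin regularization, Lemma \ref{43} with $\gamma\equiv 0$, condition $(M_4)$ for the sign of the dissipation, then Lemma \ref{41} with $q_1=q_2=0$), so it goes through verbatim. What each approach buys: yours avoids all Sobolev-embedding manipulation of $f'(u)$ at the top energy level and makes the new estimate purely linear, at the price of relying on Lemma \ref{410} (whose omitted proof itself rests on \eqref{4.32}) and a triangle inequality; the paper's is self-contained at the $\h^1$ level and yields the bound for the full solution in one pass. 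Two cosmetic points in your write-up: with $\gamma\equiv 0$ the coefficient $q_1$ is literally zero (not merely ``arbitrarily small''), and the Gronwall rate you obtain is the small parameter $\e$, so the stated rate $\omega$ requires the same harmless convention ($\omega\leq\e$, or renaming the rate) that the paper itself uses.
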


\begin{proof}
For any $t\geq \tau$, we define the functionals of the solutions  $(u(t), \partial_t u(t), \eta^t)= U(t, \tau)z_\tau$ as follows:
\begin{align}
&\E_1(t, \tau)= \frac12\|U(t, \tau)z_\tau\|^2_{\h^1_t},\nonumber\\
&\mathcal L_1(t) = L_1(t)+ \|\eta^t\|^2_{{\mathcal M}^1_t}, \label{4.35}\\
& \Lambda_1(t)= \mathcal L_1(t)+ 2\e \Big[\Phi(t)+ 4\Psi(t)\Big],\label{4.36}
\end{align}
where $\e\in (0, 1]$,  $\Phi$ and $\Psi$ are defined as in \eqref{4.6}-\eqref{4.7}, with $
\left(p(t), \partial_t p(t), \psi^t\right)=  \left(A^{1/2}u(t), \partial_t A^{1/2}u(t), A^{1/2}\eta^t\right)$, and the functional
\begin{equation*}
L_1(t)=\|u(t)\|^2_2+ \|\partial_tu(t)\|^2_1+ 2\langle \gamma(t),  A^{1/2}u\rangle, \ \ \hbox{with}\ \   \gamma(t)=A^{1/2} \left(f(u)-g\right).
\end{equation*}
By \eqref{4.20},
\begin{equation*}
2|\langle \gamma(t),  A^{1/2}u\rangle|\leq 2\|\gamma(t)\|_{-1} \|A^{1/2}u\|_1\leq C\left[\|f(u)\|+\|g\|\right]\|u\|_2\leq \frac14 \|u(t)\|^2_2+\mathcal Q(R).
\end{equation*}
Consequently,
\begin{equation}\label{4.37}
\frac32 \E_1(t, \tau)- \mathcal Q(R)\leq \mathcal L_1(t)\leq \frac52\E_1(t, \tau)+ \mathcal Q(R).
\end{equation}
It follows from \eqref{4.8} that
 \begin{equation}\label{4.33'}
\left|\Phi(t)\right|+ \left|\Psi(t)\right|\leq C\E_1(t, \tau), \ \ \forall t\geq \tau,
\end{equation}
which combining with    \eqref{4.36}-\eqref{4.37} gives
\begin{equation}\label{4.39}
 \E_1(t, \tau)- \mathcal Q(R)\leq  \Lambda_1(t)\leq 3 \E_1(t, \tau)+ \mathcal Q(R)
\end{equation}
for $\e\in (0, 1]$ suitably small. Taking the multiplier $2A \partial_t u $ in Eq. \eqref{1.1} gives
\begin{equation*}
\begin{split}
 \frac{\d}{\d t}L_1(t)+ 2\langle \eta^t, \partial_tu(t)\rangle _{\mathcal M_t^1}
=\ & 2\left \langle f'(u(t))\partial_t u(t), A u(t)\right \rangle\\
\leq\ & C\left(1+ \|u(t)\|^2_{L^{18}}\right)\|\partial_t u(t)\|_{L^{18/7}}\|Au(t)\|\\
\leq\ & \mathcal Q(R)\|u(t)\|_2\leq \alpha \|u(t)\|^2_2 +\frac{\mathcal Q(R)}{\alpha}, \ \ \forall \alpha\in (0, 1],\ t\geq \tau,
\end{split}
\end{equation*}
where we have used condition \eqref{2.1},  Sobolev embedding   \eqref{4.24} and formula \eqref{4.32}.
  Integrating above inequality  over $[a, b]$, with $b\geq a\geq \tau$, gives
\begin{equation*}
 L_1(b)+ 2\int_a^b\langle \eta^t, \partial_t u(t)\rangle _{\mathcal M_t^1}\d t
\leq L_1(a)+ \alpha\int_a^b  \|u(t)\|^2_2\d t +  \frac{\mathcal Q(R)}{\alpha}(b-a), \ \ \forall \alpha\in (0, 1].
\end{equation*}
Applying  Lemma \ref{42} (taking $\sigma=1$ there) yields
\begin{equation*}
 \|\eta^b\|^2_{\mathcal M_b^1}-\int_a^b\int_0^\infty\left[\partial_t\mu_t(s)+ \partial_s \mu_t(s) \right]\|\eta^t(s)\|^2_2\d s\d t
\leq\|\eta^a\|^2_{\mathcal M_a^1}+2\int_a^b\langle\partial_t u(t), \eta^t\rangle_{\mathcal M_t^1}\d t.
\end{equation*}
Adding above  two inequalities together turns out
\begin{equation}\label{4.38}
\begin{split}
& \mathcal L_1(b)- \int_a^b\int_0^\infty\left[\partial_t\mu_t(s)+ \partial_s \mu_t(s) \right]\|\eta^t(s)\|^2_2\d s\d t\\
\leq\ &\mathcal L_1(a)+ \alpha\int_a^b \|u(t)\|^2_2\d t +  \frac{\mathcal Q(R)}{\alpha}(b-a), \ \ \forall \alpha\in (0, 1].
\end{split}
\end{equation}

Now, we give the estimates of the last term in the right hand side of the corresponding formulas \eqref{4.4}-\eqref{4.5}, respectively. By \eqref{4.20},
\begin{equation*}
-2\int_a^b \langle \gamma(t), A^{1/2}u(t)\rangle \d t\leq\frac {1}{20} \int_a^b \| u(t)\|^2_2\d t+ \mathcal Q(R)(b-a),
\end{equation*}
and by conditions $(M_1)$, $(M_5)$,
\begin{align*}
&4\int_a^b \frac{2}{\kappa(t)}\int_0^\infty\mu_t(s)\langle A^{1/2}\eta^t(s), \gamma(t)\rangle \d s\d t\\
\leq \  & 8\int_a^b \frac{1}{\kappa(t)}\|\gamma(t)\|_{-1} \left(\int_0^\infty\mu_t(s)\|\eta^t(s)\|_2 \d s\right)\d t\\
\leq \ &  \mathcal Q(R)(b-a)+ \mathcal Q(R) \int_a^b \kappa(t)\|\eta^t\|^2_{\mathcal M_t^{1/3}}\d t.
\end{align*}
Exploiting  \eqref{4.4}-\eqref{4.5} (taking  $\varpi=\frac{1}{20}$  there)  and making use of above two estimates, we obtain
\begin{equation}\label{4.38'}
\begin{split}
& \Phi(b)+ 4\Psi(b)+\int_a^b\Lambda_1(t)\d t +\frac15\int_a^b \| u(t)\|^2_2\d t \\
\leq\ & \Phi(a)+ 4\Psi(a)- 4M\int_a^b\int_0^\infty\left[\partial_t\mu_t(s)+ \partial_s \mu_t(s) \right]\|\eta^t(s)\|^2_2\d s\d t\\
&+\   \mathcal Q(R) \int_a^b \kappa(t)\|\eta^t\|^2_{\mathcal M_t^1}\d t + \mathcal Q(R)(b-a).
\end{split}
\end{equation}
   Taking  $\alpha=\frac{2\e}{5}$, the combination of  \eqref{4.38} and \eqref{4.38'} gives
\begin{equation}\label{4.39'}
  \Lambda_1(b)+2\e \int_a^b\Lambda_1(t)\d t+\mathcal J_1
\leq  \Lambda_1(a)+ \frac{\mathcal Q(R)}{\e}(b-a),
\end{equation}
where we have used condition $(M_4)$ and the fact that
\begin{equation*}
\begin{split}
\mathcal J_1=& -(1-8\e M)\int_a^b\int_0^\infty\left[\partial_t\mu_t(s)+ \partial_s \mu_t(s) \right]\|\eta^t(s)\|^2_2\d s\d t-2\e \mathcal Q(R) \int_a^b \kappa(t)\|\eta^t\|^2_{\mathcal M_t^1}\d t\\
\geq &\left(\delta(1-8\e M)-2\e \mathcal Q(R)\right)\int_a^b \kappa(t)\|\eta^t\|^2_{\mathcal M_t^1}\d t\geq 0
\end{split}
\end{equation*}
for $\e>0$ sufficiently  small.
Applying Lemma \ref{41} (with $q_1=0, q_2=\frac{\mathcal Q(R)}{\e}$ there)  to \eqref{4.39'} and making use of \eqref{4.39} give the conclusion of   Lemma \ref{411}.
\end{proof}
\bigskip

\begin{lemma}\label{412} Let Assumption \ref{22} be valid. Then for any $z_{1\tau}, z_{2 \tau}\in \h_\tau^1$ with $\|z_{i\tau}\|_{\h_{\tau}^1}\leq R$, $i=1, 2$,
\begin{equation*}
\|U(t, \tau)z_{1\tau}- U(t, \tau)z_{2\tau}\|^2_{\h_t}\leq  C e^{-\kappa (t-\tau)} \|z_{1\tau}-z_{2\tau}\|^2_{\h_\tau}+ \mathcal Q(R) e^{t-\tau} \int_\tau^t \|\bar u(s)\|^2\d s,
\end{equation*}
where $C$ and $\kappa$ are  positive constants independent of $R$, and
\begin{equation*}
\bar z(t)=\left(\bar u(t), \partial_t \bar u(t), \bar \eta^t\right)=z_1(t)-z_2(t)\ \ \hbox{and}\ \ z_i(t)=\left(u_i(t), \partial_t u_i(t), \eta_i^t\right)= U(t, \tau)z_{i \tau}, \ i=1,2.
\end{equation*}
\end{lemma}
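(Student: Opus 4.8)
The plan is to derive the asserted quasi-stability inequality for the difference $\bar z$ by the same energy-plus-auxiliary-functional scheme used in Lemmas \ref{49} and \ref{411}, now carried out at the base level $\sigma=0$ and with the nonlinearity treated as an external forcing. Subtracting the two copies of \eqref{1.1} satisfied by $z_1$ and $z_2$, the difference solves the linear memory equation $\partial_{tt}\bar u + A\bar u + \int_0^\infty\mu_t(s)A\bar\eta^t(s)\,\d s + \gamma(t)=0$ with $\gamma(t)=f(u_1(t))-f(u_2(t))$, together with the representation formula \eqref{1.2} for $\bar\eta^t$ and initial datum $\bar z(\tau)=z_{1\tau}-z_{2\tau}$. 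First I would form the natural energy $\mathcal L(t)=\|\partial_t\bar u(t)\|^2+\|\bar u(t)\|_1^2+\|\bar\eta^t\|^2_{\mathcal M_t}$ and the combined Lyapunov functional $\Lambda(t)=\mathcal L(t)+2\e[\Phi(t)+4\Psi(t)]$, with $\Phi,\Psi$ the functionals of Lemma \ref{43} evaluated on $(p,\partial_t p,\psi)=(\bar u,\partial_t\bar u,\bar\eta)$; exactly as in \eqref{4.26} and \eqref{4.39}, for $\e$ small $\Lambda$ is equivalent to $\|\bar z\|^2_{\h_t}$.

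The decisive new ingredient is the control of the forcing by the low-order norm $\|\bar u\|$. Because $\|z_{i\tau}\|_{\h_\tau^1}\leq R$, Lemma \ref{411} guarantees $\|z_i(t)\|^2_{\h_t^1}\leq \mathcal Q(R)$ for all $t\geq\tau$, so in particular $u_i(t)\in H^2\hookrightarrow L^\infty(\Omega)$ (valid in $\r^3$) with $\|u_i(t)\|_{L^\infty}\leq\mathcal Q(R)$. Writing $f(u_1)-f(u_2)=\big(\int_0^1 f'(\theta u_1+(1-\theta)u_2)\,\d\theta\big)\bar u$ and using the growth bound $|f'(s)|\leq C(1+|s|^2)$ coming from \eqref{2.1}, the uniform $L^\infty$ bound yields $\|\gamma(t)\|\leq\mathcal Q(R)\|\bar u(t)\|$. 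I would then run the two multiplier estimates of Lemma \ref{43} (with $\varpi$ fixed small) together with Lemma \ref{42} at $\sigma=0$, as in \eqref{4.27}--\eqref{4.31}. Every occurrence of $\gamma$ is estimated by $\mathcal Q(R)\|\bar u\|$: the term $-2\int_a^b\langle\gamma,\bar u\rangle$ and the memory term $\int_a^b\frac2{\kappa(t)}\int_0^\infty\mu_t\langle\bar\eta^t,\gamma\rangle$ both reduce, after Cauchy--Schwarz and Young, to $\mathcal Q(R)\int_a^b\|\bar u\|^2\,\d t$ plus an absorbable multiple of $\int_a^b\kappa\|\bar\eta^t\|^2_{\mathcal M_t}$, while the energy-level term $-2\langle\gamma,\partial_t\bar u\rangle$ is split as $\rho\|\partial_t\bar u\|^2+\rho^{-1}\mathcal Q(R)\|\bar u\|^2$ with $\rho$ chosen comparable to $\e$, so that $\rho\|\partial_t\bar u\|^2$ is swallowed by the dissipation produced by $4\Psi$.

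Collecting terms I would obtain an integral inequality of the form $\Lambda(b)+2\kappa\int_a^b\Lambda(t)\,\d t\leq\Lambda(a)+\mathcal Q(R)\int_a^b\|\bar u(t)\|^2\,\d t$ for all $b\geq a\geq\tau$, with a dissipation rate $\kappa>0$ fixed by the linear Lyapunov structure and hence independent of $R$; a Gronwall-type passage then gives $\Lambda(t)\leq e^{-\kappa(t-\tau)}\Lambda(\tau)+\mathcal Q(R)\int_\tau^t\|\bar u(s)\|^2\,\d s$, and the equivalence of $\Lambda$ with $\|\bar z\|^2_{\h_t}$ yields the asserted estimate (the factor $e^{t-\tau}$ in the statement being a harmless weakening of the sharper exponentially weighted integral, since $e^{t-\tau}\geq 1$). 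The main obstacle is precisely the bookkeeping that keeps $\kappa$ independent of $R$: the only $R$-dependent constant enters through $\gamma$, and it must be confined to the $\|\bar u\|^2$ error term, which forces the Young parameter on the $\langle\gamma,\partial_t\bar u\rangle$ term to be tuned to the size of the $\Psi$-dissipation rather than taken as an absolute constant. Securing the uniform $L^\infty$ bound on $u_i$, which is what makes $\|\gamma\|\leq \mathcal Q(R)\|\bar u\|$ possible, is exactly where the higher regularity of Lemma \ref{411} is indispensable.
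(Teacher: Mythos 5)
Your proposal is correct, but it takes a genuinely different route from the paper. The paper does \emph{not} run the Lyapunov machinery on the difference $\bar z$ directly: it splits each trajectory as $U(t,\tau)z_{i\tau}=U_0(t,\tau)z_{i\tau}+U_1(t,\tau)z_{i\tau}$ (with $f_0=0$, $f_1=f$), so that $\bar v=v_1-v_2$ solves the \emph{linear homogeneous} memory equation \eqref{4.40} with data $z_{1\tau}-z_{2\tau}$, while $\bar w=w_1-w_2$ solves the forced equation \eqref{4.45} with zero data. The functionals $\Phi,\Psi$ of Lemma \ref{43} are then applied only to $\bar v$ --- and, consistently with the paper's remark that Lemma \ref{43} is always used on Galerkin approximations, this application is justified by an explicit approximation and passage to the limit, which is painless precisely because the $\bar v$-equation is linear with $\gamma=0$; this yields the $R$-independent decay \eqref{4.44}. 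The forced part $\bar w$ needs only the plain multiplier $2\partial_t\bar w$, Lemma \ref{42}, the $L^\infty$ bound from Lemmas \ref{410}--\ref{411}, and a crude Gronwall, producing the $\mathcal Q(R)e^{t-\tau}\int_\tau^t\|\bar u(s)\|^2\d s$ term of \eqref{4.46}. Your unified approach instead keeps $\bar z$ whole, treats $\gamma=f(u_1)-f(u_2)$ as forcing with $\|\gamma\|\leq\mathcal Q(R)\|\bar u\|$, and tunes the Young parameter on $\langle\gamma,\partial_t\bar u\rangle$ to the $\e$-sized dissipation; your bookkeeping of why $\kappa$ stays $R$-independent is right, the coupling term in $\Psi$ is absorbable exactly as you say (using $(M_5)$), and the Gronwall passage can even be run through the paper's own Lemma \ref{41} with $q_1=0$, $q_2=\mathcal Q(R)\|\bar u\|^2$. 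Your version in fact gives a slightly stronger conclusion (exponentially weighted integral, no $e^{t-\tau}$ factor).

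The one point where your write-up is less careful than the paper's, and which you should address explicitly, is the regularity needed to invoke Lemma \ref{43}: that lemma is stated for ``sufficiently regular'' solutions, and you apply it to the difference of two \emph{nonlinear} trajectories, for which the paper's clean trick (Galerkin for a linear homogeneous system, where differences of approximants again solve the same system) is not directly available. The gap is fillable in your setting --- since $z_{i\tau}\in\h^1_\tau$, Lemma \ref{411} keeps both trajectories uniformly bounded in $\h^1_t$, so $\partial_{tt}\bar u\in L^\infty(\tau,T;H)$ and all the multiplications and integrations by parts behind \eqref{4.4}--\eqref{4.5} make sense for $\bar z$ itself --- but either this verification or a consistent approximation of the forced difference equation must appear in a complete proof. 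This avoidance of exactly that justification is what the paper's decomposition buys; what your approach buys is a shorter, single-pass estimate with a sharper constant structure.
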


 \begin{remark} Lemma \ref{412} implies that the process $U(t, \tau): \mathbb{B}^1_\tau(R)\subset\h_\tau \rightarrow \h_t$ is  quasi-stable for all $R>0$.
\end{remark}

\begin{proof}[\textbf{Proof of Lemma \ref{412}}] We still split the solution $U(t, \tau)z_{i \tau}$  into the sum
\begin{equation*}
\begin{split}
U(t, \tau)z_{i \tau} &= U_0(t, \tau)z_{i \tau}+ U_1(t, \tau)z_{i \tau}\\
& =\left(v_i(t), \partial_t v_i(t), \xi_i^t \right)+\left(w_i(t), \partial_t w_i(t), \zeta_i^t \right), \ \ i=1, 2,
\end{split}
\end{equation*}
where $U_0(t, \tau)z_{i \tau}$ and $U_1(t, \tau)z_{i \tau}$ solves problem  \eqref{4.13} and \eqref{4.14} (with $f_0=0$ and  $f_1=f$ there),  respectively.
Then  $\bar v =v_1 -v_2 $ solves
\begin{equation}\label{4.40}
\left\{
  \begin{array}{ll}
    \partial_{tt}\bar v+A\bar v +\int_0^\infty\mu_t(s)A\bar\xi^t(s)\d s=0,\ \ t>\tau,  \\
    \left(\bar v(\tau), \partial_t \bar v(\tau), \bar \xi^\tau \right)=z_{1\tau}-z_{2\tau},
  \end{array}
\right.
\end{equation}
with
\begin{equation*}
\bar \xi^t(s)=\left\{
            \begin{array}{ll}
              \bar v(t)-\bar v(t-s), & s\leq t-\tau, \\
              \bar \xi_\tau(s-t+\tau)+\bar v(t)-\bar v_\tau, & s>t-\tau.
            \end{array}
          \right.
\end{equation*}
And   $\bar w=w_1 -w_2 $ solves
\begin{equation}\label{4.45}
\left\{
  \begin{array}{ll}
    \partial_{tt}\bar w +A\bar w +\int_0^\infty\mu_t(s)A\bar\zeta^t(s)\d s+f(u_1)-f (u_2) =0,\ \ t>\tau, \\
    \left(\bar w(\tau), \partial_t \bar w(\tau), \bar \zeta^\tau \right)=0,
  \end{array}
\right.
\end{equation}
with
\begin{equation*}
\bar \zeta^t(s)=\left\{
            \begin{array}{ll}
              \bar w(t)-\bar w(t-s), & s\leq t-\tau, \\
              \bar \zeta_\tau(s-t+\tau)+\bar w(t)-\bar w_\tau, & s>t-\tau.
            \end{array}
          \right.
\end{equation*}
It follows from Lemmas \ref{410} and \ref{411} that
\begin{equation}\label{4.41}
\|z_i(t)\|_{\h^1_t}+ \|U_0(t, \tau)z_{i \tau}\|_{\h^1_t}+ \|U_1(t, \tau)z_{i \tau}\|_{\h^1_t}\leq \mathcal Q(R), \ \ t\geq \tau.
\end{equation}
 Hence  we can take the multiplier $2\partial_t \bar v$ in Eq. \eqref{4.40} and obtain
\begin{align}
&\|\bar v(b)\|^2_1+ \|\partial_t \bar v(b)\|^2+2\int_a^b\langle \bar \xi^t, \partial_t \bar v(t)\rangle_{\mathcal M_t}\d t=\|\bar v(a)\|^2_1+ \|\partial_t \bar v(a)\|^2,\ \forall b\geq a\geq \tau.\label{4.41'}
\end{align}
By Lemma \ref{42} (taking $\sigma=0$ there),
\begin{equation}\label{4.43'}
\|\bar\xi^b\|^2_{\mathcal M_b}-\int_a^b\int_0^\infty\left[\partial_t\mu_t(s)+ \partial_s \mu_t(s) \right]\|\bar\xi^t(s)\|^2_1\d s\d t
\leq\|\bar\xi^a\|^2_{\mathcal M_a}+2\int_a^b\langle\partial_t \bar v(t), \bar\xi^t\rangle_{\mathcal M_t}\d t.
\end{equation}
The combination of   \eqref{4.41'} and \eqref{4.43'} gives
\begin{equation}\label{4.42}
\begin{split}
&\|(\bar v(b), \partial_t \bar v(b), \bar \xi^b )\|^2_{\h_b}-\int_a^b\int_0^\infty\left[\partial_t\mu_t(s)+ \partial_s \mu_t(s) \right]\|\bar\xi^t(s)\|^2_1\d s\d t\\
\leq\ & \|\left(\bar v(a), \partial_t \bar v(a), \bar \xi^a \right)\|^2_{\h_a},\ \forall b\geq a\geq \tau.
\end{split}
\end{equation}

In order to obtain the sufficient regularity of the solutions needed for applying Lemma \ref{43}, we use the following approximating technique.
We denote by
\begin{equation*}
\left(v_{i n}(t), \partial_tv_{i n}(t), \xi^t_{in} \right), \  \  i=1, 2
\end{equation*}
the   Galerkin approximations of  $(v_{i}(t), \partial_tv_{i}(t), \xi^t_{i}),   i=1, 2$, with initial data
\begin{equation}\label{06091}
\left(v_{i n}(\tau), \partial_tv_{i n}(\tau), \xi^\tau_{in} \right)\rightarrow z_{i\tau}\ \ \hbox{in}\ \ \h_\tau, \ \ i=1, 2.
\end{equation}
It follows from \eqref{4.42}-\eqref{06091} and condition $(M_4)$ that
\begin{equation}\label{06092}
\begin{split}
& \lim_{n\rightarrow \infty} \|\left(v_{i n}(t), \partial_tv_{i n}(t), \xi^t_{in} \right)-\left(v_i(t), \partial_t v_i(t), \xi_i^t \right)\|_{\h_t}\\
\leq\ & \lim_{n\rightarrow \infty} \|\left(v_{i n}(\tau), \partial_tv_{i n}(\tau), \xi^\tau_{in} \right)- z_{i\tau}\|_{\h_\tau}=0,\ \ \forall t\geq \tau, \ i=1, 2.
\end{split}
\end{equation}

For every $n\in\mathbb N$, let $\bar v_{n}=v_{1n}-v_{2n}$, $\bar \xi_{\tau n}=\xi^\tau_{1n}-\xi^\tau_{in}$ and
\begin{equation*}
\bar \xi_n^t(s)=\left\{
            \begin{array}{ll}
              \bar v_n(t)-\bar v_n(t-s), & s\leq t-\tau, \\
              \bar \xi_{\tau n}(s-t+\tau)+\bar v_n(t)-\bar v_{\tau n}, & s>t-\tau.
            \end{array}
          \right.
\end{equation*}
Obviously, formula \eqref{4.42} holds for all $\left( \bar v_{n}, \partial_t\bar v_{n}, \bar \xi^t_n\right)$, $n\in\mathbb N$.

For every $\e\in (0, 1]$, we introduce the functional
\begin{equation*}
\Lambda^n_v(t)=\|(\bar v_n(t), \partial_t \bar v_n(t), \bar \xi_n^t )\|^2_{\h_t}+ 2\e\Big[\Phi_n(t)+ 4\Psi_n(t)\Big], \ \ n\in\mathbb N,
\end{equation*}
where the functionals $\Phi_n$ and $\Psi_n$ are defined by formulas   \eqref{4.6}-\eqref{4.7}, with
\begin{equation*}
\left(p(t), \partial_t p(t), \psi^t\right)= \left(\bar v_n(t), \partial_t \bar v_n(t), \bar \xi_n^t \right)\ \ \hbox{and}\ \ \gamma(t)=0\ \ \hbox{in \eqref{4.1}}.
\end{equation*}
Thus, it follows from formula  \eqref{4.8} that
\begin{equation}\label{4.43}
\frac12 \|(\bar v_n(t), \partial_t \bar v_n(t), \bar \xi_n^t )\|^2_{\h_t}\leq \Lambda^n_v(t)\leq \frac32 \|(\bar v_n(t), \partial_t \bar v_n(t), \bar \xi_n^t )\|^2_{\h_t}, \ \ n\in\mathbb N
\end{equation}
for $\e>0$ sufficiently small. And by  Lemma \ref{43} (taking $\varpi=1/20$  and $\gamma(t)=0$ there) and a simple calculation we obtain
\begin{equation}\label{4.47'}
\begin{split}
& \Phi_n(b)+ 4\Psi_n(b)+\frac74\int_a^b\|\bar v_n(t)\|^2_1\d t +2\int_a^b \| \partial_t \bar v_n(t)\|^2\d t \\
\leq\ & \Phi_n(a)+ 4\Psi_n(a)- 4M\int_a^b\int_0^\infty\left[\partial_t\mu_t(s)+ \partial_s \mu_t(s) \right]\|\bar \xi_n^t(s)\|^2_1\d s\d t+ C\int_a^b \kappa(t)\|\bar \xi_n^t\|^2_{\mathcal M_t}\d t.
\end{split}
\end{equation}
The combination of \eqref{4.42} and \eqref{4.47'} gives
\begin{equation*}
\Lambda^n_v(b)+2\e\int_a^b \Lambda^n_v(t)\d t \leq \Lambda^n_v(a)
\end{equation*}
for  $\e>0$ suitably small, where we have used condition $(M_4)$. Hence  applying Lemma \ref{41}, with $q_1=q_2=0$ there, and making use of \eqref{4.43}, we obtain
\begin{equation}\label{4.44'}
\|(\bar v_n(t), \partial_t \bar v_n(t), \bar \xi_n^t )\|^2_{\h_t}\leq 3 e^{-\e(t-\tau)}\|(\bar v_n(\tau), \partial_t \bar v_n(\tau), \bar \xi_n^\tau )\|^2_{\h_\tau}, \ \ \forall t\geq \tau, \ n\in\mathbb N.
\end{equation}
Thus, by \eqref{06091}-\eqref{06092} and formula \eqref{4.44'}, we have
\begin{equation}\label{4.44}
\begin{split}
\|(\bar v(t), \partial_t \bar v(t), \bar \xi^t )\|^2_{\h_t}
=\ & \lim_{n\rightarrow \infty} \|(\bar v_n(t), \partial_t \bar v_n(t), \bar \xi_n^t )\|^2_{\h_t}\\
\leq\ & 3 e^{-\e(t-\tau)} \lim_{n\rightarrow \infty}\|(\bar v_n(\tau), \partial_t \bar v_n(\tau), \bar \xi_n^\tau )\|^2_{\h_\tau}\\
=\ & 3 e^{-\e(t-\tau)}\|z_{1\tau}-z_{2\tau}\|^2_{\h_\tau}.
\end{split}
\end{equation}
Taking into account   estimate \eqref{4.41}, we can use multiplier $2\partial_t \bar w$ in Eq. \eqref{4.45} and arrive at
\begin{align*}
& \frac{\d}{\d t}\left[\|\bar w(t)\|^2_1+ \|\partial_t \bar w(t)\|^2\right]+2\langle \bar \zeta^t, \partial_t \bar w(t)\rangle_{\mathcal M_t}\\
=\ & 2\langle f\left(u_2\right)-f\left(u_1\right), \partial_t \bar w(t)\rangle\\
\leq\ & C\left(1+ \|u_1(t)\|^2_{L^\infty}+ \|u_2(t)\|^2_{L^\infty}\right)\|\bar u(t)\| \|\partial_t \bar w(t)\|\\
\leq\ & \mathcal Q(R) \|\bar u(t)\|^2+ \|\partial_t \bar w(t)\|^2,\\
\end{align*}
which implies that
\begin{equation*}
 \|\bar w(t)\|^2_1+ \|\partial_t \bar w(t)\|^2+2\int_\tau^t \langle \bar \zeta^s, \partial_s \bar w(s)\rangle_{\mathcal M_s}ds
\leq\mathcal Q(R)\int_\tau^t (\|\bar u(s)\|^2+ \|\partial_t \bar w(s)\|^2)ds,\ \ \forall t\geq \tau,
\end{equation*}
where we have used condition \eqref{2.1}, estimate \eqref{4.41} and the Sobolev embedding $H^2 \hookrightarrow L^\infty$. Thus,  making use of condition $(M_4)$ and Lemma \ref{42} (with $\sigma=0$ there), we obtain
 \begin{equation*}
  \|(\bar w(t), \partial_t \bar w(t), \bar \zeta^t )\|^2_{\h_t}\leq \mathcal Q(R)\int_\tau^t \|\bar u(s)\|^2\d s+\int_\tau^t\|(\bar w(s), \partial_t \bar w(s), \bar \zeta^s )\|^2_{\h_s}\d s.
 \end{equation*}
 Applying the Gronwall inequality to above estimate gives
\begin{equation}\label{4.46}
\|(\bar w(t), \partial_t \bar w(t), \bar \zeta^t )\|^2_{\h_t}\leq \mathcal Q(R)e^{t-\tau}\int_\tau^t \|\bar u(s)\|^2\d s, \ \ \forall t\geq \tau.
\end{equation}
The combination of \eqref{4.44} and \eqref{4.46} yields
\begin{equation*}
\begin{split}
&\|U(t, \tau)z_{1\tau}-U(t, \tau)z_{2\tau}\|^2_{\h_t}\\
\leq \ & C\left[\|(\bar v(t), \partial_t \bar v(t), \bar \xi^t )\|^2_{\h_t}+ \|(\bar w(t), \partial_t \bar w(t), \bar \zeta^t )\|^2_{\h_t} \right]\\
\leq\ & Ce^{-\e(t-\tau)}\|z_{1\tau}-z_{2\tau}\|^2_{\h_\tau}+ \mathcal Q(R)e^{t-\tau}\int_\tau^t \|\bar u(s)\|^2\d s, \ \ t\geq \tau,
\end{split}
\end{equation*}
which completes the proof.
\end{proof}

\section{Proof of the main result}
The purpose of this section is to  prove Theorem \ref{eaexist} by applying the abstract criteria obtained in Section 2. This argument  is  challenging because of the hyperbolicity of the  problem which results in without any additional regularity of its solutions, so we put forward a new technique   to overcome this difficulty. To this end, we first  establish a specially pullback attracting family.

\begin{lemma}\label{52}Let Assumption \ref{22} be valid. Then there exists a family $\{B(t)\}_{t\in\r}$, with $B(t)\subset \h_t$ for each $t\in \r$,  possessing the following properties:
\begin{description}
  \item (i)  for every $t\in\r$, the section $B(t)$ is closed in $\h_t$ and
        \begin{equation}\label{5.1}
        B(t)\subset \mathbb B_t(\mathcal R_0)\cap \mathbb B^1_t(\mathcal R)
         \end{equation}
       for some constants $\mathcal R>0$ and $\mathcal R_0>R_1$, where $R_1$ is given by Lemma \ref{45};
  \item (ii)  there exist positive constants $\kappa$ and $\tau_1$ such that
   \begin{equation}\label{5.2}
   \mathrm{dist}_{\h_t}\left(U(t, \tau)\mathbb B_\tau(R_1), B(t)\right)\leq \mathcal Q(R_1)e^{-\kappa(t-\tau)}, \ \ \forall \tau\leq t-\tau_1, \ t\in\r;
   \end{equation}
   \item (iii)  there exists a positive constant $T_1$ such that
    \begin{equation}\label{5.3}
      U(t, \tau)B(\tau)\subset B(t), \ \ \forall \tau\leq t-T_1,\ t\in\r.
      \end{equation}
\end{description}
\end{lemma}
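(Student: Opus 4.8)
The plan is to construct $\{B(t)\}$ in three nested stages: two smoothing steps that raise the regularity from $\h_t$ to $\h_t^{1/3}$ and then to $\h_t^1$ while keeping exponential attraction in the $\h_t$-norm, followed by a passage to a hull of forward orbits that forces forward invariance without destroying the gains.

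First I would produce an $\h^{1/3}$-bounded attracting family from the $\h$-absorbing ball $\mathbb B_t(R_1)$ of Lemma \ref{45}. Using the splitting $U=U_0+U_1$ of Lemma \ref{48}, set
\[
V(t)=\Big[\bigcup_{\tau\le t-1}U_1(t,\tau)\mathbb B_\tau(R_1)\Big]_{\h_t}.
\]
Estimate \eqref{4.16} shows $V(t)$ is bounded in $\h_t^{1/3}$ by a constant $\rho=\mathcal Q(R_1)$, and it is closed by construction. Since $U_1(t,\tau)z_\tau\in V(t)$ while $\|U(t,\tau)z_\tau-U_1(t,\tau)z_\tau\|_{\h_t}=\|U_0(t,\tau)z_\tau\|_{\h_t}\le\mathcal Q(R_1)e^{-(\omega/2)(t-\tau)}$ by \eqref{4.15}, the family $V$ attracts $\mathbb B_\tau(R_1)$ exponentially in $\h_t$, and crucially this uses no Lipschitz bound. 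Because $V(s)$ is $\h^{1/3}$-bounded, the analogous splitting underlying Lemma \ref{410} (now with $f_0=0$, $f_1=f$) applies to its elements; setting
\[
W(t)=\Big[\bigcup_{s\le t-1}U_1(t,s)V(s)\Big]_{\h_t},
\]
estimate \eqref{4.34} gives $W(t)\subset\mathbb B_t^1(\mathcal R_{\mathcal W})$ with $\mathcal R_{\mathcal W}=\mathcal Q(\rho)$, while \eqref{4.33} gives $\|U(t,s)\zeta-U_1(t,s)\zeta\|_{\h_t}\le\mathcal Q(\rho)e^{-(\omega/2)(t-s)}$, so $W$ exponentially attracts the forward orbit of $V$.

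The \emph{main obstacle} is to combine these two attractions into exponential attraction of $\mathbb B_\tau(R_1)$ by $W$, since the only available comparison of two trajectories is the Lipschitz estimate \eqref{2.6} (or Lemma \ref{412}), whose constant $\mathcal Q(R)e^{\mathcal Q(R)(t-\tau)}$ grows exponentially; one therefore cannot approximate the genuine solution by the smoothed trajectory $U_1$ over a long interval. I would resolve this exactly as in the proof of Corollary \ref{ea2}: given $z_\tau\in\mathbb B_\tau(R_1)$, split $[\tau,t]$ at $s=\tau+\theta(t-\tau)$, pick a near-optimal $\zeta_s\in V(s)$ from the first attraction, and bound $\mathrm{dist}_{\h_t}(U(t,\tau)z_\tau,W(t))$ by $e^{K(1-\theta)(t-\tau)}\mathcal Q(R_1)e^{-(\omega/2)\theta(t-\tau)}+\mathcal Q(\rho)e^{-(\omega/2)(1-\theta)(t-\tau)}$, where $K$ is the exponential Lipschitz rate of \eqref{2.6} on the relevant bounded sets. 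Choosing $\theta$ close to $1$ so that $K(1-\theta)<(\omega/2)\theta$ makes both terms decay, which yields a rate $\kappa_0>0$ for the attraction of $\mathbb B_\tau(R_1)$ by $W$.

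Finally, to secure forward invariance without losing regularity or attraction, I would pass to the hull of forward orbits of $W$,
\[
B(t)=\Big[\bigcup_{\sigma\ge T_1}U(t,t-\sigma)W(t-\sigma)\Big]_{\h_t},\qquad T_1=1.
\]
Forward invariance $U(t,\tau)B(\tau)\subset B(t)$ for every $t\ge\tau$, hence in particular \eqref{5.3}, follows from the process identity $U(t,\tau)U(\tau,s)=U(t,s)$ together with the continuity of $U(t,\tau)$ on bounded sets. Since $W(s)$ is bounded in $\h_s^1$, hence in $\h_s^{1/3}$, the dissipativity estimate of Lemma \ref{411} keeps the forward orbits bounded in $\h_t^1$ by a constant $\mathcal R$ depending only on $\mathcal R_{\mathcal W}$, so $B(t)\subset\mathbb B_t^1(\mathcal R)$ and, the $\h_t$-norm being dominated by the $\h_t^1$-norm, also $B(t)\subset\mathbb B_t(\mathcal R_0)$ for a suitable $\mathcal R_0>R_1$; this is \eqref{5.1}. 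For \eqref{5.2} I observe that now $U(t,s)W(s)\subset B(t)$ holds exactly for $t-s\ge T_1$, so the second leg of the transitivity argument has zero distance over the fixed lag $T_1$; combining the attraction of $W$ from the previous paragraph with the fixed (hence bounded) Lipschitz constant over $[t-T_1,t]$ gives $\mathrm{dist}_{\h_t}(U(t,\tau)\mathbb B_\tau(R_1),B(t))\le\mathcal Q(R_1)e^{-\kappa(t-\tau)}$ with $\kappa=\kappa_0$ and $\tau_1=T_1+1$. The remaining verifications, namely the closedness of each hull and the bookkeeping of the constants $\rho,\mathcal R_{\mathcal W},\mathcal R,\mathcal R_0$, are routine.
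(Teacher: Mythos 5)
Your proposal is correct, and its analytic core coincides with the paper's: the same two-step smoothing via the splittings of Lemmas \ref{48} and \ref{410} (raising $\h_t \to \h_t^{1/3} \to \h_t^{1}$ while keeping exponential attraction in $\h_t$), and the same transitivity trick for composing exponential attraction with the exponentially growing Lipschitz constant of \eqref{2.6} by splitting $[\tau,t]$ with a parameter $\theta$ — your choice ``$\theta$ close to $1$'' is exactly the paper's choice of a small $\theta$ in \eqref{5.8}, up to relabeling $\theta \leftrightarrow 1-\theta$. Where you genuinely diverge is the manufacture of the forward-invariant family. The paper takes $B(t)=\mathbb B_t(\mathcal R_3)\cap \mathbb B^{1/3}_t(\mathcal R_5)\cap \mathbb B^1_t(\mathcal R_7)$, an intersection of three balls whose radii are chained so that each level maps into itself after a fixed time; this requires three dissipative estimates (Lemmas \ref{45}, \ref{49} and \ref{411}, each needing the lower-level bound as input) but yields sets that are closed by construction and carry their regularity bounds explicitly. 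You instead take the hull of forward orbits, $B(t)=\big[\bigcup_{\sigma\ge T_1}U(t,t-\sigma)W(t-\sigma)\big]_{\h_t}$, which gets invariance structurally from the cocycle identity plus Lipschitz continuity (indeed for \emph{all} $t\ge\tau$, stronger than \eqref{5.3}), and needs only Lemma \ref{411} to keep the hull uniformly $\h^1$-bounded, bypassing Lemma \ref{49} at this stage; the attraction \eqref{5.2} then follows cleanly because $U(t,t-T_1)W(t-T_1)\subset B(t)$ exactly, so only a fixed-lag Lipschitz constant enters. The price of your route is the point you dismiss as routine but should state: the sets $V(t)$, $W(t)$, $B(t)$ are closures \emph{in the weaker norm} $\h_t$ of sets bounded in $\h^{1/3}_t$ or $\h^1_t$, and you repeatedly apply Lemmas \ref{410} and \ref{411} to their elements; this needs the fact that such closures retain the higher-regularity bound, which holds here by weak compactness and weak lower semicontinuity of the norm in the Hilbert spaces $\h^{1/3}_t$, $\h^1_t$ (together with the continuous embeddings into $\h_t$), and that fact should be invoked explicitly since it is what licenses every application of the splitting lemmas to points of $V(s)$ and $W(s)$.
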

\begin{proof} For any $\tau\in\r$ and $z_\tau\in \mathbb B_\tau(R_1)$,  it follows from Lemma \ref{48} that
\begin{align*}
& \|U_0(t, \tau)z_\tau\|^2_{\h_t}\leq \mathcal Q(R_1)e^{-\omega (t-\tau)}\ \ \hbox{and}\ \  \|U_1(t, \tau)z_\tau\|^2_{\h^{1/3}_t}\leq \mathcal Q(R_1),\ \ \forall t\geq \tau,
\end{align*}
  which implies that there exists a positive constant $\mathcal R_1$ depending only on $R_1$  such that
 \begin{equation}\label{5.4}
   \mathrm{dist}_{\h_t}\left(U(t, \tau)\mathbb B_\tau(R_1),\  \mathbb B^{1/3}_t(\mathcal R_1)\right)\leq \mathcal Q(R_1)e^{-\omega(t-\tau)}, \ \ \forall t\geq \tau.
   \end{equation}

Similarly, we infer from Lemma \ref{410} that, for any $z_\tau\in \mathbb B^{1/3}_\tau(\mathcal R_1)$,
\begin{align*}
&\|U_0(t, \tau)z_\tau\|^2_{\h_t}\leq \mathcal Q(\mathcal R_1) e^{-\omega (t-\tau)}\ \ \hbox{and}\ \  \|U_1(t, \tau)z_\tau\|^2_{\h^1_t}\leq \mathcal Q(\mathcal R_1),\ \ t\geq \tau.
\end{align*}
 Since $\mathcal R_1$ depends only on $R_1$, we can find a constant $\mathcal R_2$ depending only on $R_1$ such that
\begin{equation}\label{5.5}
   \mathrm{dist}_{\h_t}\left(U(t, \tau)\mathbb B^{1/3}_\tau(\mathcal R_1),\  \mathbb B^1_t(\mathcal R_2)\right)\leq \mathcal Q(R_1)e^{-\omega(t-\tau)}, \ \ \forall t\geq \tau.
   \end{equation}

It follows from Definition \ref{26} and Lemma \ref{45} that there exists a positive constant $e(R_1)$  depending only on $R_1$ such that
\begin{equation}\label{5.6}
U(t, \tau)\mathbb B_\tau (R_1)\subset \mathbb B_t(R_1), \ \ \forall \tau\leq t-e(R_1).
\end{equation}

Let $\theta=\frac{\omega}{\mathcal Q(R_1)+2\omega}$. Obviously,
\begin{equation*}
\theta\in (0, 1)\ \ \hbox{and}\ \ -\omega \theta=-\omega+ \left(\mathcal Q(R_1)+\omega\right)\theta.
\end{equation*}
We infer from formula \eqref{5.6} that
\begin{equation}\label{5.7}
U\left((1-\theta)t+\theta\tau, \tau\right)\mathbb B_\tau (R_1)\subset \mathbb B_{(1-\theta)t+\theta\tau}(R_1), \ \ \forall \tau\leq t-e_1,
\end{equation}
where $e_1=\frac{e(R_1)}{1-\theta}>0$. Thus, it follows from Theorem \ref{existence} and formula \eqref{5.4}-\eqref{5.7} that
\begin{align}
&\mathrm{dist}_{\h_t}\left(U(t, \tau)\mathbb B_\tau (R_1), \mathbb B_t^1(\mathcal R_2)\right)\nonumber\\
\leq \ &\mathrm{dist}_{\h_t}\left(U\left(t, (1-\theta)t+\theta \tau\right)U\left((1-\theta)t+\theta \tau, \tau\right)\mathbb B_\tau (R_1),  U\left(t, (1-\theta)t+\theta \tau\right)\mathbb B_{(1-\theta)t+\theta \tau}^{1/3}(\mathcal R_1)\right)\nonumber\\
&+ \mathrm{dist}_{\h_t}\left(U\left(t, (1-\theta)t+\theta \tau\right)\mathbb B_{(1-\theta)t+\theta \tau}^{1/3}(\mathcal R_1),  \mathbb B_t^1(\mathcal R_2) \right)\label{5.8}\\
\leq\ &\mathcal Q(R_1) \exp\{\mathcal Q(R_1)\theta (t-\tau)\}\mathrm{dist}_{\h_{(1-\theta)t+\theta \tau}}\left(U\left((1-\theta)t+\theta \tau, \tau\right)\mathbb B_\tau (R_1),  \mathbb B_{(1-\theta)t+\theta \tau}^{1/3}(\mathcal R_1)\right)\nonumber\\
&+ \mathcal Q(R_1)e^{-\omega \theta (t-\tau)}\nonumber\\
\leq \ & \mathcal Q(R_1)\exp\{\left[-\omega+ \left(\mathcal Q(R_1)+ \omega\right)\theta \right](t-\tau)\}+ \mathcal Q(R_1)e^{-\omega \theta (t-\tau)}\nonumber\\
\leq \ & \mathcal Q(R_1)e^{-\omega \theta (t-\tau)},\ \ \forall \tau\leq t-e_1.\nonumber
\end{align}

For every $z\in \mathbb B_t^1(\mathcal R_2)$,
\begin{equation*}
\|z\|_{\h_t}\leq \lambda_1^{-1/2} \|z\|_{\h_t^1}\leq \lambda_1^{-1/2} \mathcal R_2,\ \ \forall t\in\r,
\end{equation*}
which implies
\begin{equation}\label{5.9}
\mathbb B_t^1\left(\mathcal R_2\right)\subset \mathbb B_t\left(\lambda_1^{-1/2}\mathcal R_2\right) \subset \mathbb B_t(\mathcal R_3) \ \ \hbox{and}\ \ \mathbb B_t(R_1) \subset \mathbb B_t(\mathcal R_3),\ \ \forall  t\in\r,
\end{equation}
 where $\mathcal R_3=R_1+ \lambda_1^{-1/2}\mathcal R_2$ depends only on $R_1$. By Lemma \ref{45} and formula \eqref{5.9}, there exists a constant $e_2>0$ such that
\begin{equation}\label{5.10}
  U(t, \tau)\mathbb B_\tau(\mathcal R_3)\subset\mathbb B_t(R_1) \subset \mathbb B_t(\mathcal R_3), \ \ \forall \tau\leq t-e_2.
\end{equation}
It follows from Lemma \ref{49} that for any  $z_\tau\in \mathbb B_\tau(\mathcal R_3)\cap \h_\tau^{1/3}$,
\begin{equation}\label{5.11}
\|U(t, \tau)z_\tau\|^2_{\h_t^{1/3}}\leq \mathcal Q\left(\mathcal R_3+\|z_\tau\|_{\h_\tau^{1/3}}\right)e^{-\omega(t-\tau)}+\mathcal R_4, \ \ \forall t\geq \tau,
\end{equation}
where the positive constant $\mathcal R_4$ depends only on $R_1$.

Similarly, for every  $z\in \mathbb B_t^1(\mathcal R_2)$, we have
\begin{equation*}
\|z\|_{\h^{1/3}_t}\leq \lambda_1^{-1/3} \|z\|_{\h_t^1}\leq \lambda_1^{-1/3}\mathcal R_2, \ \ \forall t\in\r,
\end{equation*}
which implies
\begin{equation}\label{5.12}
\mathbb B_t^1\left(\mathcal R_2\right)\subset \mathbb B^{1/3}_t\left(\lambda_1^{-1/3} \mathcal R_2\right) \subset \mathbb B^{1/3}_t(\mathcal R_5),\ \ \forall t\in\r,
\end{equation}
where $\mathcal R_5=\mathcal R_4+ \lambda_1^{-1/3} \mathcal R_2$ depends only on $R_1$. It follows from  formula \eqref{5.11} that there exists a positive constant $e_3$ such that
\begin{equation}\label{5.13}
U(t, \tau)\left[\mathbb B_\tau\left(\mathcal R_3\right) \cap \mathbb B^{1/3}_\tau(\mathcal R_5) \right]\subset \mathbb B^{1/3}_t(\mathcal R_5), \ \ \forall \tau\leq t-e_3.
\end{equation}

Lemma  \ref{411} shows that for any $z_\tau\in
\mathbb B^{1/3}_\tau(\mathcal R_5)\cap \h_\tau^1$,
\begin{equation}\label{5.14}
\|U(t, \tau)z_\tau\|^2_{\h_t^1}\leq \mathcal Q\left(\mathcal R_5+\|z_\tau\|_{\h_\tau^1}\right)e^{-\omega(t-\tau)}+\mathcal R_6, \ \ \forall t\geq \tau,
\end{equation}
where the  positive constant $\mathcal R_6$ depends only on $R_1$. Obviously,
\begin{equation}\label{5.15}
\mathbb B^1_t(\mathcal R_2)\subset \mathbb B^1_t(\mathcal R_7)\ \ \hbox{with}\ \ \mathcal R_7=\mathcal R_2+ \mathcal R_6, \ \ \forall t\in\r.
\end{equation}
 Thus formula \eqref{5.14} implies that there is a positive constant $e_4$ such that
\begin{equation}\label{5.16}
U(t, \tau)\left[\mathbb B^{1/3}_\tau\left(\mathcal R_5\right) \cap \mathbb B^1_\tau(\mathcal R_7) \right]\subset \mathbb B^1_t(\mathcal R_7), \ \ \forall \tau\leq t-e_4.
\end{equation}

Let
\begin{equation*}
B(t)=\mathbb B_t\left(\mathcal R_3\right) \cap \mathbb B^{1/3}_t(\mathcal R_5) \cap \mathbb B^1_t(\mathcal R_7), \ \ \forall t\in\r.
\end{equation*}
We show that $\{B(t)\}_{t\in\r}$ is the desired family.\medskip

(i)\ Obviously, for every $t\in\r$, $B(t)$ is closed in $\h_t$ and
  \begin{equation*}
B(t)\subset \mathbb B_t\left(\mathcal R_3\right)  \cap \mathbb B^1_t(\mathcal R_7),
\end{equation*}
that is, conclusion \eqref{5.1} is valid, with $\mathcal R_0= \mathcal R_3> R_1$ and $\mathcal R=\mathcal R_7$.

(ii)\ It follows from formulas \eqref{5.9}, \eqref{5.12} and \eqref{5.15} that
$\mathbb B^1_t(\mathcal R_2)\subset B(t)$ holds for all $t\in\r$. Then  we infer from estimates \eqref{5.8} that
\begin{equation*}
\begin{split}
\mathrm{dist}_{\h_t}\left(U(t, \tau)\mathbb B_\tau (R_1), B(t)\right)
&\leq \mathrm{dist}_{\h_t}\left(U(t, \tau)\mathbb B_\tau (R_1), \mathbb B^1_t(\mathcal R_2)\right)\\
&\leq \mathcal Q(R_1) e^{-\omega\theta (t-\tau)}, \ \ \forall \tau\leq t-e_1,
\end{split}
\end{equation*}
that is, formula \eqref{5.2} holds, with   $\kappa=\omega \theta$ and $\tau_1=e_1$.

(iii)\ Taking $T_1=\max\{e_2, e_3, e_4\}$ and making use of formulas \eqref{5.10}, \eqref{5.13} and \eqref{5.16} yield
\begin{align*}
&U(t, \tau)B(\tau)\subset U(t, \tau)\mathbb B_\tau (\mathcal R_3)\subset \mathbb B_t (\mathcal R_3),\\
& U(t, \tau)B(\tau)\subset U(t, \tau)\left[\mathbb B_\tau (\mathcal R_3)\cap
\mathbb B^{1/3}_\tau \left(\mathcal R_5\right)\right]\subset \mathbb B^{1/3}_t\left(\mathcal R_5\right),\\
& U(t, \tau)B(\tau)\subset U(t, \tau)\left[\mathbb B^{1/3}_\tau (\mathcal R_5)\cap \mathbb B^1_\tau \left(\mathcal R_7\right)\right]\subset \mathbb B^1_t\left(\mathcal R_7\right), \ \ \forall \tau\leq t-T_1,\  t\in\r.
\end{align*}
  Therefore,
\begin{equation*}
U(t, \tau)B(\tau)\subset \mathbb B_t\left(\mathcal R_3\right) \cap \mathbb B^{1/3}_t(\mathcal R_5) \cap \mathbb B^1_t(\mathcal R_7)=B(t), \ \ \forall \tau\leq t-T_1, \ t\in\r.
\end{equation*}
This completes the proof.
\end{proof}
\bigskip

\begin{proof}[\textbf{Proof of Theorem \ref{eaexist}}] It follows from Lemma \ref{52} that the family $\{B(t)\}_{t\in\r}$ is uniformly bounded in $\{\h_t\}_{t\in\r}$, $B(t)$ is closed in $\h_t$ for each $t\in\r$, and there exists a positive constant $T>T_1$ such that $\eta^2= C e^{-\kappa T}<\frac14$ and
\begin{equation*}
U(t, t-\tau)B(t-\tau)\subset B(t), \ \ \forall t\in\r, \ \tau\geq T,
\end{equation*}
where $T_1$ is as shown in  Lemma \ref{52}. It follows from formula \eqref{2.6} that for any $t\in\r$,
\begin{equation}\label{5.17}
\|U(t, t-\tau)z_1- U(t, t-\tau)z_1\|_{\h_t}\leq L_1 \|z_1-z_2\|_{\h_{t-\tau}}, \ \ \forall z_1, z_2\in \mathbb B_t(\mathcal R_0),\ \tau\in [0, T],
\end{equation}
where the positive constant $L_1$ depends only on $\mathcal R_0$ and $T$.

Define the   space
\begin{equation*}
Z=\left\{u\in L^2\left(0, T; H^1\right)\ |\ \partial_t u\in L^2\left(0, T; H\right)\right\}
\end{equation*}
equipped with the  norm
\begin{equation*}
\|u\|_Z=\|\left(u, \partial_t u\right)\|_{L^2\left(0, T; H^1\times H\right)}.
\end{equation*}
Obviously, $Z$ is a Banach space. And the functional
\[ n_Z(u)=\mathcal Q\left(\mathcal R_0+T\right)\|u\|_{L^2\left(0, T; H\right)}\]
 is a compact semi-norm on $Z$ (cf. \cite{Simon}). For any given $t\in\r$, we define the mapping
\begin{equation*}
K_t: B(t-T)\rightarrow Z, \ \ K_t z=u(\cdot+t-T), \ \ \forall z\in B(t-T),
\end{equation*}
where $u(\cdot+t-T)$ means $u(s+t-T), s\in [0, T]$, and
\begin{equation*}
\left(u(s+t-T), \partial_t u(s+t-T), \eta^{s+t-T}\right)=U(s+t-T, t-T)z.
\end{equation*}
 Lemma \ref{412} shows that
\begin{equation*}
\|U(t, t-T)z_1- U(t, t-T)z_2\|_{\h_t}\leq \eta \|z_1-z_2\|_{\h_{t-T}}+n_Z\left(K_t z_1-K_t z_2\right),
\end{equation*}
and we infer from formulas \eqref{2.6} and  \eqref{5.1} that
\begin{equation*}
\begin{split}
\|K_t z_1-K_t z_2\|_Z^2
\leq\ & \int_0^T \|U(s+t-T, t-T)z_1-U(s+t-T, t-T)z_2\|^2_{\h_{s-t+T}} \d s\\
\leq\ & e^{\mathcal Q(\mathcal R_0) T}\|z_1-z_2\|^2_{\h_{t-T}},\ \ \forall z_1, z_2\in B(t-T), \ \ t\in\r.
\end{split}
\end{equation*}
 Thus   the family $\b=\{B(t)\}_{t\in\r}$ satisfies conditions  $(H_1)$-$(H_3)$ of Theorem \ref{ea}.

Moreover, by  Lemma \ref{45}, $\{\mathbb B_t(R_1)\}_{t\in\r}$ is a uniformly  time-dependent  absorbing set of the process  $U(t, \tau)$. And formulas \eqref{5.1}-\eqref{5.2} and \eqref{5.17} show that  $\{\mathbb B_t(R_1)\}_{t\in\r}$ satisfies the conditions of Corollary \ref{ea2}. Therefore,  the process  $U(t, \tau)$ has a time-dependent exponential attractor $\E=\{E(t)\}_{t\in\r}$, with $E(t)\subset B(t)\subset \mathbb B_t^1(\mathcal R)$ for each  $t\in\r$.
\end{proof}
\begin {thebibliography}{90} {\footnotesize

\bibitem{Chepyzhov} V. V. Chepyzhov, M. Conti, V. Pata, A minimal approach to the theory of global attractor, Discrete Contin. Dyn. Syst., 32 (2012)  2079-2088.

\bibitem{Chueshov2004}
I. Chueshov, I. Lasiecka, Attractors for second order evolution equations, J. Dynam. Diff.
Eqs., 16 (2004)  469-512.

\bibitem{Chueshov2008} I. Chueshov, I. Lasiecka,  Long-time behavior of second order evolution equations with nonlinear damping,  Memoirs of AMS 912, Amer. Math. Soc. Providence, 2008.

\bibitem{Chueshov2015} I. Chueshov, Dynamics of Quasi-Stable Dissipative Systems, Springer, New York, 2015.

\bibitem{Pata1-3} R. M. Christensen,  Theory of viscoelasticity: an introduction,  Academic Press, New York, 1982.

\bibitem{Pata} M. Conti, V. Pata, R. Temam, Attractors for the processes on time-dependent spaces. Application to wave equations,  J. Differential Equations, 255 (2013) 1254-1277.

\bibitem{Pata1}  M. Conti,  V. Danese,  C. Giorgi,  V. Pata,  A model of viscoelasticity with time-dependent memory kernels, Amer J Math.,   140(2) (2018) 349-389.

\bibitem{Pata2} M. Conti,   V. Danese,  V. Pata, Viscoelasticity with time-dependent memory kernels, II: asymptotical behavior of solutions, Amer J Math.,   140(6) (2018) 1687-1729.

\bibitem{Danese} V.  Danese, P. G. Geredeli, V. Pata, Exponential attractors for abstract equations with memory and applications to viscoelasticity, Discrete Contin. Dyn. Syst.,   35(7) (2015) 2881-2904.
													
 \bibitem{Pata1-8} C. M. Dafermos, Asymptotic stability in viscoelasticity, Arch. Rational Mech. Anal., 37 (1970)  297-308.

\bibitem{Pata1-9} C. M. Dafermos, {\it Contraction semigroups and trend to equilibrium in continuum mechanics},   In ``Applications of Methods of Functional Analysis to Problems in Mechanics" ( P. Germain and B. Nayroles, Eds.), pp.  295-306,   Lecture Notes in Mathematics 503, Springer-Verlag, Berlin-New York,  1976.

\bibitem{Pata3} F. Dell'Oro, V. Pata, Long-term analysis of strongly damped nonlinear wave equations, Nonlinearity,   24 (2011) 3413-3435.

 \bibitem{Eden}A. Eden, C. Foias, B. Nicolaenko, R. Temam, Exponential attractors for dissipative evolution equations, Masson, Paris, 1994.

 \bibitem{Efendiev1} M. Efendiev, A. Miranville, S. Zelik, Exponential attractors for a nonlinear reaction-diffusion system in $\r^3$, C. R. Acad. Sci. Paris S\'{e}r. I Math., 330 (2000)  713-718.

 \bibitem{Efendiev2} M. Efendiev, A. Miranville, S. Zelik, Exponential attractors and finite-dimensional reduction for nonautonomous dynamical systems, Proc. Roy. Soc. Edinburgh Sect.,  A 13 (2005)  703-730.

\bibitem{Pata1-16} M. Fabrizio,  B. Lazzari, On the existence and asymptotic stability of solutions for linear viscoelastic solids, Arch. Rational Mech. Anal., 116 (1991)  139-152.

   \bibitem{Pata1-17} M. Fabrizio,  A. Morro,  Mathematical problems in linear viscoelasticity, SIAM Studies Appl. Math. 12,  Philadelphia, PA,  1992.

   \bibitem{Pata1-19} C. Giorgi,   B. Lazzari, On the stability for linear viscoelastic solids,  Quart. Appl. Math., 55 (1997)  659-675.

     \bibitem{Pata1-21}  V. K.  Kalantarov,  Attractors for some nonlinear problems of mathematical physics,  J. Soviet Math., 40 (1988)  619-622.

      \bibitem{Pata1-22}  Z.  Liu, S. Zheng, On the exponential stability of linear viscoelasticity and thermoviscoelasticity,  Quart. Appl. Math., 54 (1996)  21-31.

      \bibitem{Miranville} A. Miranville, S. Zelik, Attractors for dissipative partial differential equations in bounded and unbounded domains, in ``Handbook of Differential Equations: Evolutionary Equations", Vol. 4 (C. M. Dafermos and M. Pokorny, Eds.), Elsevier, Amsterdam, 2008.

      \bibitem{Pata1-24}   J. E. Mu\~{n}oz Rivera,  Asymptotic behaviour in linear viscoelasticity,   Quart. Appl. Math., 52 (1994)  629-648.

\bibitem{Di}   F. Di Plinio, G. S.Duane,   R. Temam, Time dependent attractor for the oscillon equation,
Discrete Contin. Dyn. Syst., 29 (2011)  141-167.

   \bibitem{Pata1-28}  M. Renardy,  W. J. Hrusa,  J. A. Nohel, Mathematical problems in viscoelasticity, Longman Scientific \& Technical, Harlow John Wiley \& Sons, Inc.,  New York, 1987.

\bibitem{Simon} J. Simon,   Compact sets in the space $L^p(0,T;B)$,  Ann. Mat. Pura Appl.,  146 (1986) 65-96.
\bibitem{Y-Ly} Z. J. Yang, Y. N. Li, Criteria on the existence and stability of pullback exponential attractors and their application to non-autonomous Kirchhoff wave models, Discrete Contin. Dyn. Syst.,  38 (2018) 2629-2653.

}

\end{thebibliography}

\end{document}